\newtheorem{Theorem}{Theorem}
\newtheorem{Lemma}{Lemma}
\newtheorem{Corollary}{Corollary}
\newtheorem{Proposition}{Proposition}
\newcounter{rownumber}
\newcommand{\rownumber}{\stepcounter{rownumber}\arabic{rownumber}}
\titleformat{\section}{\large\bfseries}{\thesection.}{0.5em}{}
\titleformat{\subsection}{\normalsize\bfseries}{\thesubsection.}{0.5em}{}
\title{\textbf{Factorizations in Geometric Lattices}}
\author{
  Alex Aguila\thanks{Miami Dade College, Padron Campus, Mathematics Department} \and 
  Elvis Cabrera \and 
  Jyrko Correa-Morris\thanks{Corresponding author: \texttt{jcorrea7@mdc.edu}}
}
\date{}
\begin{document}

\maketitle

\vspace{-1em}
\begin{center}
\textbf{\large Abstract}
\end{center}
\vspace{0.75em}  
\begin{center}
\begin{minipage}{0.85\linewidth}
\small
This article investigates atomic decompositions in geometric lattices isomorphic to the partition lattice $\Pi(X)$ of a finite set $X$, a fundamental structure in lattice theory and combinatorics. We explore the role of atomicity in these lattices, building on concepts introduced by D.D. Anderson, D.F. Anderson, and M. Zafrullah within the context of factorization theory in commutative algebra. As part of the study, we first examine the main characteristics of the function $\mathfrak{N}\colon \Pi(X) \rightarrow \mathbb{N}$, which assigns to each partition $\pi$ the number of minimal atomic decompositions of $\pi$. We then consider a distinguished subset of atoms,  $\mathcal{R}$, referred to as the set of red atoms, and derive a recursive formula for $\pmb{\pi}(X, j, s, \mathcal{R})$, which enumerates the rank-$j$ partitions expressible as the join of exactly $s$ red atoms.
\end{minipage}
\end{center}
\vspace{1em}

\noindent\textbf{Keywords: }geometric lattices; lattice of partitions; factorization theory; minimal join decompositions; recursive enumeration; incidence geometry

	\section{Introduction} 
	
	The concept of atomic decomposition has been of great interest in both philosophy and physics for centuries, whose genesis presumably goes back to Leucippus and Democritus, 400 BC. During the last decades, different formal notions of atomicity have been formalized and studied in several areas of mathematics. The question of the existence of irredundant atomic decompositions in lattices with certain additional structures has been investigated since mid-past century. In this regard, P. Crawley introduced a decomposition theory for nonsemimodular lattices \cite{PCrawley1} and collaborated with R.P. Dilworth in the development of a decomposition theory for lattices without chain conditions \cite{RPDilworth1}. These contributions found further development in the work of M. Enre \cite{MErne1}, and, more recently, in the contributions of T. Albu \cite{TAlbu1}, which are inspired in applications to Grothendieck categories and torsion theories. A compilation of Dilworth's theorems is presented in \cite{RPDilworth2}. In the context of commutative monoids and integral domains  atomic decompositions, also referred to as factorizations, were considered in the sixties by L. Carlitz [2] and P. Cohn [3], in the seventies by A. Grams [6] and A. Zaks [10], and in the eighties by J. L. Steffan [7] and R. Valenza [8]. In 1990, D. D. Anderson, D. F. Anderson, and M. Zafrullah provided the first systematic investigation of atomic decompositions in the context of integral domains [1]. From that point on, atomic decompositions have been systematically studied by many authors under the umbrella of factorization theory (see [4, 5] and references therein).
	The reach of atomic decompositions has expanded significantly, influencing core developments in mathematics and computer science. In this regard, there are several open problems in algebra about the atomicity of certain classes of monoids  \cite{Atomicity1,Atomicity2,Atomicity3}. In Probability, Mishura et. al. investigated the atomic decompositions and inequalities for vector-valued discrete-time martingales \cite{YSMishura}. Along similar lines, X. Zhang investigated the atomic decompositions of Banach lattice-valued martingales and used them to study the relation of the martingale spaces \cite{XZhang1}. In the domain of ontologies, \cite{ChVescovo1} \cite{MHorridge1} used atomic decompositions to accurately represent the family of all locality-based modules of a given ontology.

	This paper delves into atomic decompositions within the context of geometric lattices. To the best of our knowledge, no existing work comprehensively addresses this topic from the perspective presented here. When analyzing finite geometries, two key aspects must be considered: \emph{incidence}, which deals with inclusion relationships between geometric objects (e.g., ``a point lies on a line"), and \emph{measurement}, which pertains to properties such as length, area, and angles. By focusing solely on incidence, we can establish fundamental properties that a finite geometry should satisfy: 1. There exists a base set of points, say \(\Omega\), and the geometric elements (e.g., points, lines, planes) are subsets of this base set. 2. Every point in \(\Omega\) is an element of the geometry, and \(\Omega\) itself is also considered an element of the geometry. 3. The intersection of any geometric elements is also an element of the geometry. 4. A rank or dimension function exists, which captures the hierarchical structure of the geometric elements. If we order these geometric elements by set inclusion, the resulting structure forms a lattice, where the points of the geometry correspond to the atoms of this lattice. This motivates the definition of a \emph{finite geometric lattice} as an algebraic lattice $(L, \preceq, \vee,\wedge)$, where $\preceq$ is the partial order relation, and $\vee$ and $\wedge$ denote the join operator and the meet operator, respectively, which is \emph{semimodular} (i.e., $a\wedge b \sqsubset a$ implies $b\sqsubset a\vee b$, where $\sqsubset$ stands for the covering relation (i.e., $c \sqsubset d$ if and only if $c\preceq d$ and for all $y\in L$, $c\preceq y\preceq d$ implies that either $y=c$ or $y=d$)) and \emph{atomistic} (i.e., every element different from the minimum element of the lattice can be expressed as the join of atoms (i.e., join irreducible elements of $L$)). To highlight the significance of geometric lattices in both algebraic and geometric contexts, it is worth noting two well-established facts: a geometric lattice is isomorphic to (1) the lattice of ideals of a finite, atomistic, semimodular, and principally chain lattice; and (2) the lattice of flats in a simple matroid.
	
	A \emph{finite matroid} is a pair $M=(S,\mathcal{I})$ where $S$ is a finite set and $\mathcal{I}\neq\emptyset$ is a subset of $2^S:=\{H\,:\, H\subseteq S\}$ such that (1) if $J\in\mathcal{I}$ and $J'\subseteq J$, then $J'\in\mathcal{I}$; and (2) for every $H\subseteq S$, the maximal elements of $2^{H}\cap \mathcal{I}$ have the same cardinality. In a matroid $M$, the rank of a subset $H\subseteq S$, denoted by $\text{rk}(H)$, is defined to be the maximum cardinality of those elements of $\mathcal{I}$ that are contained in $H$, i.e., $\text{rk}(H)=\max\{|J|\,:\, J\in \mathcal{I}\}$. A $k$-flat of $M$ is a maximal subset of rank $k$. The set of all flats of $M$ ordered by inclusion is known to be geometric lattice, which we shall denote by $L(M)$. $M$ is said to be simple if there is no $s\in S$ satisfying $\text{rk}(\{s\})=0$.
	
	\begin{Theorem} (\cite{StanleyHA}) Let $L$ be a finite lattice. $L$ is geometric if and only if $L$ is isomorphic to $L(M)$ for some simple matroid $M$.
	\end{Theorem}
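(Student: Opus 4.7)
The plan is to prove the two implications separately. For the ``if'' direction, suppose $M = (S, \mathcal{I})$ is a simple matroid and set $L = L(M)$. I would first show $L$ is atomistic: since $M$ is simple, each singleton $\{s\} \subseteq S$ has rank $1$, and the rank-$1$ flats are precisely the atoms of $L$. Because every flat equals the closure of the points it contains, every element of $L$ is a join of atoms. For semimodularity, I would invoke the submodularity of the matroid rank function, $\operatorname{rk}(A \vee B) + \operatorname{rk}(A \wedge B) \leq \operatorname{rk}(A) + \operatorname{rk}(B)$, together with the fact that in $L(M)$ the covering relation coincides with a rank increment of $1$. A short calculation then shows that $a\wedge b \sqsubset a$ forces $b\sqsubset a\vee b$.

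For the ``only if'' direction, suppose $L$ is a finite geometric lattice. Let $S$ denote the set of atoms of $L$, and let $\operatorname{rk}_L$ be its height function, which is well defined because a semimodular lattice satisfies the Jordan--Dedekind chain condition. Define
\[
\mathcal{I} = \bigl\{\, H \subseteq S \,:\, \operatorname{rk}_L\bigl(\textstyle\bigvee H\bigr) = |H| \,\bigr\}.
\]
The key step is to check that $(S, \mathcal{I})$ is a matroid. Closure under subsets is automatic from semimodularity (any subset of an ``in general position'' family is still in general position). The augmentation property is where semimodularity genuinely enters: given $J, J' \in \mathcal{I}$ with $|J| < |J'|$, one has $\bigvee J' \not\preceq \bigvee J$, and I would use the covering-translation property guaranteed by semimodularity to produce an atom $s \in J'\setminus J$ with $s\not\preceq \bigvee J$, so that $J \cup \{s\} \in \mathcal{I}$.

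Once $M = (S,\mathcal{I})$ is shown to be a matroid, the isomorphism $\varphi\colon L \to L(M)$ is defined by $\varphi(x) = \{\, s \in S : s \preceq x\,\}$. Atomisticity gives $x = \bigvee \varphi(x)$, which forces $\varphi$ to be injective and its image to consist exactly of the flats of $M$; order-preservation in both directions then follows formally. Simplicity of $M$ is immediate because each atom $s$ has $\operatorname{rk}_L(s) = 1 \neq 0$. The main obstacle I anticipate is the augmentation axiom in the ``only if'' direction: extracting from the purely order-theoretic semimodular hypothesis the combinatorial exchange statement for independent atom-sets is the technically delicate step, whereas the remaining verifications reduce to routine bookkeeping with heights and covers.
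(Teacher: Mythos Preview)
The paper does not prove this theorem at all: it is stated with a citation to Stanley's lecture notes and used as background, so there is no ``paper's own proof'' to compare against. Your plan is the standard textbook argument and is essentially correct---both the atomistic/semimodular verification for $L(M)$ and the construction of the matroid on the atom set via $\mathcal{I}=\{H\subseteq S:\operatorname{rk}_L(\bigvee H)=|H|\}$ are exactly how this equivalence is usually established (and is how Stanley does it).

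One small comment on execution: in the augmentation step you say ``$\bigvee J' \not\preceq \bigvee J$'' and then invoke semimodularity to find an atom $s\in J'\setminus J$ with $s\not\preceq \bigvee J$. The existence of such an $s$ does not require semimodularity---it follows immediately from $\operatorname{rk}_L(\bigvee J')>|J|\geq \operatorname{rk}_L(\bigvee J)$, since if every $s\in J'$ satisfied $s\preceq \bigvee J$ then $\bigvee J'\preceq \bigvee J$. Where semimodularity is actually used is in the next breath: once you have $s\not\preceq \bigvee J$, the covering property gives $\operatorname{rk}_L\bigl((\bigvee J)\vee s\bigr)=|J|+1$, hence $J\cup\{s\}\in\mathcal{I}$. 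Likewise, the hereditary axiom is not quite ``automatic'': you need the observation that adjoining atoms one at a time raises the height by at most one (again semimodularity), so a drop in rank on a subset would propagate to a drop on the whole set. These are minor sharpenings of attribution rather than gaps.
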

	
	An archetypal example of a geometric lattice is the set of all partitions of a finite set \(X\) with \(n\) elements, ordered by partition refinement. 
	
	A partition $\pi$ of a finite set $X$ with $n\geq 1$ elements is a collection of disjoint nonempty subsets of $X$ whose union is $X$. That is, $\pi = \{ \textsc{b}_{1}, \textsc{b}_{2},..., \textsc{b}_{k}\}$, with $\textsc{b}_{i} \neq \emptyset$, $\textsc{b}_{i} \cap \textsc{b}_{j} = \emptyset$ for $i \neq j$, and $\underset{i = 1}{\overset{k}{\cup}} \textsc{b}_{i} = X$. The set of all partitions of the set $X$ is endowed with the refinement of partitions. The partition $\pi$ is said to refine the partition $\pi'$ if and only if every block of $\pi'$ is the union of some blocks of $\pi$ (in particular, every block of $\pi$ is contained in some block of $\pi'$). The notation $\pi\preceq\pi'$ means $\pi$ refines $\pi'$, which is also read as $\pi$ is finer than $\pi'$, and as $\pi'$ is coarser than $\pi$. If $\pi\preceq\pi'$, and for every partition $\hat{\pi}$, $\pi\preceq\hat{\pi}\preceq \pi'$ implies that either $\hat{\pi}=\pi$ or $\hat{\pi}=\pi'$, then $\pi'$ is said to cover the partition $\pi$. Henceforth, we denote by $\Pi(X)$ the space of all partitions of $X$. We shall also use the notation $x\pi x'$ to mean that the elements $x,x'\in X$ lie in the block of the partition $\pi\in\Pi(X)$.
	
	The operations with partitions are another important ingredient of $\Pi(X)$, which endow this set with a rich structure. For any two partitions $\pi$ and $\pi'$ in $\Pi(X)$, there can always be found partitions that refine both. The coarsest partition that satisfies this property is denoted by $\pi\wedge \pi'$ and is called the \emph{meet} of $\pi$ and $\pi'$. The blocks of $\pi\wedge\pi'$ are all possible nonempty intersections of a block of $\pi$ and a block of $\pi'$. Similarly, 
	for any two partitions $\pi$ and $\pi'$ there can always be found partitions that are refined by both. The finest partition that satisfies this property is denoted by $\pi\vee \pi'$ and is called the \emph{join} of $\pi$ and $\pi'$. Two elements $x,x'\in X$ are placed in the same block of $\pi\vee\pi'$ if and only if there is a sequence $x=x_{i_1}, x_{i_2}, \ldots, x_{i_k}=x'$ such that, for all $j\in\{1,2,\ldots,k-1\}$, $x_{i_j}$ and $x_{i_{j+1}}$ are either placed in the same block of $\pi$ or in the same block of $\pi'$. Thus, the blocks of $\pi\vee\pi'$ are the subsets of $X$ that can be expressed both as unions of blocks of $\pi$ and as unions of blocks of $\pi'$, and are minimal with respect to this property. These operations convert $\Pi(X)$ into a lattice. Moreover, it is well-known that for any two partitions $\pi$ and $\pi'$, if $\pi$ covers $\pi\wedge\pi'$, then $\pi\vee\pi'$ covers $\pi'$, which makes $\Pi(X)$ an upper semimodular lattice.
	
	  There are two distinguished elements in the lattice \( \Pi(X) \): the finest partition, denoted by \( \mathrm{m}_X \), in which all blocks are singletons, and the coarsest partition, denoted by \( \mathrm{g}_X \), consisting of a single block equal to \( X \). These serve as the neutral elements for the meet and join operations, respectively.
	 
	 Moreover, \( \Pi(X) \) is a \emph{ranked lattice}, with rank function defined by
	 \(\operatorname{rank}(\pi) = n - |\pi|\),
	 where \( |\pi| \) is the number of blocks in the partition \( \pi \), and \( n = |X| \). This rank function measures the distance from the finest partition, \( \mathrm{m}_X \), and organizes \( \Pi(X) \) into layers based on the number of blocks.
	 
	 Each covering relation corresponds to merging exactly two blocks, decreasing the number of blocks by one and increasing the rank by one. Every maximal chain has length \( n - 1 \) and the rank increases uniformly along chains.
	
 Furthermore, the partitions of $X$ which cannot be but trivially decomposed as the join of partitions of $X$ are called the \emph{atoms} of $\Pi(X)$. A partition $\pi$ is an atom if all its blocks are singleton except for exactly one, which contains exactly two elements. From now on, we shall denote by $\pi_{xx'}$ the atom  whose only non-singleton block is $\{x, x'\}$,  by $\mathcal{A}$ the set of all atoms of $\Pi(X)$, and by $\mathcal{A}(\pi)$ the set of those atoms of $\Pi(X)$ that refine the partition $\pi$. It is straightforward to note that every partition $\pi$ is the join of all those atoms in $\mathcal{A}(\pi)$. Thus, $\Pi(X)$ is also atomistic, and therefore a geometric lattice.
	
	Several characterizations have been provided for partition lattices, including those by Ore \cite{Ore} and Sasaki and Fujiwara \cite{Sasaki}. Here, we have chosen to include a characterization that somehow encompasses the previous ones: 
	
	\begin{Theorem} (J. R. Stonesifer and K. P. Bogart \cite{Stonesifer})\label{JRSKPB} Let $L$ be a geometric lattice with a modular copoint $m$ such that, for every point $p$, the interval $[p,1]$ is isomorphic to $\Pi_{n-1}$. Then, for $n\geq5$, $L$ is isomorphic to $\Pi(X)$.
	\end{Theorem}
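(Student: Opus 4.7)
The plan is to construct an $n$-element set $X$ explicitly and exhibit an isomorphism $\Phi\colon L\to \Pi(X)$ whose action on atoms corresponds to the natural identification of atoms of $\Pi(X)$ with two-element subsets of $X$. The modular copoint $m$ is the key structural piece: since $[p,1]\cong \Pi_{n-1}$ has rank $n-2$ and $p$ has rank $1$, the lattice $L$ has rank $n-1$ and $m$ has rank $n-2$.

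First I would sort the atoms of $L$ into two classes: the \emph{internal} atoms (those $\preceq m$) and the \emph{external} atoms (those not $\preceq m$, which by the modular rank identity satisfy $p\vee m=1$ and $p\wedge m=\hat{0}$). Modularity of $m$ then supplies, for each external atom $p$, a lattice isomorphism $[\hat{0},m]\to [p,1]$ given by $x\mapsto x\vee p$ with inverse $y\mapsto y\wedge m$; combined with the hypothesis $[p,1]\cong \Pi_{n-1}$ this yields $[\hat{0},m]\cong \Pi_{n-1}$. I would then fix an $(n-1)$-element set $Y$ together with an isomorphism $[\hat{0},m]\cong \Pi(Y)$, and set $X:=Y\cup\{\infty\}$ for a fresh symbol $\infty$.

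Next I would define $\Phi$ on atoms. The isomorphism $[\hat{0},m]\cong \Pi(Y)$ identifies the internal atoms of $L$ bijectively with pairs $\{y,z\}\in \binom{Y}{2}$, so I set $\Phi(q):=\pi_{yz}$ accordingly. For external atoms, fix a reference external atom $p_{0}$ and assign it some $y_{0}\in Y$. For any other external atom $p'$, the rank-$2$ element $r:=p_{0}\vee p'$ satisfies $\mathrm{rk}(r\wedge m)=1$ by the modular rank identity, so $q:=r\wedge m$ is an internal atom whose associated pair in $\binom{Y}{2}$ should contain $y_{0}$; I would set $\beta(p')$ to be the other member of that pair and define $\Phi(p'):=\pi_{\beta(p')\,\infty}$. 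Finally $\Phi$ is extended to all of $L$ by sending an element $x$ to the join of the $\Phi$-images of the atoms $\preceq x$.

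The hard part is proving that $\Phi$ is a well-defined lattice isomorphism, which splits into two tasks: (i) showing that $\beta$ is well-defined and is a bijection between the external atoms and $Y$, which requires that the pair corresponding to $r\wedge m$ really does contain $y_{0}$ for every external $p'\ne p_{0}$; and (ii) matching the full line structure of $L$ (the sets of atoms beneath each rank-$2$ element) with that of $\Pi(X)$, since once the collection of lines agrees the closure operator on atoms agrees and $\Phi$ becomes a lattice isomorphism. Both tasks reduce to checking that the interval isomorphisms $[p,1]\cong \Pi_{n-1}$ at different external atoms $p$ are mutually compatible when transported through the modular bijection $[\hat{0},m]\to [p,1]$, so I would transport the reference isomorphism at $p_{0}$ across the external atoms via the rank-$2$ lines $p_{0}\vee p'$ and verify that the resulting global identification of the lines of $L$ with pairs from $X$ coincides with the one coming from $\Pi(X)$. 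The assumption $n\ge 5$ enters precisely here: for smaller $n$ one finds sporadic geometric lattices (for instance $\Pi_{4}\cong L(M(K_{4}))$ carries exceptional symmetries, and certain rank-$3$ truncations admit several matroid structures with the same lines) that meet the interval hypothesis without being partition lattices, and the bound supplies enough interacting external atoms around $p_{0}$ to pin the combinatorics down uniquely.
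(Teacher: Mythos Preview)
The paper does not prove this theorem at all: it is quoted as a result of Stonesifer and Bogart \cite{Stonesifer} and used only as background motivation for focusing on partition lattices. There is therefore no ``paper's own proof'' to compare your proposal against; the authors simply state the theorem, explain the terminology (copoint, modular element), record a corollary for supersolvable geometric lattices, and move on.

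For what it is worth, your outline is a plausible sketch of the Stonesifer--Bogart argument: exploiting modularity of $m$ to get the transposition isomorphism $[\hat 0,m]\cong[p,1]\cong\Pi_{n-1}$, labeling internal atoms by pairs from an $(n-1)$-set $Y$, adjoining a point $\infty$ for the external atoms, and then checking that the line structure matches. The delicate steps you flag---well-definedness of $\beta$ and compatibility of the various interval isomorphisms---are exactly where the real work lies, and your sketch does not actually carry them out; but since the present paper makes no attempt at a proof either, that is not a discrepancy with the paper so much as an incomplete independent argument.
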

	
	In the scope of Theorem \ref{JRSKPB}, a copoint should be interpreted as an element $m\in L$ whose rank is one less than the rank of $L$. Additionally, $m$ is said to be a modular element if, for every $x\in L$, the condition $y\preceq x$ implies $y\vee(m\wedge x)= (y\vee m)\wedge x$, for every $y\in L$. Also, 1 stands for the maximum element of $L$. Here is an interesting corollary of Theorem \ref{JRSKPB} for supersolvable geometric lattices ---there exists a maximal chain $\Delta$ of $L$ such that, for every chain $K$ of $L$, the sublattice generated by $K$ and $\Delta$ is distributive.
	
	\begin{Corollary}
		If $L$ is a supersolvable geometric lattice such that every interval $[x,1]$ has a characteristic polynomial which equals the characteristic polynomial of a partition lattice, then $L$ is isomorphic to a partition lattice.
	\end{Corollary}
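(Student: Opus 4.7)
The plan is to deduce the corollary from Theorem \ref{JRSKPB} by induction on $n := \operatorname{rank}(L)+1$. Theorem \ref{JRSKPB} requires, for a geometric lattice of rank $\geq 4$, three ingredients: (i) a modular copoint, (ii) every upper interval $[p,\hat{1}]$ over an atom $p$ isomorphic to $\Pi_{n-1}$, and (iii) $n \geq 5$. Supersolvability will furnish (i), while the induction hypothesis together with the characteristic polynomial condition will furnish (ii).

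For the inductive step ($n \geq 5$), I would first use supersolvability to pick a maximal chain $\Delta\colon \hat{0}=x_0 \prec x_1 \prec \cdots \prec x_{n-1} = \hat{1}$ of modular elements of $L$; the element $x_{n-2}$ is then a modular copoint, giving condition (i). Fixing an atom $p$ and setting $M := [p,\hat{1}]$, standard results on supersolvable geometric lattices ensure $M$ is itself supersolvable (a modular maximal chain for $M$ is obtained by joining $\Delta$ with $p$ and discarding repeats). Moreover, every interval $[y,\hat{1}]$ inside $M$ coincides with the corresponding interval in $L$, so its characteristic polynomial equals that of some partition lattice. Thus $M$ satisfies the hypotheses of the corollary at strictly smaller rank, and by the induction hypothesis $M \cong \Pi_{n-1}$, giving condition (ii). Theorem \ref{JRSKPB} then yields $L \cong \Pi(X)$ for an $n$-element set $X$.

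The base cases $n \leq 4$ must be verified independently of Theorem \ref{JRSKPB}. For $n \leq 2$ the characteristic polynomial determines the number of atoms and hence $L$ up to isomorphism. For $n = 3$ (rank $2$) the characteristic polynomial $(t-1)(t-2)$ forces exactly three atoms in a rank-$2$ geometric lattice, which yields $\Pi_3$. For $n = 4$ (rank $3$) one combines the characteristic polynomial $(t-1)(t-2)(t-3)$, supersolvability, and the fact that every upper interval over an atom is (by the rank-$2$ case applied to $M$) isomorphic to $\Pi_3$, in order to single out $\Pi_4$ among the small catalogue of rank-$3$ supersolvable geometric lattices.

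The main obstacle I expect is the rank-$3$ base case, since Theorem \ref{JRSKPB} does not reach that rank and one must argue by direct classification rather than by a further inductive step. A secondary technical point worth documenting carefully is the descent of supersolvability from $L$ to $M$, which rests on the standard fact that joining a modular maximal chain of $L$ with an atom $p$ produces a chain of modular elements in the upper interval $[p,\hat{1}]$; this appears in Stanley's theory of supersolvable lattices and only needs to be cited.
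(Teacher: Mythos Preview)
The paper does not prove this corollary at all; it is merely stated as a consequence of Theorem~\ref{JRSKPB} and left without argument. Your inductive scheme---supersolvability supplies the modular copoint, the characteristic-polynomial hypothesis plus induction forces every $[p,\hat 1]\cong\Pi_{n-1}$, and then Stonesifer--Bogart closes the loop---is precisely the intended route, so there is nothing to compare against.

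One remark that tightens your sketch: the rank-$3$ base case you flag as the main obstacle need not be handled by an ad hoc classification. The paper also records Yoon's refinement (the theorem stated immediately after the corollary), which extends the Stonesifer--Bogart conclusion down to $n\ge 4$. Once you know that each $[p,\hat 1]$ over an atom is $\Pi_3$ (your rank-$2$ argument) and that a modular copoint exists (supersolvability), Yoon's version applies directly at $n=4$. With that substitution the only genuine base cases are ranks $\le 2$, which you already dispatch by counting atoms. Your secondary point---that supersolvability descends to upper intervals via $p\vee\Delta$---is standard (Stanley) and only needs a citation, exactly as you say.
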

	
	Young-jin Yoon rephrased Theorem \ref{JRSKPB} for combinatorial geometries \cite{Yoon}, thereby emphasizing the geometric relevance of this result. Given a geometry $G$, a subset $T$ of the points of $G$, $G/T$ stands for the contraction of $G$ by $T$ ---the geometry induced by the geometric lattice $[\text{cl}(T), 1]$ on the set of all flats in $G$ covering the closure of $T$, $\text{cl}(T)$. Moreover, for a geometry $G$, $L(G)$   denotes the geometric lattice all whose elements are the flats of $G$ with the set inclusion as the partial order.
	
	\begin{Theorem} (YJ. Yoon \cite{Yoon})
		If a geometry $G$ has a modular copoint and, for every point $p\in G$, $L(G/p)$ is isomorphic to $\Pi(X)$, then $L(G)$ is isomorphic to $L(G)$ for $n\geq4$.
	\end{Theorem}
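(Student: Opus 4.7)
The plan is to reduce Yoon's statement to Theorem \ref{JRSKPB} by transferring each hypothesis from the combinatorial geometry $G$ to its lattice of flats $L(G)$, and then invoking the Stonesifer--Bogart characterization directly.

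I would proceed in three steps. First, $L(G)$ is a geometric lattice by definition, so the ambient hypothesis of Theorem \ref{JRSKPB} is automatic. Second, the rank function of $G$ coincides with the rank function of $L(G)$, and modularity of an element is expressed entirely in lattice-theoretic terms; consequently a modular copoint of $G$ is a modular copoint of $L(G)$. Third, and most substantively, one has the canonical identification $L(G/p) \cong [p,1]$ inside $L(G)$: by the description of the contraction recalled just before the statement, $G/p$ is the geometry on the flats of $G$ covering $\mathrm{cl}(p)=p$, and its associated geometric lattice is precisely the upper interval $[p,1]$ with the inherited join and meet. Under this identification the hypothesis ``$L(G/p)\cong \Pi(X)$'' becomes exactly ``$[p,1]\cong \Pi(X)$'' for every atom $p\in L(G)$.

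Matching the indexing between the two theorems is then routine. If $|X|=n$, then $\Pi(X)$ has rank $n-1$; hence $[p,1]$ has rank $n-1$, $L(G)$ has rank $n$, and the interval hypothesis of Theorem \ref{JRSKPB}, stated there as ``$[p,1]\cong \Pi_{n-1}$'', is satisfied with Stonesifer--Bogart's ``$n-1$'' equal to our $|X|$. Yoon's bound $n\geq 4$ translates into Stonesifer--Bogart's bound $|X|+1\geq 5$, which holds. Theorem \ref{JRSKPB} then yields $L(G)\cong \Pi(X)$, which is the intended conclusion (the statement as typeset contains an evident typographical slip, $L(G)\cong L(G)$, and should read $L(G)\cong \Pi(X)$).

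The main obstacle I anticipate is not the logical reduction itself but the careful verification that the natural bijection $L(G/p)\to [p,1]$ preserves joins and meets, so that it is a genuine lattice isomorphism rather than a mere order-preserving bijection. This is a standard fact from matroid theory, but it is the only step where one actually uses the geometric (as opposed to purely lattice-theoretic) structure of $G$, and it must be cited or spelled out so that the passage from the hypothesis on $G$ to the hypothesis required by Theorem \ref{JRSKPB} is gap-free. Once that identification is in place, the proof is a one-line application of the Stonesifer--Bogart theorem.
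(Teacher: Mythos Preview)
The paper does not give its own proof of this theorem: it is stated with attribution to Yoon \cite{Yoon} as background material in the introduction, immediately after the remark that ``Young-jin Yoon rephrased Theorem~\ref{JRSKPB} for combinatorial geometries.'' There is therefore no proof in the paper to compare against.

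That said, your reduction is exactly the right one and matches the paper's framing of the result as a \emph{rephrasing} of Stonesifer--Bogart. The identification $L(G/p)\cong [p,1]$ is the standard fact that contraction by a flat corresponds to passing to the upper interval above that flat in the lattice of flats; once this is in hand, Yoon's hypotheses translate verbatim into those of Theorem~\ref{JRSKPB}, and the conclusion follows. Your caution about verifying that the bijection $L(G/p)\to [p,1]$ is a lattice isomorphism (not merely an order isomorphism) is well placed, though for finite lattices an order isomorphism is automatically a lattice isomorphism, so this step is lighter than you suggest. You also correctly flag the typographical slip in the conclusion (``$L(G)$ is isomorphic to $L(G)$''); the intended target is a partition lattice of the appropriate size.
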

	
	This paper focuses on geometric lattices that are isomorphic to a lattice of partitions, including the lattices of all equivalence relations on a finite set and the lattices of all factorizations of an integer $q$, ordered by inclusion. However, this work excludes other important examples of geometric lattices, such as the lattice of all subspaces of a vector space over a finite field.  
	
	An atomic decomposition of a partition $\pi$ is a subset $\mathfrak{a}_{\pi}$ of $\mathcal{A}(\pi)$ such that the join of the atoms in $\mathfrak{a}_{\pi}$ equals $\pi$. While $\mathcal{A}(\pi)$ itself forms an atomic decomposition of the partition $\pi$, it typically includes superfluous atoms. The objective is often to generate a given partition $\pi$ by using the fewest possible atoms. An atom $\pi_{xx'}$ in an atomic decomposition $\mathfrak{a}_{\pi}$ is considered redundant if $\mathfrak{a}_{\pi}-\{\pi_{xx'}\}$ remains an atomic decomposition of $\pi$. Thus, a decomposition is minimal if it contains no redundant atoms. If an atomic decomposition $\mathfrak{a}_{\pi}$ of the partition $\pi$ is not minimal, there exists an atom $\pi_{xx'}$ such that removing it, i.e., redefining $\mathfrak{a}_{\pi}:=\mathfrak{a}_{\pi}-\{\pi_{xx'}\}$,  still results in an atomic decomposition of $\pi$. This process of eliminating superfluous atoms can be repeated until a minimal decomposition is achieved. Since $\mathfrak{a}_{\pi}$ is finite, removing all redundant atoms requires only a finite number of steps. Consequently, every partition $\pi$ admits a minimal atomic decomposition. Henceforth, $M(\pi)$ denotes the set all whose elements are the minimal atomic decompositions of $\pi$.
	
	Let us illustrate this notions with a simple example. The partition $\pi=\{\{x_1,x_2,x_3,x_4\},\{x_5,x_6,x_7\}\}$ of $X=\{x_1,x_2,x_3,x_4,x_5,x_6,x_7\}$ admits the following atomic decompositions (among others):
	
	\begin{eqnarray}\label{decomp1}
	\pi&=& \pi_{x_1x_2}\vee \pi_{x_2x_3}\vee \pi_{x_3x_4}\vee \pi_{x_5x_6}\vee \pi_{x_6x_7},\nonumber\\
	\mathfrak{a}_{\pi} &=& \{\pi_{x_1x_2}, \pi_{x_2x_3}, \pi_{x_3x_4}, \pi_{x_5x_6}, \pi_{x_6x_7}\};
	\end{eqnarray}
		\begin{eqnarray}\label{decomp2}
	\pi&= &\pi_{x_1x_2}\vee \pi_{x_1x_3}\vee \pi_{x_1x_4}\vee \pi_{x_5x_6}\vee \pi_{x_6x_7},\nonumber\\
	\mathfrak{a}'_{\pi} &= &\{\pi_{x_1x_2}, \pi_{x_1x_3}, \pi_{x_1x_4}, \pi_{x_5x_6}, \pi_{x_6x_7}\};
	\end{eqnarray}
	and
	\begin{eqnarray}\label{decomp3}
	\pi&=& \pi_{x_1x_2}\vee \pi_{x_2x_3}\vee \pi_{x_3x_4}\vee \pi_{x_2x_4}\vee \pi_{x_5x_6}\vee \pi_{x_6x_7}\vee \pi_{x_5x_7},\nonumber\\
	\mathfrak{a}''_{\pi} &=& \{\pi_{x_1x_2}, \pi_{x_2x_3}, \pi_{x_3x_4},
	\pi_{x_2x_4},
	\pi_{x_5x_6}, \pi_{x_6x_7},
	\pi_{x_5x_7}\}.
	\end{eqnarray}
	
		\begin{figure}[h!]
		\centering
		\tikzset{every picture/.style={line width=0.75pt}} 
		\vspace{0.5cm}
		\hspace{-1.9cm}
		\begin{tikzpicture}[x=0.75pt,y=0.75pt,yscale=-1,xscale=1]
		
		\draw   (82.9,73.92) .. controls (82.9,60.18) and (94.04,49.03) .. (107.79,49.03) -- (548.94,49.03) .. controls (562.69,49.03) and (573.83,60.18) .. (573.83,73.92) -- (573.83,148.59) .. controls (573.83,162.34) and (562.69,173.48) .. (548.94,173.48) -- (107.79,173.48) .. controls (94.04,173.48) and (82.9,162.34) .. (82.9,148.59) -- cycle ;
		\draw [color={rgb, 255:red, 0; green, 0; blue, 0 }  ,draw opacity=1 ][fill={rgb, 255:red, 208; green, 2; blue, 27 }  ,fill opacity=1 ]   (116.5,96.49) -- (107.5,107) ;

		\draw [color={rgb, 255:red, 0; green, 0; blue, 0 }  ,draw opacity=1 ][fill={rgb, 255:red, 208; green, 2; blue, 27 }  ,fill opacity=1 ]   (117.5,125.49) -- (107.5,117) ;

		\draw    (246,49.03) -- (246.7,174.38) ;

		\draw    (413.4,49.03) -- (412.5,173.59) ;

		\draw [color={rgb, 255:red, 0; green, 0; blue, 0 }  ,draw opacity=1 ]   (187.83,106) -- (196.62,96.49) ;

		\draw [color={rgb, 255:red, 0; green, 0; blue, 0 }  ,draw opacity=1 ]   (216.81,111.66) -- (193.83,112) ;

		\draw   (89.17,112.56) .. controls (89.17,93.47) and (104.64,78) .. (123.73,78) .. controls (142.82,78) and (158.29,93.47) .. (158.29,112.56) .. controls (158.29,131.65) and (142.82,147.13) .. (123.73,147.13) .. controls (104.64,147.13) and (89.17,131.65) .. (89.17,112.56) -- cycle ;
		\draw   (170.17,111.56) .. controls (170.17,92.47) and (185.64,77) .. (204.73,77) .. controls (223.82,77) and (239.29,92.47) .. (239.29,111.56) .. controls (239.29,130.65) and (223.82,146.13) .. (204.73,146.13) .. controls (185.64,146.13) and (170.17,130.65) .. (170.17,111.56) -- cycle ;
		\draw   (253.17,111.56) .. controls (253.17,92.47) and (268.64,77) .. (287.73,77) .. controls (306.82,77) and (322.29,92.47) .. (322.29,111.56) .. controls (322.29,130.65) and (306.82,146.13) .. (287.73,146.13) .. controls (268.64,146.13) and (253.17,130.65) .. (253.17,111.56) -- cycle ;
		\draw   (334.17,111.56) .. controls (334.17,92.47) and (349.64,77) .. (368.73,77) .. controls (387.82,77) and (403.29,92.47) .. (403.29,111.56) .. controls (403.29,130.65) and (387.82,146.13) .. (368.73,146.13) .. controls (349.64,146.13) and (334.17,130.65) .. (334.17,111.56) -- cycle ;
		\draw   (419.17,111.56) .. controls (419.17,92.47) and (434.64,77) .. (453.73,77) .. controls (472.82,77) and (488.29,92.47) .. (488.29,111.56) .. controls (488.29,130.65) and (472.82,146.13) .. (453.73,146.13) .. controls (434.64,146.13) and (419.17,130.65) .. (419.17,111.56) -- cycle ;
		\draw   (500.17,111.56) .. controls (500.17,92.47) and (515.64,77) .. (534.73,77) .. controls (553.82,77) and (569.29,92.47) .. (569.29,111.56) .. controls (569.29,130.65) and (553.82,146.13) .. (534.73,146.13) .. controls (515.64,146.13) and (500.17,130.65) .. (500.17,111.56) -- cycle ;
		\draw [color={rgb, 255:red, 0; green, 0; blue, 0 }  ,draw opacity=1 ][fill={rgb, 255:red, 208; green, 2; blue, 27 }  ,fill opacity=1 ]   (139.5,115.49) -- (130.5,126) ;

		\draw [color={rgb, 255:red, 0; green, 0; blue, 0 }  ,draw opacity=1 ][fill={rgb, 255:red, 208; green, 2; blue, 27 }  ,fill opacity=1 ]   (280.5,96.49) -- (271.5,107) ;

		\draw [color={rgb, 255:red, 0; green, 0; blue, 0 }  ,draw opacity=1 ][fill={rgb, 255:red, 208; green, 2; blue, 27 }  ,fill opacity=1 ]   (295.83,98) -- (305.46,107.45) ;

		\draw [color={rgb, 255:red, 0; green, 0; blue, 0 }  ,draw opacity=1 ][fill={rgb, 255:red, 208; green, 2; blue, 27 }  ,fill opacity=1 ]   (287.5,98.49) -- (287.81,121.66) ;

		\draw [color={rgb, 255:red, 0; green, 0; blue, 0 }  ,draw opacity=1 ]   (518.83,106) -- (527.62,96.49) ;

		\draw [color={rgb, 255:red, 0; green, 0; blue, 0 }  ,draw opacity=1 ]   (550.62,106.49) -- (541.83,97) ;

		\draw [color={rgb, 255:red, 0; green, 0; blue, 0 }  ,draw opacity=1 ][fill={rgb, 255:red, 208; green, 2; blue, 27 }  ,fill opacity=1 ]   (446.5,96.49) -- (437.5,107) ;

		\draw [color={rgb, 255:red, 0; green, 0; blue, 0 }  ,draw opacity=1 ][fill={rgb, 255:red, 208; green, 2; blue, 27 }  ,fill opacity=1 ]   (447.5,125.49) -- (437.5,117) ;

		\draw [color={rgb, 255:red, 0; green, 0; blue, 0 }  ,draw opacity=1 ][fill={rgb, 255:red, 208; green, 2; blue, 27 }  ,fill opacity=1 ]   (438.83,112) -- (467.81,111.66) ;

		\draw [color={rgb, 255:red, 0; green, 0; blue, 0 }  ,draw opacity=1 ][fill={rgb, 255:red, 208; green, 2; blue, 27 }  ,fill opacity=1 ]   (469.5,115.49) -- (460.5,126) ;

		\draw [color={rgb, 255:red, 0; green, 0; blue, 0 }  ,draw opacity=1 ]   (351.83,106) -- (360.62,96.49) ;

		\draw [color={rgb, 255:red, 0; green, 0; blue, 0 }  ,draw opacity=1 ]   (380.81,111.66) -- (357.83,112) ;

		\draw [color={rgb, 255:red, 0; green, 0; blue, 0 }  ,draw opacity=1 ]   (546.81,111.66) -- (523.83,112) ;

		\draw (205,89) node  [font=\small,color={rgb, 255:red, 0; green, 0; blue, 0 }  ,opacity=1 ]  {$x_{5}$};
		\draw (185,109) node  [font=\small,color={rgb, 255:red, 0; green, 0; blue, 0 }  ,opacity=1 ]  {$x_{6}$};
		\draw (43,102) node    {${}$};
		\draw (100.79,109.83) node  [font=\small,color={rgb, 255:red, 0; green, 0; blue, 0 }  ,opacity=1 ]  {$x_{2}$};
		\draw (125.33,129.12) node  [font=\small,color={rgb, 255:red, 0; green, 0; blue, 0 }  ,opacity=1 ]  {$x_{3}$};
		\draw (124.96,88.9) node  [font=\small,color={rgb, 255:red, 0; green, 0; blue, 0 }  ,opacity=1 ]  {$x_{1}$};
		\draw (148.5,109.58) node  [font=\small,color={rgb, 255:red, 0; green, 0; blue, 0 }  ,opacity=1 ]  {$x_{4}$};
		\draw (264.79,109.83) node  [font=\small,color={rgb, 255:red, 0; green, 0; blue, 0 }  ,opacity=1 ]  {$x_{2}$};
		\draw (289.33,129.12) node  [font=\small,color={rgb, 255:red, 0; green, 0; blue, 0 }  ,opacity=1 ]  {$x_{3}$};
		\draw (288.96,88.9) node  [font=\small,color={rgb, 255:red, 0; green, 0; blue, 0 }  ,opacity=1 ]  {$x_{1}$};
		\draw (536,89) node  [font=\small,color={rgb, 255:red, 0; green, 0; blue, 0 }  ,opacity=1 ]  {$x_{5}$};
		\draw (516,109) node  [font=\small,color={rgb, 255:red, 0; green, 0; blue, 0 }  ,opacity=1 ]  {$x_{6}$};
		\draw (479,110) node  [font=\small,color={rgb, 255:red, 0; green, 0; blue, 0 }  ,opacity=1 ]  {$x_{4}$};
		\draw (430.79,109.83) node  [font=\small,color={rgb, 255:red, 0; green, 0; blue, 0 }  ,opacity=1 ]  {$x_{2}$};
		\draw (455.33,129.12) node  [font=\small,color={rgb, 255:red, 0; green, 0; blue, 0 }  ,opacity=1 ]  {$x_{3}$};
		\draw (454.96,88.9) node  [font=\small,color={rgb, 255:red, 0; green, 0; blue, 0 }  ,opacity=1 ]  {$x_{1}$};
		\draw (226,110) node  [font=\small,color={rgb, 255:red, 0; green, 0; blue, 0 }  ,opacity=1 ]  {$x_{7}$};
		\draw (369,89) node  [font=\small,color={rgb, 255:red, 0; green, 0; blue, 0 }  ,opacity=1 ]  {$x_{5}$};
		\draw (349,109) node  [font=\small,color={rgb, 255:red, 0; green, 0; blue, 0 }  ,opacity=1 ]  {$x_{6}$};
		\draw (390,110) node  [font=\small,color={rgb, 255:red, 0; green, 0; blue, 0 }  ,opacity=1 ]  {$x_{7}$};
		\draw (313.5,109.58) node  [font=\small,color={rgb, 255:red, 0; green, 0; blue, 0 }  ,opacity=1 ]  {$x_{4}$};
		\draw (560,109) node  [font=\small,color={rgb, 255:red, 0; green, 0; blue, 0 }  ,opacity=1 ]  {$x_{7}$};
		\draw (164,160.26) node   [align=left] {(a)};
		\draw (328,160.26) node   [align=left] {(b)};
		\draw (494,160.26) node   [align=left] {(c)};

		\end{tikzpicture}
		\caption{Decompositions of the partition $\pi$: (a) 	$\mathfrak{a}_{\pi}$, (b) 	$\mathfrak{a}'_{\pi}$, and (c) $	\mathfrak{a}''_{\pi}$.}\label{figura}
	\end{figure}
	
	Decompositions $\mathfrak{a}_{\pi}$ and $\mathfrak{a}'_{\pi}$ are minimal, while decomposition $\mathfrak{a}''_{\pi}$ is not. To show this, let us lean on the following graphical illustration of $\pi$, where for each atom in the decomposition, we placed a segment of line connecting the only two elements in its only non-singleton block.
	
	
	\tikzset{every picture/.style={line width=0.75pt}} 
	\vspace{0.5cm}
	\hspace{2.5cm}
	\begin{tikzpicture}[x=0.75pt,y=0.75pt,yscale=-1,xscale=1]
	
	\draw   (158.27,53.62) .. controls (158.27,40.59) and (168.84,30.02) .. (181.87,30.02) -- (361.69,30.02) .. controls (374.73,30.02) and (385.29,40.59) .. (385.29,53.62) -- (385.29,124.42) .. controls (385.29,137.46) and (374.73,148.02) .. (361.69,148.02) -- (181.87,148.02) .. controls (168.84,148.02) and (158.27,137.46) .. (158.27,124.42) -- cycle ;
	\draw   (183.27,90.51) .. controls (183.27,70.35) and (199.62,54) .. (219.78,54) .. controls (239.95,54) and (256.29,70.35) .. (256.29,90.51) .. controls (256.29,110.68) and (239.95,127.02) .. (219.78,127.02) .. controls (199.62,127.02) and (183.27,110.68) .. (183.27,90.51) -- cycle ;
	\draw   (293.27,90.51) .. controls (293.27,70.35) and (309.62,54) .. (329.78,54) .. controls (349.95,54) and (366.29,70.35) .. (366.29,90.51) .. controls (366.29,110.68) and (349.95,127.02) .. (329.78,127.02) .. controls (309.62,127.02) and (293.27,110.68) .. (293.27,90.51) -- cycle ;
	
	\draw (200.79,87.83) node  [font=\small,color={rgb, 255:red, 0; green, 0; blue, 0 }  ,opacity=1 ]  {$x_{2}$};
	\draw (221.33,113.12) node  [font=\small,color={rgb, 255:red, 0; green, 0; blue, 0 }  ,opacity=1 ]  {$x_{3}$};
	\draw (220.96,62.9) node  [font=\small,color={rgb, 255:red, 0; green, 0; blue, 0 }  ,opacity=1 ]  {$x_{1}$};
	\draw (241.5,88.58) node  [font=\small,color={rgb, 255:red, 0; green, 0; blue, 0 }  ,opacity=1 ]  {$x_{4}$};
	\draw (330.6,63.66) node  [font=\small,color={rgb, 255:red, 0; green, 0; blue, 0 }  ,opacity=1 ]  {$x_{5}$};
	\draw (347.68,93.1) node  [font=\small,color={rgb, 255:red, 0; green, 0; blue, 0 }  ,opacity=1 ]  {$x_{7}$};
	\draw (315.61,92.37) node  [font=\small,color={rgb, 255:red, 0; green, 0; blue, 0 }  ,opacity=1 ]  {$x_{6}$};

	\end{tikzpicture}
	
	Notice that, according to the definition of join, the atoms in any atomic decomposition of $\pi$ should ensure the existence of a sequence of line segments for any two elements in the same block of $\pi$. If the decomposition is minimal, then only one of such sequences exists for any two given elements lying in the same block of $\pi$.
	It can be straightforwardly seen now that all atoms in $\mathfrak{a}_{\pi}$ and $\mathfrak{a}'_{\pi}$ are needed, while in $\mathfrak{a}''_{\pi}$ we can remove one atom from among the two atoms $\pi_{x_2x_3}$,  $\pi_{x_2x_4}$, and $\pi_{x_3x_4}$, and one atom from among of the atoms $\pi_{x_5x_6}$, $\pi_{x_5x_7}$, and $\pi_{x_6x_7}$.
	
\subsection*{Contributions of the article}
	Firstly, this article investigates some basic properties of the atomic decomposition in the lattice of partitions, including those introduced by D.D. Anderson, D. F. Anderson and M. Zafrullah for the systematic study of commutative algebraic structures in factorization theory. Then, we focus on
	the key characteristic features of the function $\mathfrak{N}:\Pi(X)\rightarrow\mathbb{N}$, which maps each partition $\pi$ of $X$ into the number of minimal atomic decompositions of $\pi$. Secondly, assuming that a subset $\mathcal{R}$ of the set atoms $\mathcal{A}$ is given, referred to as the set of red atoms, we derive a recursive formula for $\bm{\pi}(X,j,s,\mathcal{R})$: the total number of partitions of $X$ lying at the $j$th level of $\Pi(X)$ that can be expressed as the join of exactly $s$ red atoms. (The term ``red atoms'' is used to emphasize that only a designated subset of atoms is considered, rather than the full set of atoms in the lattice.)
	
	From a geometrical perspective, atomic decompositions can be understood as the process by which higher-dimensional structures, such as lines and planes, are formed by combining the smallest ``building blocks" of the geometry, typically referred to as points. Emphasizing atomic decomposition in geometric lattices highlights the essence of constructing complex structures from simple, indivisible elements, reflecting both algebraic and geometric viewpoints.
	
To facilitate accessibility and understanding, Table \ref{tab:notation} compiles the main notations used throughout this article.
	
\begin{table}[h!]
	\centering
	\renewcommand{\arraystretch}{1.3}
	\begin{tabularx}{\textwidth}{|c|>{\raggedright\arraybackslash}p{4cm}|>{\raggedright\arraybackslash}X|}
		\hline
		\textbf{No.} & \textbf{Notation} & \textbf{Description} \\
		\hline
			\rownumber & \( \Pi(X) \) & Lattice of partitions of the finite set $X$. \\
		\rownumber & \( \mathcal{A} \) & Set of atoms of \(\Pi(X)\). \\
		\rownumber & \( \mathcal{A}(\pi) \) & Set of atoms that refine the partition \( \pi \). \\
			\rownumber & \( M(\pi) \) & Set of minimal atomic decompositions of the partition $\pi$. \\
		\rownumber & \( \mathfrak{N}(\pi) \) & Number of minimal atomic decompositions of \(\pi\). \\
			\rownumber & \( \mathcal{R} \) & Specified subset of atoms referred to as ``red atoms''. \\
		\rownumber & \( \Pi(X,\mathcal{J}) \) & Partitions expressible as \( \bigvee J \) for \( J \in \mathcal{J} \subseteq 2^{\mathcal{R}} \). \\
		\rownumber & \( \Pi(X,\mathcal{R}) \) & Set of partitions formed as joins of subsets of \(\mathcal{R}\). \\
		\rownumber & \( \Pi(X,*,s,\mathcal{R}) \) & Partitions expressible as joins of exactly \( s \) atoms from \( \mathcal{R} \). \\
		\rownumber & \( \Pi(X,j,s,\mathcal{R}) \) & Subset of \( \Pi(X,*,s,\mathcal{R}) \) consisting of those partitions of rank \( j \). \\
		\rownumber & \( \Pi(X,j,s,\mathcal{R})_{xx'} \) & Partitions \( \pi \in \Pi(X,j,s,\mathcal{R}) \) refined by the atom \( \pi_{xx'} \). \\
		\rownumber & \( \Pi(X,j,s,\mathcal{R})_{x|x'} \) & Partitions \( \pi \in \Pi(X,j,s,\mathcal{R}) \) that are not refined by the atom \( \pi_{xx'} \). \\
		\rownumber & \( \boldsymbol{\pi}(X,j,s,\mathcal{R}) \) & Cardinality of \( \Pi(X,j,s,\mathcal{R}) \). \\
		\hline
	\end{tabularx}
	\caption{Notation used for atomic decompositions in the partition lattice}
	\label{tab:notation}
\end{table}

	\section{Basic facts about atomic decompositions in $\Pi(X)$}
	
	Our first theorem enables us to leverage well-known results from graph theory to describe fundamental properties of the atomic decomposition of partitions.
	
	Along this paper we will use the maps:
	\begin{enumerate}
		\item the map that takes a partition $\pi\in\Pi(X)$ and returns the labeled non-directed graph $G_{\pi}=(X,E_{\pi})$ on $X$, where $\{x,x'\}\in E_{\pi}$ if and only if $\pi_{xx'}\in \mathcal{A}(\pi)$.
		\item the map that takes a labeled undirected graph $G=(X,E)$ on $X$ and returns the partition $\pi_G=\bigvee\{\pi_{xx'}\,:\, \{x,x'\}\in E\}$.
	\end{enumerate}
	
	\begin{Theorem}\label{Bijection} Let $\pi\in\Pi(X)$ be an arbitrary partition of $X$. Then:
		\begin{enumerate}
			\item There is a bijection between the set of all atomic decompositions of $\pi$ and the set of all spanning subgraphs $G$ of $G_{\pi}$ such that $\pi_G=\pi$.
			\item There is a bijection between the set of all minimal atomic decompositions of $\pi$ and the set of all spanning forests of $G_{\pi}$.
		\end{enumerate}		 	
	\end{Theorem}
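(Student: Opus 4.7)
The plan is to exploit the fact that the maps $\pi\mapsto G_{\pi}$ and $G\mapsto \pi_G$ translate atomic decompositions to graphs on $X$, with the join operation corresponding to taking connected components. The whole argument reduces to bookkeeping once this dictionary is set up.

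First, I would record the key preliminary observation that $G_{\pi}$ is a disjoint union of complete graphs, one on each block of $\pi$: by definition $\{x,x'\}\in E_{\pi}$ iff $\pi_{xx'}\in\mathcal{A}(\pi)$, i.e.\ iff $x$ and $x'$ lie in a common block of $\pi$. More generally, for any labeled graph $G=(X,E)$, the combinatorial description of the join given in the introduction shows that two elements lie in the same block of $\pi_G$ iff they are connected in $G$; hence the blocks of $\pi_G$ are exactly the connected components of $G$.

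For part (1), I would define the maps explicitly. Given an atomic decomposition $\mathfrak{a}_{\pi}\subseteq \mathcal{A}(\pi)$, assign to it the spanning subgraph $G(\mathfrak{a}_{\pi})=(X,\{\{x,x'\}:\pi_{xx'}\in\mathfrak{a}_{\pi}\})$ of $G_{\pi}$; by the observation above, $\pi_{G(\mathfrak{a}_{\pi})}=\bigvee \mathfrak{a}_{\pi}=\pi$. Conversely, a spanning subgraph $G=(X,E)$ of $G_{\pi}$ with $\pi_G=\pi$ yields the set $\{\pi_{xx'}:\{x,x'\}\in E\}\subseteq\mathcal{A}(\pi)$, whose join equals $\pi_G=\pi$, so it is an atomic decomposition. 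These two maps are mutual inverses by construction.

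For part (2), I would restrict the bijection of part (1) and show that minimality of $\mathfrak{a}_{\pi}$ corresponds exactly to acyclicity of $G(\mathfrak{a}_{\pi})$, i.e.\ to $G(\mathfrak{a}_{\pi})$ being a spanning forest of $G_{\pi}$ (whose components automatically coincide with the blocks of $\pi$, since those are the components of $G_{\pi}$ itself). In one direction, if $G(\mathfrak{a}_{\pi})$ contains a cycle and $e$ is any edge of that cycle, the endpoints of $e$ remain connected in $G(\mathfrak{a}_{\pi})-e$, so $\pi_{G(\mathfrak{a}_{\pi})-e}=\pi$ and the corresponding atom is redundant. In the other direction, if $G(\mathfrak{a}_{\pi})$ is acyclic, every edge is a bridge; deleting any edge splits one connected component into two, which strictly coarsens $\pi_G$, and no atom of $\mathfrak{a}_{\pi}$ is redundant.

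The main obstacle is the characterization of minimality via acyclicity in part (2); however, this step reduces to the elementary graph-theoretic fact that an edge of a graph is a bridge if and only if it lies on no cycle, so the difficulty is conceptual rather than technical. The remainder of the proof is routine verification that the maps defined above are well-defined and mutually inverse.
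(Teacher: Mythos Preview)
Your proposal is correct and follows essentially the same approach as the paper: both set up the edge--atom dictionary, verify that the map $\mathfrak{a}_{\pi}\mapsto G(\mathfrak{a}_{\pi})$ is a bijection onto spanning subgraphs with $\pi_G=\pi$, and then characterize minimality via acyclicity using the cycle/bridge dichotomy. One small slip: deleting a bridge splits a component and hence strictly \emph{refines} (not coarsens) $\pi_G$; the conclusion that no atom is redundant is unaffected.
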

	\begin{proof}	
		Let $\mathfrak{a}_{\pi}$ be an atomic decomposition of the partition $\pi = \{\textsc{b}_1, \textsc{b}_2, \ldots, \textsc{b}_k\}$. We construct a labeled spanning subgraph $\mathfrak{f}_{\mathfrak{a}_{\pi}}$ of $G_{\pi}$ as follows:
		The vertex set of $\mathfrak{f}_{\mathfrak{a}_{\pi}}$ is $X$. Two vertices $x, x' \in X$ are connected by an edge in $\mathfrak{f}_{\mathfrak{a}_{\pi}}$ if and only if the atom $\pi_{xx'} \in \mathfrak{a}_{\pi}$. By construction, since each atom in $\mathfrak{a}_{\pi}$ is determined by a unique pair of elements forming its only nonsingleton block, $\mathfrak{f}{\mathfrak{a}_{\pi}}$ is necessarily loop-free. Moreover, by virtue of definition of the join operator, the graph $\mathfrak{f}_{\mathfrak{a}_{\pi}}$ respects the block structure of $\pi$, meaning that all edges connect elements within the same block of $\pi$, and any two vertices in the same block of $\pi$ are connected by a path in $\mathfrak{f}_{\mathfrak{a}_{\pi}}$. Thus, a subgraph $C$ is a connected component of $\mathfrak{f}_{\mathfrak{a}_{\pi}}$ if and only if  there is a block $\textsc{b}_s$, $1\leq s\leq k$, of $\pi$ such that $C$ is a connected graph on the elements of $\textsc{b}_s$. Therefore, $\mathfrak{f}_{\mathfrak{a}_{\pi}}$ is a spanning subgraph of $G_{\pi}$ such that $\pi_{\mathfrak{f}_{\mathfrak{a}_{\pi}}}=\pi$.
		
		Let $\phi_{\pi}$ be the function that assigns the spanning subgraph $\mathfrak{f}_{\mathfrak{a}_{\pi}}$ to the atomic decomposition $\mathfrak{a}_{\pi}$ of $\pi$. We argue that $\phi_{\pi}$ is a bijection. Indeed, suppose $\mathfrak{a}_{\pi}$ and $\mathfrak{\widehat{a}}_{\pi}$ yield the same spanning subgraph of $G_{\pi}$, i.e., $\mathfrak{f}_{\mathfrak{a}_{\pi}}=\mathfrak{f}_{\mathfrak{\widehat{a}}_{\pi}}$. Then, $\mathfrak{f}_{\mathfrak{a}_{\pi}}$ and $\mathfrak{f}_{\mathfrak{\widehat{a}}_{\pi}}$ have identical edges, implying that $\mathfrak{a}_{\pi}$ and $\mathfrak{\widehat{a}}_{\pi}$ consist of the same atoms. Thus, $\mathfrak{a}_{\pi} = \mathfrak{\widehat{a}}_{\pi}$, proving injectivity. Let us consider now a spanning subgraph $\mathfrak{f}_{\pi}$ of $G_{\pi}$. Define $\mathfrak{a}$ as the set of atoms $\pi_{xx'}$ corresponding to the edges $\{x,x'\}$ of $\mathfrak{f}_{\pi}$. Then, $\mathfrak{f}_{\pi}$ is precisely $\phi_{\pi}(\mathfrak{a})$, proving surjectivity.
		
		To complete the proof, note that a decomposition $\mathfrak{a}_{\pi}$ is minimal if and only if $\mathfrak{f}_{\mathfrak{a}_{\pi}}$ is a forest. Any cycle in $\mathfrak{f}_{\mathfrak{a}{\pi}}$ implies the existence of redundant atoms in $\mathfrak{a}_{\pi}$, contradicting minimality. Conversely, removing superfluous atoms in a non-minimal decomposition corresponds to removing edges from $\mathfrak{f}_{\mathfrak{a}_{\pi}}$, resulting in a forest. 
	\end{proof}

	\begin{Corollary} \label{closedunderunion}
		The union of atomic decompositions of a partition $\pi$ is also an atomic decomposition of this partition.
	\end{Corollary}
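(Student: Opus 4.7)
\textbf{Proof proposal for Corollary \ref{closedunderunion}.} The plan is to verify the two conditions that define an atomic decomposition of $\pi$ directly from the definition, leveraging monotonicity of the join and the definition of $\mathcal{A}(\pi)$. Let $\mathfrak{a}_1, \mathfrak{a}_2, \ldots, \mathfrak{a}_m$ be atomic decompositions of $\pi$, and set $\mathfrak{a} := \bigcup_{i=1}^{m} \mathfrak{a}_i$. I must show that $\mathfrak{a} \subseteq \mathcal{A}(\pi)$ and that $\bigvee \mathfrak{a} = \pi$.

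First, since each $\mathfrak{a}_i \subseteq \mathcal{A}(\pi)$ by assumption, the union $\mathfrak{a}$ is also contained in $\mathcal{A}(\pi)$. Consequently, every atom of $\mathfrak{a}$ refines $\pi$, and the monotonicity of $\vee$ yields $\bigvee \mathfrak{a} \preceq \pi$. Second, since $\mathfrak{a}_1 \subseteq \mathfrak{a}$, monotonicity again gives $\pi = \bigvee \mathfrak{a}_1 \preceq \bigvee \mathfrak{a}$. The two inequalities combined yield $\bigvee \mathfrak{a} = \pi$, so $\mathfrak{a}$ is an atomic decomposition of $\pi$.

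Alternatively, one can deduce the result from Theorem \ref{Bijection}: under the bijection $\phi_\pi$, each $\mathfrak{a}_i$ corresponds to a spanning subgraph of $G_\pi$ whose connected components are exactly the blocks of $\pi$. The union $\mathfrak{a}$ maps to the spanning subgraph obtained by taking the union of the edge sets, which is still a subgraph of $G_\pi$ (hence adds no edges across distinct blocks of $\pi$) and still connects every pair of elements lying in the same block of $\pi$. Thus the induced partition is again $\pi$.

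There is no serious obstacle here; the statement is essentially a monotonicity remark. The only thing to be careful about is not to confuse ``atomic decomposition'' with ``\emph{minimal} atomic decomposition'', for the union of two minimal decompositions is generally not minimal, as illustrated by $\mathfrak{a}_\pi \cup \mathfrak{a}'_\pi$ in the introductory example.
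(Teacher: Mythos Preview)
Your proof is correct. The paper's own proof is just your alternative argument via Theorem~\ref{Bijection}: the union of spanning subgraphs of $G_\pi$ is again a spanning subgraph of $G_\pi$ with the same connected components, so the corresponding union of atomic decompositions decomposes $\pi$. Your primary argument is more elementary---it works directly from the lattice definition of atomic decomposition using only $\mathfrak{a}_i\subseteq\mathcal{A}(\pi)$ and monotonicity of $\vee$, without invoking the graph bijection at all. Both routes are one-liners; yours has the minor advantage of not depending on Theorem~\ref{Bijection}, while the paper's keeps everything phrased in the graph language it has just set up.
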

	\begin{proof} This follows immediately from Theorem \ref{Bijection} and the fact that the union of spanning subgraphs of a graph is also a spanning subgraph of this graph.
	\end{proof}
	
	\begin{Corollary} \label{Lemmauniqueseqeunce} The subset
		$\mathfrak{a}_{\pi}\subseteq \mathcal{A}(\pi)$ is an atomic decomposition of the partition $\pi$ if and only if for every  atom 
		$\pi_{xx'}$,  $\pi_{xx'}\preceq \pi$ if and only if there exists a  sequence   $\pi_{x_0x_1}, \pi_{x_1x_2}, \ldots,\pi_{x_{s-1}x_s}$  of distinct atoms in $\mathfrak{a}_{\pi}$ such that $x_0=x$  and  $x_s=x'$.
		Furthermore, $\mathfrak{a}_{\pi}$ is minimal if and only if, for every atom $\pi_{xx'}\preceq \pi$, the sequence above is unique.
	\end{Corollary}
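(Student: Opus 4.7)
The plan is to reduce both assertions to graph-theoretic statements by means of Theorem \ref{Bijection} and then invoke standard facts about paths in graphs. The key observation is that, under the map $\mathfrak{a}_\pi\mapsto \mathfrak{f}_{\mathfrak{a}_\pi}$, a sequence of distinct atoms $\pi_{x_0x_1},\pi_{x_1x_2},\ldots,\pi_{x_{s-1}x_s}$ in $\mathfrak{a}_\pi$ is precisely the datum of a walk in $\mathfrak{f}_{\mathfrak{a}_\pi}$ from $x_0$ to $x_s$ whose edges are all distinct (a \emph{trail}); and in any simple graph, the existence of such a trail between two vertices is equivalent to the two vertices lying in a common connected component.

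For the first equivalence, I would argue as follows. By definition, $\mathfrak{a}_\pi$ is an atomic decomposition of $\pi$ iff $\pi_{\mathfrak{f}_{\mathfrak{a}_\pi}}=\pi$, and, by the description of the join of atoms given in the introduction, this happens iff the connected components of $\mathfrak{f}_{\mathfrak{a}_\pi}$ coincide with the blocks of $\pi$. Since $\pi_{xx'}\preceq\pi$ is equivalent to $x$ and $x'$ lying in the same block of $\pi$, this condition becomes: $\pi_{xx'}\preceq\pi$ iff $x$ and $x'$ lie in the same connected component of $\mathfrak{f}_{\mathfrak{a}_\pi}$, which by the setup paragraph is the stated condition on sequences. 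Note that the ``only if'' direction of the inner biconditional is automatic from $\mathfrak{a}_\pi\subseteq\mathcal{A}(\pi)$, because each atom in such a sequence already refines $\pi$ and hence forces all the $x_i$ into a single block of $\pi$.

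For the minimality clause, I would invoke part (2) of Theorem \ref{Bijection}: $\mathfrak{a}_\pi$ is minimal iff $\mathfrak{f}_{\mathfrak{a}_\pi}$ is a spanning forest. A graph is a forest precisely when any two vertices in the same connected component are joined by a unique path, and this translates into uniqueness of the sequence of distinct atoms connecting $x$ and $x'$ whenever $\pi_{xx'}\preceq\pi$. The step requiring the most care---and the one I would write out in detail---is the passage from ``trail'' to ``path'': any trail that repeats a vertex can be shortened by excising the closed sub-trail between two occurrences of that vertex, which either produces a strictly shorter trail with the same endpoints or exhibits a cycle in the edge set. In a spanning forest no cycle can occur, so every trail is already a path; consequently, uniqueness of trails, of paths, and of sequences of distinct atoms in $\mathfrak{a}_\pi$ all coincide, and the corollary follows. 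The remainder is a direct translation through Theorem \ref{Bijection}.
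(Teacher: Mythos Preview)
Your proposal is correct and follows essentially the same route as the paper: both reduce the statement to graph theory via Theorem~\ref{Bijection}, identify the sequence of atoms with a walk in $\mathfrak{f}_{\mathfrak{a}_\pi}$, and invoke the forest characterization for the minimality clause. Your treatment is in fact slightly more careful than the paper's, since you explicitly address the distinction between trails (distinct edges, which is what ``distinct atoms'' gives) and paths (distinct vertices), whereas the paper speaks directly of paths without comment; your observation that in a forest every trail is already a path closes that small gap cleanly.
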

	\begin{proof} From Theorem \ref{Bijection}, we know that $\mathfrak{a}_{\pi}$ is an atomic decomposition of $\pi$ if and only if $\mathfrak{f}_{\mathfrak{a}_{\pi}}$ is a spanning subgraph of $G_{\pi}$. Since an atom $\pi_{xx'}$ refines $\pi$ (i.e., $\pi_{xx'} \preceq \pi$) if and only if $x$ and $x'$ lie in the same block of $\pi$, it follows that $\pi_{xx'} \preceq \pi$ if and only if there exists a path in $\mathfrak{f}_{\mathfrak{a}_{\pi}}$ connecting $x$ and $x'$. The edges of this path correspond to the desired sequence of atoms $\pi_{x_0x_1}, \pi_{x_1x_2}, \ldots, \pi_{x_{s-1}x_s}$.
		
		Furthermore, $\mathfrak{a}_{\pi}$ is minimal if and only if $\mathfrak{f}_{\mathfrak{a}_{\pi}}$ is a forest. In this case, any path between two vertices $x$ and $x'$ in $\mathfrak{f}_{\mathfrak{a}_{\pi}}$ is unique, ensuring that the sequence of atoms connecting $x$ and $x'$ is also unique. Conversely, if the sequence of atoms is unique for every pair $x$ and $x'$ such that $\pi_{xx'} \preceq \pi$, then $\mathfrak{f}_{\mathfrak{a}_{\pi}}$ must be a forest, and $\mathfrak{a}_{\pi}$ is minimal.
	\end{proof}	
	
	\begin{Corollary}\label{allminimalsamesize}
		All minimal atomic decompositions of a partition $\pi$ consist of the same number of atoms, which is $\displaystyle \sum_{i=1}^{|\pi|} \left(n_i-1\right)=n-|\pi|$, where $n_i$ stands for the number of elements in the $i$th block of $\pi$. Additionally, the number $m$ of atoms in any atomic decomposition of the partition $\pi$ satisfies 
		\begin{equation}
	n-|\pi| \leq m\leq \sum_{i=1}^{|\pi|} \binom{n_i}{2}.
		\end{equation}
	\end{Corollary}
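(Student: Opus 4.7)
The plan is to reduce everything to graph theory via Theorem \ref{Bijection} and exploit the particularly simple structure of the graph $G_{\pi}$. Observe first that $G_{\pi}$, as defined in Theorem \ref{Bijection}, has an edge between $x$ and $x'$ exactly when $x$ and $x'$ lie in the same block of $\pi$; consequently $G_{\pi}$ is the vertex-disjoint union of the complete graphs $K_{n_1}, K_{n_2},\ldots, K_{n_{|\pi|}}$, one for each block $\textsc{b}_i$ of $\pi$.

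For the equality part I would argue as follows. By Theorem \ref{Bijection}, each minimal atomic decomposition of $\pi$ corresponds to a spanning forest $\mathfrak{f}$ of $G_{\pi}$ with $\pi_{\mathfrak{f}}=\pi$. The condition $\pi_{\mathfrak{f}}=\pi$ forces every pair of vertices lying in the same block of $\pi$ to be joined by a path in $\mathfrak{f}$; combined with the fact that $\mathfrak{f}\subseteq G_{\pi}$ contains no edge across distinct blocks, this means the connected components of $\mathfrak{f}$ are exactly spanning subtrees of the complete graphs on the blocks $\textsc{b}_1,\ldots,\textsc{b}_{|\pi|}$. Since a tree on $n_i$ vertices has exactly $n_i-1$ edges, the number of atoms in the decomposition is
\[
\sum_{i=1}^{|\pi|}(n_i-1)=\Bigl(\sum_{i=1}^{|\pi|} n_i\Bigr)-|\pi| = n-|\pi|,
\]
independently of which minimal decomposition was chosen.

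For the inequalities, I would apply the first part of Theorem \ref{Bijection}: an arbitrary atomic decomposition of $\pi$ corresponds to a spanning subgraph $G$ of $G_{\pi}$ with $\pi_{G}=\pi$, i.e., a spanning subgraph whose connected components are connected spanning subgraphs of the $K_{n_i}$. The lower bound $n-|\pi|\leq m$ comes from the fact that a connected graph on $n_i$ vertices has at least $n_i-1$ edges (achieved exactly by spanning trees, recovering the minimal case). The upper bound comes from taking the full graph on each block, giving at most $\binom{n_i}{2}$ edges per block and hence $\sum_{i=1}^{|\pi|}\binom{n_i}{2}$ in total.

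The only subtle step, and the one I would make sure to state explicitly, is the justification that $\pi_{\mathfrak{f}}=\pi$ forces each connected component of the spanning forest (or spanning subgraph) to cover an \emph{entire} block of $\pi$: if a block were split across two components, two elements of that block would not be joined by a path and hence would not lie in the same block of $\pi_{\mathfrak{f}}$, contradicting $\pi_{\mathfrak{f}}=\pi$. Once this is in place, the arithmetic is immediate and the corollary follows without further calculation.
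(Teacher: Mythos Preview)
Your proposal is correct and follows essentially the same approach as the paper: both reduce via Theorem~\ref{Bijection} to the observation that the connected components of $G_\pi$ are the complete graphs on the blocks, that spanning forests of $G_\pi$ therefore have $\sum_i(n_i-1)=n-|\pi|$ edges, and that arbitrary connected spanning subgraphs on each block yield the stated bounds. Your write-up is in fact slightly more explicit than the paper's in justifying why the components of the spanning forest must coincide with the blocks, which is a welcome clarification rather than a departure.
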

	\begin{proof}  The number of atoms in a minimal atomic decomposition of $\pi$ is determined by Theorem \ref{Bijection}, which ensures that $\mathfrak{f}_{\mathfrak{a}{\pi}}$ is a spanning forest of $G_{\pi}$. Since every connected component of $\mathfrak{f}_{\mathfrak{a}{\pi}}$ corresponds to a block of $\pi$ and is a tree, the number of edges (and hence atoms) in $\mathfrak{f}_{\mathfrak{a}{\pi}}$ is precisely $\sum_{i=1}^{|\pi|} (n_i - 1)=n-|\pi|$.
		
		For an arbitrary atomic decomposition, the lower bound follows because it must contain at least the atoms of a minimal decomposition to preserve connectivity within each block. The upper bound is attained for the atomic decomposition $\mathcal{A}(\pi)$, leading to the sum $\sum_{i=1}^{|\pi|} \binom{n_i}{2}$.
	\end{proof}
	
	\begin{Corollary}\label{not minimal}
		The union of different minimal atomic decompositions of a partition $\pi$ is not minimal.
	\end{Corollary}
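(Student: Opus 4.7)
The plan is to give a short cardinality argument built from the two preceding corollaries. Let $\mathfrak{a}_{\pi}$ and $\mathfrak{a}'_{\pi}$ be two distinct minimal atomic decompositions of $\pi$. By Corollary \ref{closedunderunion}, the union $\mathfrak{a}_{\pi}\cup\mathfrak{a}'_{\pi}$ is again an atomic decomposition of $\pi$. By Corollary \ref{allminimalsamesize}, every minimal atomic decomposition of $\pi$ has exactly $n-|\pi|$ atoms; in particular both $|\mathfrak{a}_{\pi}|=|\mathfrak{a}'_{\pi}|=n-|\pi|$, and any minimal decomposition must have precisely this cardinality.

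Since $\mathfrak{a}_{\pi}\neq\mathfrak{a}'_{\pi}$ while the two sets have the same size, neither can be contained in the other, so there exists at least one atom in $\mathfrak{a}'_{\pi}\setminus\mathfrak{a}_{\pi}$. Consequently $|\mathfrak{a}_{\pi}\cup\mathfrak{a}'_{\pi}|\geq |\mathfrak{a}_{\pi}|+1=n-|\pi|+1>n-|\pi|$. The union is therefore an atomic decomposition of $\pi$ whose cardinality exceeds that of any minimal decomposition, and so it cannot be minimal.

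As a sanity check, the same conclusion can be read off directly from Theorem \ref{Bijection}: the spanning forests $\mathfrak{f}_{\mathfrak{a}_{\pi}}$ and $\mathfrak{f}_{\mathfrak{a}'_{\pi}}$ of $G_{\pi}$ have the same connected components (the blocks of $\pi$) and the same number of edges in each component, so any edge belonging to one but not the other closes a cycle when added to the other, making $\mathfrak{f}_{\mathfrak{a}_{\pi}}\cup\mathfrak{f}_{\mathfrak{a}'_{\pi}}$ not a forest. There is essentially no obstacle here; the only point that requires a line of justification is the strict inequality $|\mathfrak{a}_{\pi}\cup\mathfrak{a}'_{\pi}|>n-|\pi|$, which uses that two distinct sets of the same finite cardinality cannot satisfy a containment relation.
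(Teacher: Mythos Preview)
Your argument is correct and is essentially the paper's own proof: both use Corollary~\ref{allminimalsamesize} to conclude that two distinct minimal decompositions, having the same cardinality $n-|\pi|$, must have a union of strictly larger size and hence cannot be minimal. Your explicit appeal to Corollary~\ref{closedunderunion} (to ensure the union is an atomic decomposition in the first place) and the forest-based sanity check are welcome additions, but the core cardinality argument is identical.
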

	\begin{proof} From Corollary \ref{allminimalsamesize}, all minimal atomic decompositions of $\pi$ have the same number of atoms. This implies that the union of two distinct minimal atomic decompositions strictly contains both decompositions. Consequently, the union exceeds the size of a minimal decomposition, violating minimality.
	\end{proof}
	
	Our next result provides an analytical expression for the function $\mathfrak{N}$.
	
	\begin{Corollary} Let $\mathfrak{N}:\Pi(X)\rightarrow\mathbb{N}$ be the function which maps each partition $\pi$ of $X$ to the number of its minimal atomic decompositions. Then, for every partition $\pi = \{\textsc{b}_1, \textsc{b}_2, \ldots, \textsc{b}_k\}$, with $n_i=|\textsc{b}_i|$ and $k_0$ denoting the number of nonsingleton blocks of $\pi$, the following holds:
		\begin{equation}\label{formula}
		\mathfrak{N}(\pi)=\prod_{i=1}^{k_0} n_i^{n_i-2}.
		\end{equation} 
	\end{Corollary}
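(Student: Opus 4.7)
The plan is to reduce the counting problem to a classical graph-theoretic enumeration via Theorem \ref{Bijection}, and then invoke Cayley's formula. By part (2) of that theorem, $\mathfrak{N}(\pi)$ equals the number of spanning forests of the graph $G_\pi$. So the first step is to identify the structure of $G_\pi$: since $\{x,x'\}$ is an edge of $G_\pi$ precisely when $\pi_{xx'}\preceq\pi$, that is, precisely when $x$ and $x'$ lie in the same block of $\pi$, the graph $G_\pi$ is the disjoint union of the complete graphs $K_{n_1}, K_{n_2}, \ldots, K_{n_k}$ indexed by the blocks $\textsc{b}_1,\ldots,\textsc{b}_k$ of $\pi$.

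Second, I would observe that a spanning forest of a disjoint union of graphs is exactly a choice of a spanning tree for each connected component; since each $K_{n_i}$ is already connected, the spanning forests of $G_\pi$ are in bijection with tuples $(T_1,\ldots,T_k)$ where $T_i$ is a spanning tree of $K_{n_i}$. Thus
\begin{equation*}
\mathfrak{N}(\pi)=\prod_{i=1}^{k}\tau(K_{n_i}),
\end{equation*}
where $\tau(K_{n_i})$ denotes the number of spanning trees of $K_{n_i}$.

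Third, I would apply Cayley's formula, which asserts $\tau(K_{n_i})=n_i^{n_i-2}$ for $n_i\geq 1$ (with the convention $1^{-1}=1$, so a singleton block contributes a single ``empty tree''). Finally, to match the product indexing in (\ref{formula}), note that blocks of size $n_i=1$ contribute a factor of $1^{-1}=1$ and blocks of size $n_i=2$ contribute $2^0=1$, so these can be omitted; the remaining factors are exactly those corresponding to the $k_0$ nonsingleton blocks, yielding the stated product $\prod_{i=1}^{k_0}n_i^{n_i-2}$.

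The argument is essentially transparent once the reduction to spanning forests is in hand; there is no real obstacle beyond recognizing that $G_\pi$ is a disjoint union of complete graphs and quoting Cayley's formula. The only minor subtlety worth flagging explicitly is the convention for singleton blocks, which I would handle with a one-line remark rather than a separate case analysis.
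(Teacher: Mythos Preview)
Your proposal is correct and follows essentially the same approach as the paper: reduce via Theorem~\ref{Bijection} to counting spanning forests of $G_\pi$, identify $G_\pi$ as a disjoint union of complete graphs on the blocks, and apply Cayley's formula componentwise. The only minor slip is in the final sentence, where you say size-$2$ blocks ``can be omitted'' before identifying the remaining factors with the $k_0$ nonsingleton blocks---size-$2$ blocks \emph{are} nonsingleton and are counted among the $k_0$, but since they contribute $2^0=1$ this has no effect on the product.
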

	\begin{proof}
		The graph $G_{\pi}$ has $k$ connected components, each being a complete graph on the elements of a block of $\pi$. It is well known that the number of spanning forests of a graph is given by the product of the number of spanning trees of its connected components. Specifically, for $G_{\pi}$, this is $\prod_{i=1}^k t_i$, 
		where $t_i$ is the number of spanning trees of the $i$th connected component.
		
		Using Cayley's formula, which states that a complete graph on $n_i\geq2$ vertices has $n_i^{n_i - 2}$ spanning trees, we deduce that if the $i$th connected component of $G_{\pi}$ contains $n_i$ vertices, the number of spanning trees of that component is $n_i^{n_i - 2}$. Finally, applying Theorem \ref{Bijection}, which establishes a correspondence between minimal atomic decompositions of $\pi$ and spanning forests of $G_{\pi}$, we conclude that	
		
		\begin{equation}
		\mathfrak{N}(\pi)=\prod_{i=1}^{k_0} n_i^{n_i-2}.
		\end{equation}
	\end{proof}

We now turn to analyzing the atomic decompositions of $\Pi(X)$ from an algebraic perspective. To this end, and for the sake of completeness, we shall briefly review some relevant algebraic concepts.

A \emph{monoid} is a set equipped with an associative binary operation with an identity element.  Thus, $\Pi(X)$ with the join operator $\vee$ is a commutative monoid. For the systematic study of the  atomic decompositions in monoid, usually referred to as factorizations, D.D. Anderson, D.D. Anderson and M. Zafrullah introduced the following properties:

\begin{enumerate}
	\item  A subset $I$ of a monoid $M$ is an \emph{ideal} of $M$ provided that $I + M \subseteq I$. The ideal $I$ is \emph{principal} if $I = x + M$ for some $x \in M$. The monoid $M$ satisfies the \emph{ascending chain condition on principal ideals} (or \emph{ACCP}) if each increasing sequence of principal ideals of $M$ eventually stabilizes.
	\item An atomic monoid $M$ is said to be
	a finite factorization monoid (FFM) if every element admits only finitely many factorizations. 
	\item An atomic monoid $M$ is said to be a bounded factorization monoid (BFM) if for every element there is an upper bound for the number
	of atoms (counting repetitions) in each of its factorizations.
	\item An atomic
	monoid $M$ is said to be half-factorial (HFM) if any two factorizations of the same element have the same number of
	atoms (counting repetitions).
	\item An atomic
	monoid $M$ is said to be unique factorization monoid (UFM) if every element has a unique factorization.
\end{enumerate}
	
It is well-known that HFM implies BFM, which in turn implies ACCP, and consequently implies atomic. Additionally, UFM implies FFM, which in turn implies BFM. 

Since $\Pi(X)$ is finite, it satisfies the ACCP condition. However, the join operator is idempotent, i.e., for every partition $\pi\in\Pi(X)$, we have $\pi\vee\pi=\pi$, which means that we can add an atom to an atomic decomposition as many times as we wish. This behavior causes $\Pi(X)$ to fail all the other conditions.

In this context, however, it is more beneficial to disregard repeated atoms, as they are never necessary and may hinder our ability to identify distinguishing features among finite lattices. For this reason, we will consider variants of the conditions above in which repeated atoms are not allowed.

\begin{Proposition} The lattice $\Pi(X)$ satisfies the ACCP, FFM and BFM conditions, and fails to satisfy HFM and UFM conditions. (All conditions here do not allow repeated atoms.)
\end{Proposition}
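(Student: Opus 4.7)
The proof proposal is essentially a matching exercise between each of the five monoid conditions and the already-established structural facts about $\Pi(X)$, with the two failures illustrated by the worked example following equations \eqref{decomp1}--\eqref{decomp3}.

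First I would dispatch the three positive assertions in order, since each reduces to a finiteness observation once repeated atoms are forbidden. For ACCP, the set $\Pi(X)$ is itself finite (it has at most $|X|!$ elements, and in any case the rank function is bounded by $n-1$), so every ascending chain of principal ideals, or indeed of any subsets, stabilizes; no idempotency issue arises here. For FFM, I would note that every factorization of a partition $\pi$ is, by Theorem \ref{Bijection}, a spanning subgraph $G$ of $G_\pi$ with $\pi_G = \pi$; since the edge set of $G_\pi$ is finite (bounded by $\sum_{i=1}^{|\pi|}\binom{n_i}{2}$), there are only finitely many such subgraphs, hence only finitely many factorizations of $\pi$ when repetitions are disallowed. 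For BFM, the same graph-theoretic translation gives an explicit bound: by Corollary \ref{allminimalsamesize}, the number of atoms in any (non-repetitive) factorization of $\pi$ lies in the interval $\left[n-|\pi|,\ \sum_{i=1}^{|\pi|}\binom{n_i}{2}\right]$, so the upper end of this interval serves as the BFM bound.

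Next I would handle the two negative assertions using the example already developed in the introduction. For the failure of HFM, take the partition $\pi = \{\{x_1,x_2,x_3,x_4\},\{x_5,x_6,x_7\}\}$ and compare $\mathfrak{a}_\pi$ of \eqref{decomp1}, which contains $5$ atoms, with $\mathfrak{a}''_\pi$ of \eqref{decomp3}, which contains $7$ atoms; since both are (non-repetitive) factorizations of the same partition but differ in atom count, HFM is violated. For the failure of UFM, the decompositions $\mathfrak{a}_\pi$ of \eqref{decomp1} and $\mathfrak{a}'_\pi$ of \eqref{decomp2} are distinct factorizations of the same $\pi$ (they share no atom beyond $\pi_{x_1x_2}$, $\pi_{x_5x_6}$, $\pi_{x_6x_7}$), so $\pi$ has more than one factorization.

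There is no real obstacle in this proof; the only point requiring a brief clarifying sentence is to confirm that the ``no repeated atoms'' convention is the natural one in the set-theoretic framework used throughout the paper, where an atomic decomposition $\mathfrak{a}_\pi$ has already been defined as a \emph{subset} of $\mathcal{A}(\pi)$. Under this convention, each of the three positive conditions follows from a single finiteness observation already in evidence, and the two failures are witnessed by the running example without any further construction.
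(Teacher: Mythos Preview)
Your proof is correct. For the positive assertions you and the paper proceed in essentially the same way: both invoke Theorem \ref{Bijection} to identify factorizations with spanning subgraphs of $G_\pi$, and finiteness does the rest (the paper simply derives BFM and ACCP as formal consequences of FFM, while you treat each separately and cite the explicit bounds of Corollary \ref{allminimalsamesize} for BFM). The genuine difference is in the negative assertions. You exhibit the running example $\pi=\{\{x_1,x_2,x_3,x_4\},\{x_5,x_6,x_7\}\}$ and point to $\mathfrak{a}_\pi$ versus $\mathfrak{a}''_\pi$ (lengths $5$ and $7$) for the failure of HFM, and to $\mathfrak{a}_\pi$ versus $\mathfrak{a}'_\pi$ for the failure of UFM; this is concrete and directly matches the definitions. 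The paper instead gestures at ``an atom and any partition that covers it,'' but phrases the conclusion as two \emph{different} partitions yielding different \emph{numbers} of atomic decompositions---which, read literally, is not what HFM or UFM demand (both conditions concern factorizations of a \emph{single} element). Your explicit counterexamples are therefore the cleaner and more self-contained demonstration; the paper's intended witness (a rank-$2$ partition with a block of size $3$, admitting both length-$2$ and length-$3$ factorizations) works too, but requires the reader to reconstruct the argument.
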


\begin{proof}
  Given a partition $\pi\in\Pi(X)$, let $m_{\pi}$ denote the number of connected components of the graph $G_{\pi}$. Theorem \ref{Bijection} states that the number of atomic decompositions of $\pi$ is equal to the number of spanning subgraphs of $G_{\pi}$ with $m_{\pi}$ connected components. Therefore, $\Pi(X)$ satisfies the FFM condition, and consequently, the BFM and ACCP conditions. 
  
  However, the number of spanning subgraphs of the $G_{\pi}$ depends on the sizes of the blocks of $\pi$, which form a partition of the integer $n$ into $m_{\pi}$ parts. Consequently, there exist partitions whose corresponding partitions of the integer $n$ differ and therefore yield different numbers of atomic decompositions --- for instance, an atom and any partitions that covers it.
\end{proof}

We introduce here a novel condition of the same kind, which enables us to distinguish among ranked lattices, although it does not extend to general monoids.

A ranked lattice $L$ is said to satisfy the half-factorial ranked lattice (HFRL) condition if, for any two elements $x,x'\in L$, the equality $\text{rank}(x)=\text{rank}(x')$ implies that $x$ and $x'$ have the same number of atomic decompositions. This property, for instance, distinguishes the lattice of partitions from the lattice of subspaces of a vector space over a finite field, since the former fails to satisfy it, while the latter does.  

\begin{Lemma} $\Pi(X)$ fails to satisfy HFRL.
	\end{Lemma}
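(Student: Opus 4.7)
The plan is to produce a direct counterexample: two partitions of $X$ having the same rank but different numbers of atomic decompositions. Since $\operatorname{rank}(\pi) = n - |\pi|$, two partitions share a rank precisely when they have the same number of blocks, so the task reduces to exhibiting two partitions with equal block counts but different multisets of block sizes, which requires $n = |X| \geq 4$.

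For any $n \geq 4$, I would take
\begin{equation*}
\pi_1 = \{\{x_1, x_2, x_3\}, \{x_4\}, \{x_5\}, \ldots, \{x_n\}\}, \qquad \pi_2 = \{\{x_1, x_2\}, \{x_3, x_4\}, \{x_5\}, \ldots, \{x_n\}\},
\end{equation*}
both of which have $n-2$ blocks and hence rank $2$. By Theorem \ref{Bijection}, the atomic decompositions of $\pi_i$ correspond bijectively to the spanning subgraphs of $G_{\pi_i}$ whose connected components match the blocks of $\pi_i$. Because $G_{\pi_i}$ is the disjoint union of complete graphs on the blocks, this count factors as $\prod_j c(n_j)$, where $n_j$ ranges over the block sizes and $c(m)$ denotes the number of connected labeled graphs on $m$ vertices.

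Plugging in $c(1) = c(2) = 1$ and $c(3) = 4$ (the three paths on three vertices together with the triangle), $\pi_1$ admits $4$ atomic decompositions while $\pi_2$ admits only $1$, so $\Pi(X)$ fails HFRL. I do not anticipate any significant obstacle; the only nuance worth noting is that HFRL is phrased in terms of all atomic decompositions rather than merely the minimal ones, but the enumeration above handles the general case. The same conclusion is in fact already visible at the level of minimal decompositions, since the formula $\mathfrak{N}(\pi) = \prod_{i=1}^{k_0} n_i^{n_i-2}$ established just above yields $\mathfrak{N}(\pi_1) = 3 \neq 1 = \mathfrak{N}(\pi_2)$.
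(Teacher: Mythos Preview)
Your proposal is correct and follows essentially the same approach as the paper: exhibit two partitions with the same number of blocks but different block-size multisets, and observe that the count of atomic decompositions (via connected spanning subgraphs of the block-wise complete graph) differs. For $n=5$ your example is literally the paper's example (block sizes $3,1,1$ versus $2,2,1$); your count of $4$ versus $1$ is in fact the correct one, whereas the paper records ``$2$ and $4$'', an apparent slip that does not affect the conclusion.
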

	\begin{proof}
		Any two partitions $\pi$ and $\pi'$ at the same level of $\Pi(X)$ have the same number of blocks, which means that their respective sequences of block sizes correspond to partitions of the integer $n$ into the same number of parts. However, we can always choose $\pi$ and $\pi'$ with corresponding partitions of $n$ that differ in a way that leads to a different number of atomic decompositions. For example, suppose $\pi$ and $\pi'$ each have three blocks, with block size sequences 1+2+2 and 1+1+3. The number of atomic decompositions is 2 and 4, respectively.
	\end{proof}

	Aiming at investigating the other properties of minimal atomic decompositions by analyzing the behavior of the function $\mathfrak{N}$ along chains in $\Pi(X)$, we conclude this section with two outcomes that shed some light on the topology of the lattice of partitions.
	
	A \emph{chain} in the lattice of partitions $\Pi(X)$ is a sequence of partitions in which every partition refines all partitions  that succeed it in the sequence; i.e., $\pi_0\preceq\pi_1\preceq\ldots\preceq\pi_s$.  Notice that, in particular, for every pair of partitions such that $\pi\prec\pi'$,  there is a chain in which every partition is covered by its immediate successor: $\pi=\pi_0\sqsubset\pi_1\sqsubset\pi_2\sqsubset\ldots\sqsubset\pi_s=\pi'$. Let us suppose that $\pi\sqsubset \pi'$ and let $\textsc{b}_i$ and $\textsc{b}_j$ denote the two blocks of $\pi$ needed to be merged to obtain $\pi'$. Note that $\pi' = \pi\vee\pi_{x_ix_j}$ if and only if 
	$x_i\in \textsc{b}_i$ and $x_j\in \textsc{b}_j$. Let  $x_i$ and $x_j$ be elements satisfying this condition and let us define the mapping $\Psi_{x_ix_j}$ that to every minimal atomic decomposition $\mathfrak{a}_{\pi}$ of $\pi$ assigns $\mathfrak{a}_{\pi'}:=\mathfrak{a}_{\pi}\cup \{\pi_{x_ix_j}\}$. 
	
	\begin{Proposition}\label{addinganatominjections}
		The map $\Psi_{x_ix_j}$ is an injective function whose range lies within the set of all possible atomic decompositions of $\pi'$. Moreover, if we consider a different pair of elements $y_i\in \textsc{b}_i$ and $y_j\in \textsc{b}_j$, the respective ranges of $\Psi_{x_ix_j}$ and $\Psi_{y_iy_j}$ do not overlap.
	\end{Proposition}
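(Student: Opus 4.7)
The plan hinges on one simple observation: since $x_i\in\textsc{b}_i$ and $x_j\in\textsc{b}_j$ lie in different blocks of $\pi$, the atom $\pi_{x_ix_j}$ does not refine $\pi$, so it cannot belong to any subset of $\mathcal{A}(\pi)$ — in particular, to no atomic decomposition of $\pi$. The same remark applies to any atom $\pi_{y_iy_j}$ with $y_i\in\textsc{b}_i$ and $y_j\in\textsc{b}_j$. This fact drives both the injectivity of $\Psi_{x_ix_j}$ and the non-overlap statement. To verify first that $\Psi_{x_ix_j}(\mathfrak{a}_{\pi})$ really lies in the set of atomic decompositions of $\pi'$, I would use the hypothesis $\pi'=\pi\vee\pi_{x_ix_j}$ and the associativity of $\vee$ to compute
\[
\bigvee\bigl(\mathfrak{a}_{\pi}\cup\{\pi_{x_ix_j}\}\bigr)=\bigl(\textstyle\bigvee\mathfrak{a}_{\pi}\bigr)\vee\pi_{x_ix_j}=\pi\vee\pi_{x_ix_j}=\pi',
\]
and note that every atom on the left refines $\pi'$ (those in $\mathfrak{a}_{\pi}$ via $\pi\preceq\pi'$, and $\pi_{x_ix_j}$ by construction).

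For injectivity, I would assume $\Psi_{x_ix_j}(\mathfrak{a}_{\pi})=\Psi_{x_ix_j}(\widehat{\mathfrak{a}}_{\pi})$, i.e., $\mathfrak{a}_{\pi}\cup\{\pi_{x_ix_j}\}=\widehat{\mathfrak{a}}_{\pi}\cup\{\pi_{x_ix_j}\}$. By the crux observation, $\pi_{x_ix_j}$ belongs to neither $\mathfrak{a}_{\pi}$ nor $\widehat{\mathfrak{a}}_{\pi}$, so removing the common element from both sides yields $\mathfrak{a}_{\pi}=\widehat{\mathfrak{a}}_{\pi}$.

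For the non-overlap of ranges, I would suppose $\Psi_{x_ix_j}(\mathfrak{a}_{\pi})=\Psi_{y_iy_j}(\mathfrak{b}_{\pi})$ with $(x_i,x_j)\neq(y_i,y_j)$ and derive a contradiction. The equality places $\pi_{y_iy_j}$ in $\mathfrak{a}_{\pi}\cup\{\pi_{x_ix_j}\}$, but by the crux observation $\pi_{y_iy_j}\notin\mathfrak{a}_{\pi}$, forcing $\pi_{y_iy_j}=\pi_{x_ix_j}$; since $\textsc{b}_i\neq\textsc{b}_j$, the description of an atom by a pair with one element in $\textsc{b}_i$ and one in $\textsc{b}_j$ is unambiguous, so this coincidence of atoms is equivalent to $(x_i,x_j)=(y_i,y_j)$, a contradiction. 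I do not expect any serious obstacle; the argument rests entirely on the basic fact that atoms of a decomposition of $\pi$ must refine $\pi$, which rules out precisely the atom being adjoined. The only point deserving explicit care is the last one: one must invoke $\textsc{b}_i\neq\textsc{b}_j$ to ensure that distinct ordered pairs yield distinct atoms, since in general $\pi_{ab}=\pi_{ba}$.
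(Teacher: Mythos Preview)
Your argument is correct and follows essentially the same approach as the paper's proof: both establish injectivity by cancelling the adjoined atom from the equality $\mathfrak{a}_{\pi}\cup\{\pi_{x_ix_j}\}=\widehat{\mathfrak{a}}_{\pi}\cup\{\pi_{x_ix_j}\}$, and both prove disjointness of ranges by observing that an element of the range of $\Psi_{x_ix_j}$ contains $\pi_{x_ix_j}$ but not $\pi_{y_iy_j}$. Your write-up is in fact a bit more careful than the paper's, since you make explicit the reason the cancellation is legitimate (namely that $\pi_{x_ix_j}\notin\mathcal{A}(\pi)\supseteq\mathfrak{a}_{\pi}$), you verify directly that the image is an atomic decomposition of $\pi'$, and you spell out why $\pi_{x_ix_j}=\pi_{y_iy_j}$ forces $(x_i,x_j)=(y_i,y_j)$ using $\textsc{b}_i\cap\textsc{b}_j=\emptyset$.
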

	\begin{proof} To verify that $\Psi_{x_ix_j}$ is an injective function, let us consider two atomic decompositions of $\mathfrak{a}_{\pi}$ and $\hat{\mathfrak{a}}_{\pi}$ of the partition $\pi$ such that $\Psi_{x_ix_j}\left(\mathfrak{a}_{\pi}\right)=\Psi_{x_ix_j}\left(\hat{\mathfrak{a}}_{\pi}\right)$. By definition of $\Psi_{x_ix_j}$, 
		this implies that $\mathfrak{a}_{\pi}\cup \{\pi_{x_ix_j}\} = \hat{\mathfrak{a}_{\pi}}\cup \{\pi_{x_ix_j}\}$. Since both sets contain the same atoms, $\mathfrak{a}_{\pi}=\hat{\mathfrak{a}_{\pi}}$. Thus, $\Psi_{x_ix_j}$ is injective.
		
		To prove that  the ranges of $\Psi_{x_ix_j}$ and  $\Psi_{y_iy_j}$ do not overlap for a different pair of elements $y_i\in \textsc{b}_i$ and $y_j\in \textsc{b}_j$, observe the following: every atomic decomposition returned by $\Psi_{x_ix_j}$ contains the atom $\pi_{x_ix_j}$ but not the atom $\pi_{y_iy_j}$, and conversely, every atomic decomposition returned by $\Psi_{y_iy_j}$ contains the atom $\pi_{y_iy_j}$ but not the atom $\pi_{x_ix_j}$. Since no atomic decomposition can simultaneously contain both $\pi_{x_ix_j}$ and $\pi_{y_iy_j}$, the ranges of $\Psi_{x_ix_j}$ and  $\Psi_{y_iy_j}$ are disjoint.
	\end{proof}
	
	It is worthy to remark that not all minimal atomic decomposition of $\pi'$ can be obtained this way from a minimal atomic decomposition of $\pi$. Let $X=\{x_1,x_2,x_3,x_4,x_5,x_6,x_7\}$ and consider the partitions $\pi=\{\{x_1,x_2,x_3\},\{x_4,x_5,x_6\},\{x_7\}\}$  and $\pi'=\{\{x_1,x_2,x_3,x_4,x_5,x_6\},\{x_7\}\}$. Note that $\pi\sqsubset\pi'$. One can straightforwardly verify that $\mathfrak{a}_{\pi'}=\{\pi_{x_1x_4}, \pi_{x_1x_5},\pi_{x_2x_5}, \pi_{x_2x_6}, \pi_{x_3x_6}\}$ is a minimal atomic decomposition of $\pi'$ which cannot be returned by any of the maps $\Psi_{x_ix_j}$. In particular, note that none of the atoms in this decomposition refines $\pi$.
	
	The next corollary is basically a consequence of the fact that the composition of injective functions is an injective function as well. In spite of its simplicity, we decided to be emphatic about it here because we will need it later.

	\begin{Corollary}\label{compositionofaddinganatom} Let $\pi\prec\pi'$ and $\pi=\pi_0\sqsubset\pi_1\sqsubset\pi_2\sqsubset\ldots\sqsubset\pi_s=\pi'$ any maximal chain from $\pi$ into $\pi'$. If $\pi_{i}=\pi_{i-1} \vee \pi_{a_ib_i}$, then the composition $\Psi_{x_{a_s}x_{b_s}}\circ \Psi_{x_{a_{s-1}}x_{b_{s-1}}}\circ\ldots\circ \Psi_{x_{a_1}x_{b_1}}$ is an injection from the set of minimal atomic decomposition of $\pi$ into the set of minimal atomic decompositions of $\pi'$.
	\end{Corollary}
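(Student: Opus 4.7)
The plan is to iterate Proposition \ref{addinganatominjections} along the given maximal chain, supplementing it with the observation that each map $\Psi_{x_{a_i}x_{b_i}}$ actually sends minimal decompositions of $\pi_{i-1}$ to minimal decompositions of $\pi_i$, and then to invoke the elementary fact that a composition of injections is an injection.

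First, I would upgrade the conclusion of Proposition \ref{addinganatominjections} from ``atomic decomposition of $\pi_i$'' to ``minimal atomic decomposition of $\pi_i$.'' Concretely: if $\mathfrak{a}_{\pi_{i-1}}\in M(\pi_{i-1})$ and $\pi_i=\pi_{i-1}\vee \pi_{x_{a_i}x_{b_i}}$ with $x_{a_i}$ and $x_{b_i}$ in the two distinct blocks of $\pi_{i-1}$ that are merged to form $\pi_i$, then $\Psi_{x_{a_i}x_{b_i}}(\mathfrak{a}_{\pi_{i-1}})\in M(\pi_i)$. Via Theorem \ref{Bijection}, this amounts to the graph-theoretic assertion that adjoining the edge $\{x_{a_i},x_{b_i}\}$ to the spanning forest $\mathfrak{f}_{\mathfrak{a}_{\pi_{i-1}}}$ of $G_{\pi_{i-1}}$ yields a spanning forest of $G_{\pi_i}$: because $x_{a_i}$ and $x_{b_i}$ sit in two different connected components of the forest (namely the trees spanning the two blocks being merged), the resulting graph is still acyclic, and its connected components now correspond exactly to the blocks of $\pi_i$.

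With this in place, each $\Psi_{x_{a_i}x_{b_i}}$ restricts to an injection $M(\pi_{i-1})\to M(\pi_i)$, so the stated composition $\Psi_{x_{a_s}x_{b_s}}\circ\cdots\circ \Psi_{x_{a_1}x_{b_1}}$ is a well-defined map $M(\pi)\to M(\pi')$, and injectivity is inherited from the injectivity of each factor established in Proposition \ref{addinganatominjections}.

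The only mildly non-trivial step is the minimality upgrade above; everything else is purely formal. Once one accepts the dictionary of Theorem \ref{Bijection} between minimal atomic decompositions and spanning forests, the argument reduces to the observation that adding an edge between two distinct trees of a forest produces a forest again.
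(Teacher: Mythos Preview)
Your argument is correct and follows the same approach as the paper, which states only that the corollary ``is basically a consequence of the fact that the composition of injective functions is an injective function.'' You are in fact more careful than the paper: Proposition~\ref{addinganatominjections} as stated only asserts that the range of $\Psi_{x_ix_j}$ lies in the set of atomic decompositions of $\pi'$, not necessarily minimal ones, and you explicitly verify minimality via the forest picture of Theorem~\ref{Bijection}. The paper leaves this step implicit (it could also be recovered from Corollary~\ref{allminimalsamesize}, since adding one atom to a minimal decomposition of $\pi$ gives a decomposition of $\pi'$ of size $n-|\pi|+1=n-|\pi'|$, hence minimal).
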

	
	The previous results offer valuable insights into retrieving minimal atomic decompositions of partitions as we move vertically within the Hasse diagram of $\Pi(X)$. But what about horizontal movement? In other words, is there a relationship between the minimal atomic decompositions of partitions within the same layer of $\Pi(X)$? The following proposition addresses this question and provides some clarity.
	
	\begin{Proposition}\label{permutations}
		Let $\pi=\{\textsc{b}_1, \textsc{b}_2,\ldots,\textsc{b}_p\}$ and $\pi'=\{\textsc{b}'_1, \textsc{b}'_2,\ldots,\textsc{b}'_p\}$ be partitions of $X$ such that there exists a permutation $\tau$ of the index set $\{1,2,\ldots,p\}$ satisfying $n_i=n'_{\tau(i)}$ for every $i\in\{1,2,\ldots,p\}$. Then, for every permutation $\sigma:X\rightarrow X$ of the elements of $X$ such that, for any $x,x'\in X$, $x$ and $x'$ lie in the same block of $\pi$ if and only if $\sigma(x)$ and $\sigma(x')$ lie in the same block of $\pi'$,  the mapping $\Psi_{\sigma}$ which assigns to every minimal atomic decomposition $\mathfrak{a}_{\pi}$ of $\pi$  the minimal atomic decomposition $\mathfrak{a}^{\sigma}_{\pi'}$ of $\pi'$ defined by
		
		$$\pi_{xx'}\in \mathfrak{a}_{\pi} \text{ if, and only if, } \pi_{\sigma(x)\sigma(x')}\in \mathfrak{a}^{\sigma}_{\pi'}$$
		
		is a bijection from the set of all minimal atomic decompositions of $\pi$ onto the set of all minimal atomic decompositions of $\pi'$.
	\end{Proposition}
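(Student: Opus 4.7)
The plan is to reduce the claim to a graph-isomorphism statement via Theorem \ref{Bijection} and then observe that the permutation $\sigma$ induces a graph isomorphism $G_{\pi}\to G_{\pi'}$ that carries spanning forests to spanning forests. Before anything else, I would check that $\Psi_{\sigma}$ is well defined, i.e.\ that the set $\mathfrak{a}^{\sigma}_{\pi'}$ produced from a minimal atomic decomposition $\mathfrak{a}_{\pi}$ is a subset of $\mathcal{A}(\pi')$. This is immediate from the hypothesis on $\sigma$: if $\pi_{xx'}\in\mathfrak{a}_{\pi}\subseteq \mathcal{A}(\pi)$, then $x,x'$ lie in a common block of $\pi$, whence $\sigma(x),\sigma(x')$ lie in a common block of $\pi'$ and $\pi_{\sigma(x)\sigma(x')}\preceq\pi'$.

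Next I would show that $\mathfrak{a}^{\sigma}_{\pi'}$ is in fact minimal. Let $\mathfrak{f}=\mathfrak{f}_{\mathfrak{a}_{\pi}}$ be the spanning forest of $G_{\pi}$ associated to $\mathfrak{a}_{\pi}$ by Theorem \ref{Bijection}, and define $\sigma(\mathfrak{f})$ as the graph on vertex set $X$ with edge set $\{\{\sigma(x),\sigma(x')\}:\{x,x'\}\in E(\mathfrak{f})\}$. Since $\sigma$ is a bijection on $X$, the map $\mathfrak{f}\mapsto\sigma(\mathfrak{f})$ is a graph isomorphism, so $\sigma(\mathfrak{f})$ is acyclic and its connected components biject with those of $\mathfrak{f}$. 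Because every connected component of $\mathfrak{f}$ is a tree spanning some block $\textsc{b}_i$ of $\pi$, the block-preserving hypothesis on $\sigma$ forces its image to be a tree spanning $\sigma(\textsc{b}_i)$, which is a block of $\pi'$ (the matching of sizes $n_i=n'_{\tau(i)}$ is automatic from this). Hence $\sigma(\mathfrak{f})$ is a spanning forest of $G_{\pi'}$, and by the defining equivalence of $\Psi_{\sigma}$ it is precisely $\mathfrak{f}_{\mathfrak{a}^{\sigma}_{\pi'}}$. Applying Theorem \ref{Bijection} again, $\mathfrak{a}^{\sigma}_{\pi'}\in M(\pi')$.

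Finally, for bijectivity, I would exploit the symmetry of the construction. The permutation $\sigma^{-1}$ satisfies the same block-preserving condition with the roles of $\pi$ and $\pi'$ exchanged, so it yields a map $\Psi_{\sigma^{-1}}\colon M(\pi')\to M(\pi)$ by the identical recipe. The defining biconditional $\pi_{xx'}\in\mathfrak{a}_{\pi}\iff \pi_{\sigma(x)\sigma(x')}\in\mathfrak{a}^{\sigma}_{\pi'}$ (together with its analogue for $\sigma^{-1}$) makes the two compositions $\Psi_{\sigma^{-1}}\circ\Psi_{\sigma}$ and $\Psi_{\sigma}\circ\Psi_{\sigma^{-1}}$ equal to the identity, concluding the proof.

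The argument is essentially formal once Theorem \ref{Bijection} is in hand, and I do not expect any genuine obstacle. The only point requiring care is the verification that $\sigma$ sends the complete-graph connected components of $G_{\pi}$ bijectively onto those of $G_{\pi'}$; this uses both the block-to-block hypothesis on $\sigma$ and the fact that complete graphs on sets of equal cardinality are canonically isomorphic via any bijection of their vertex sets, which matches the block-size condition encoded by $\tau$.
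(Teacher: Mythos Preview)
Your proof is correct and follows essentially the same approach as the paper: both establish bijectivity by constructing the inverse map $\Psi_{\sigma^{-1}}$ and checking that the two compositions are identities. Your argument is actually more complete than the paper's, which omits the verification that $\Psi_{\sigma}$ lands in $M(\pi')$; your use of Theorem~\ref{Bijection} to transport spanning forests along the graph isomorphism induced by $\sigma$ fills that gap cleanly.
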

	\begin{proof}  The proof follows directly by constructing the inverse map of $\Psi_{\sigma}$. Define $\Psi_{\sigma^{-1}}$, which maps each minimal atomic decomposition $\mathfrak{a}_{\pi'}$ of $\pi'$ back to a minimal atomic decomposition $\mathfrak{a}^{\sigma^{-1}}_{\pi}$ of $\pi$, as follows:
		
		$$\pi_{xx'}\in \mathfrak{a}'_{\pi} \text{ if and only if } \pi_{\sigma^{-1}(x)\sigma^{-1}(x')}\in \mathfrak{a}^{\sigma}_{\pi},$$
		
		where $\sigma^{-1}$ is the inverse of $\sigma$. Clearly, $\Psi_{\sigma^{-1}}$ reverses the mapping of $\Psi_{\sigma}$, and thus $\Psi_{\sigma}$ is bijective.  
		
	\end{proof}

	\section{The main characteristic features of the function $\mathfrak{N}$. }\label{PropertiesOfN}
	
	We now derive a series of interesting properties that the function $\mathfrak{N}$ satisfies. For the sake of completeness, we also include some fundamental properties.  
	
	\begin{Theorem}
		\begin{description}
			\item[(\textrm{i})]  $\mathfrak{N}(\pi) = 0$ if and only if $\pi=\textrm{m}_X$. 
			\item[(\textrm{ii})] $\mathfrak{N}(\pi) = 1$ if and only if $\pi\neq \textrm{m}_X$ and has no blocks containing more than two elements of $X$. Especially, if $\pi$ is an atom, $\mathfrak{N}(\pi) = 1$.
			\item[(\textrm{iii})] If $\mathfrak{N}(\pi) = 1$, then $\pi$ has no less than $\lceil n/2\rceil$ blocks ($n$ denotes the number of elements in $X$).
			\item[(\textrm{iv})] Let $\pi = \{\textsc{b}_1, \textsc{b}_2, ..., \textsc{b}_p\}$ and $\pi' = \{\textsc{b}'_1, \textsc{b}'_2, ..., \textsc{b}'_q\}$ be arbitrary partitions of $X$ and let $n_i$ and $n'_j$ denote the number of elements in the $i$th block $\textsc{b}_i$ of $\pi$ and in the $j$th block $\textsc{b}'_j$ of $\pi'$, respectively. If $p = q$ and, there is a permutation $\lambda$ of $\mathbf{p}$ such that $n_i = n'_{\lambda(i)}$, then $\mathfrak{N}(\pi) = \mathfrak{N}(\pi')$. (The converse is not true.) In particular, $\mathfrak{N}$ is not an injection. 
			\item[(\textrm{v})] 
			If $\pi \sqsubset \pi'$ and $\textsc{b}_i$ and $\textsc{b}_j$ are the two blocks of $\pi$ needed to be merged to obtain $\pi'$, then $\mathfrak{N}(\pi') \geq n_i\cdot n_j\cdot \mathfrak{N}(\pi)$. The equality holds if and only if $\textsc{b}_i$ and $\textsc{b}_j$ are singleton blocks.
			\item[(\textrm{vi})] $\mathfrak{N}$ is an isotone function, i.e., $\pi \prec   \pi'$ implies $\mathfrak{N}(\pi) \leq \mathfrak{N}(\pi')$.
			\item[(\textrm{vii})] $\mathfrak{N}(\pi)=n^{n-2}$ if, and only if, $\pi=\textrm{g}_X$.
			\item[(\textrm{viii})] For every partition $\pi$ of $X$, $0\leq\mathfrak{N}(\pi)\leq n^{n-2}$. Thus, $\mathfrak{N}$ is a bounded function.
			\item[(\textrm{ix})] $\mathfrak{N}$ is a supermodular function, i.e., for all partitions $\pi$ and $\pi'$,
			\begin{equation}
			\mathfrak{N}(\pi) +\mathfrak{N}(\pi') \leq \mathfrak{N}(\pi\wedge \pi') +\mathfrak{N}(\pi\vee\pi').
			\end{equation}
			
			\item[(\textrm{x})]The function $d:\Pi(X)\times\Pi(X)\rightarrow \mathbb{R}$
			defined by
			\begin{equation}
			d(\pi,\pi')= \mathfrak{N}(\pi) +\mathfrak{N}(\pi') -2\mathfrak{N}(\pi\wedge \pi')
			\end{equation}
			is a metric on $\Pi(X)$.
		\end{description}
	\end{Theorem}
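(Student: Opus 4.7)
The plan is to derive items (i)--(viii) by direct manipulation of the closed formula $\mathfrak{N}(\pi)=\prod_{i=1}^{k_0}n_i^{n_i-2}$ (with the convention $\mathfrak{N}(\mathrm{m}_X)=0$), and to treat (ix) and (x) via the spanning-forest correspondence of Theorem \ref{Bijection}. For (i)--(iv) the key arithmetic fact is that $n_i^{n_i-2}\ge 1$, with equality iff $n_i\in\{1,2\}$. Then (i) reduces to the convention, (ii) says the product equals $1$ iff every non-singleton block has size $2$, (iii) follows because $a$ singletons and $b$ pair blocks force $|\pi|=a+b=n-b\ge\lceil n/2\rceil$, and (iv) is immediate from the formula's dependence only on the multiset of block sizes---with a counterexample to the converse given by two partitions of a $5$-element set of shapes $(1,2,2)$ and $(1,1,1,2)$, both satisfying $\mathfrak{N}=1$.

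For (v) I would argue block-wise via Cayley's formula: within the merged block of $\pi'$, the spanning trees of $K_{n_i+n_j}$ that contain exactly one edge across the bipartition $\textsc{b}_i|\textsc{b}_j$ are enumerated by $n_in_j\cdot n_i^{n_i-2}\cdot n_j^{n_j-2}$, a subset of the total Cayley count $(n_i+n_j)^{n_i+n_j-2}$, with equality iff no other crossing pattern is available, i.e., iff $n_i=n_j=1$. Factoring in the untouched blocks of $\pi$ gives (v). Items (vi) and (vii) then follow by iterating (v) along saturated chains, noting for (vii) that the last merge on any saturated chain from a $\pi\ne\mathrm{g}_X$ up to $\mathrm{g}_X$ (for $n\ge 3$) involves two blocks of combined size $\ge 3$, hence is strict; item (viii) combines (i) and (vii).

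The main obstacle is (ix). The strategy is an induction on $\mathrm{rank}(\pi\vee\pi')-\mathrm{rank}(\pi\wedge\pi')$: the comparable case yields equality, and for the inductive step I would exploit the semimodular ``diamond'' isomorphism $[\pi\wedge\pi',\pi]\cong[\pi',\pi\vee\pi']$ (valid whenever $\pi$ covers $\pi\wedge\pi'$), using the injections of Proposition \ref{addinganatominjections} and Corollary \ref{compositionofaddinganatom} to transport minimal decompositions across the diamond. The subtle point, where I expect most of the work to lie, is that the multiplicative factors $n_in_j$ appearing in (v) generally differ on the two sides of the diamond, since the merged blocks on the upper side absorb additional elements contributed by $\pi'$; consequently, the additive supermodular bookkeeping must carefully distinguish the strict-inequality case of (v) from the equality (singleton-merge) case.

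Finally, for (x), non-negativity of $d$ is immediate from (vi), symmetry is trivial, and the triangle inequality reduces after cancellation to
\[
\mathfrak{N}(\pi\wedge\pi')+\mathfrak{N}(\pi'\wedge\pi'')\le\mathfrak{N}(\pi\wedge\pi'')+\mathfrak{N}(\pi'),
\]
which I would deduce by applying (ix) to the pair $(\pi\wedge\pi',\,\pi'\wedge\pi'')$ together with the observations $(\pi\wedge\pi')\wedge(\pi'\wedge\pi'')\preceq\pi\wedge\pi''$ and $(\pi\wedge\pi')\vee(\pi'\wedge\pi'')\preceq\pi'$, and then invoking (vi). Identity of indiscernibles is the most delicate ingredient, since $\mathfrak{N}$ fails to be strictly isotone on singleton merges; I would attack it by propagating the equalities $\mathfrak{N}(\pi)=\mathfrak{N}(\pi\wedge\pi')=\mathfrak{N}(\pi')$ along saturated chains from $\pi\wedge\pi'$ up to both $\pi$ and $\pi'$ via the strict case of (v), forcing every cover step to contribute trivially and deducing that the two chains must both have length zero.
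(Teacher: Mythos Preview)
For (i)--(viii) your formula-based approach is more direct than the paper's combinatorial arguments and works fine. For (ix), your inductive sketch points in the same direction as the paper --- which also transports minimal decompositions along a saturated chain from $\pi\wedge\pi'$ to $\pi$ and the parallel (possibly collapsing) chain from $\pi'$ to $\pi\vee\pi'$ via the maps of Proposition~\ref{addinganatominjections} --- but you have not yet supplied the crucial ingredient: when a step on the $\pi'$-side collapses ($\pi'_{j-1}=\pi'_j$), the paper inserts an explicit ``repair'' map $\Delta_j$ that deletes a canonically chosen redundant atom so that the composite remains an injection into $M(\pi\vee\pi')$. Your acknowledgment that ``the multiplicative factors differ on the two sides of the diamond'' correctly locates the obstacle but does not resolve it; the induction as sketched does not close.

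The genuine gap is in (x). Your proposed argument for identity of indiscernibles --- that $\mathfrak{N}(\pi)=\mathfrak{N}(\pi\wedge\pi')=\mathfrak{N}(\pi')$ together with the strict case of (v) forces the chains from $\pi\wedge\pi'$ up to $\pi$ and to $\pi'$ to have length zero --- is false. Equality of $\mathfrak{N}$ along a saturated chain only forces each cover step to be a singleton--singleton merge (the non-strict case of (v)); it does \emph{not} force the chain to be trivial. Concretely, for $X=\{1,\dots,6\}$ take
\[
\pi=\bigl\{\{1,2\},\{3,4\},\{5\},\{6\}\bigr\},\qquad \pi'=\bigl\{\{1,2\},\{5,6\},\{3\},\{4\}\bigr\},
\]
so that $\pi\wedge\pi'=\bigl\{\{1,2\},\{3\},\{4\},\{5\},\{6\}\bigr\}$ and $\mathfrak{N}(\pi)=\mathfrak{N}(\pi')=\mathfrak{N}(\pi\wedge\pi')=1$, whence $d(\pi,\pi')=0$ with $\pi\ne\pi'$. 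Thus $d$ is only a pseudometric. The paper does not prove (x) directly but defers to the Barth\'elemy--Leclerc construction; that construction, however, produces a genuine metric only when the underlying isotone function is \emph{strictly} increasing along covers, precisely the property you correctly observed that $\mathfrak{N}$ lacks.
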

	
	\begin{proof}
		
		To verify  $(i)$, note that if $\pi\neq \textrm{m}_X$, then at least one block of $\pi$ is a nonsingleton, which implies that $\mathcal{A}(\pi)\neq \emptyset$. Thus, $\mathcal{A}(\pi)$ itself constitutes an atomic decomposition $\mathfrak{a}_{\pi}$ of $\pi$ and, as we have already noted, $\pi$ admits a minimal atomic decomposition. This proves that $\pi\neq \textrm{m}_X$ forces $\mathfrak{N}(\pi)>0$. Conversely, $\mathcal{A}(\textrm{m}_X)= \emptyset$, and therefore $\textrm{m}_X$ admits no atomic decomposition.
		
		Suppose now that a partition $\pi$ admits different minimal atomic decompositions. Lemma \ref{closedunderunion} ensures that the union of two of these decompositions is another atomic decomposition of $\pi$ which, according to Corollary \ref{not minimal}, is not minimal. As a consequence, this decomposition contains a sequence of redundant atoms $\pi_{x_0x_1}, \pi_{x_1x_2}, \ldots, \pi_{x_sx_0}$ with $s\geq 3$. By the definition of join, the elements $x_0, x_1,\ldots, x_s$ belong to the same block of $\pi$. Therefore, if a partition admits more than one minimal atomic decomposition, then this partition has a block with at least three elements. Conversely, let us assume that there is a block $\textsc{b}$ of $\pi$ containing $s\geq 3$ elements. Since the atoms belonging to a given atomic decomposition are collected blockwise, there is no loss of generality in assuming that $\textsc{b}$ is the only block of $\pi$. To fix ideas, let us suppose once again that the elements of $\textsc{b}$ have been enumerated. The decompositions $\mathfrak{a}_{\pi}=\{\pi_{x_0x_i}\,:\, 2\leq i\leq s\}$ and $\hat{\mathfrak{a}}_{\pi}=\{\pi_{x_ix_{i+1}}\,:\, 1\leq i\leq s-1\}$  are different minimal atomic decompositions of $\pi$.  We have thus proven $(ii)$.
		
		Property $(iii)$ follows immediately from the fact that the maximum number of pairings that we can form 
		with the elements of $X$ is $\lceil n/2\rceil$ blocks. So, this is the maximum number of 2-size blocks that a partition of $X$ can have. Since $\mathfrak{N}(\pi)=1$ limits the maximum size of a block to be 2, we have that, for every 2-size block that a partition discards, two singleton blocks are taken, which increases the number of blocks of the partition.
		
		In order to prove $(iv)$, let $\pi=\{\textsc{b}_1, \textsc{b}_2,\ldots,\textsc{b}_p\}$ and $\pi'=\{\textsc{b}'_1, \textsc{b}'_2,\ldots,\textsc{b}'_q\}$ be partitions of $X$ such that $p = q$ and there exists a permutation $\lambda$ of $\mathbf{p}$ such that $n_i = n'_{\lambda(i)}$, then $\mathfrak{N}(\pi) = \mathfrak{N}(\pi')$. 
		Under these conditions, Proposition \ref{permutations} guarantees the existence of a bijective map $\Psi_{\sigma}$, depending on an appropriate permutation $\sigma$ of the elements of $X$, that maps every minimal atomic decomposition of $\pi$ into a minimal atomic decomposition of $\pi'$.
		
		As for $(v)$, let $\pi \sqsubset \pi'$ and let $\textsc{b}_i$ and $\textsc{b}_j$ denote the two blocks of $\pi$ needed to be merged to obtain $\pi'$. Proposition \ref{addinganatominjections} ensures that the mapping $\Psi_{x_ix_j}$ which to every minimal atomic decomposition $\mathfrak{a}_{\pi}$ of $\pi$ assigns the minimal atomic decomposition $\mathfrak{a}_{\pi'}:=\mathfrak{a}_{\pi}\cup \{\pi_{x_ix_j}\}$ of $\pi'$, where $x_i\in \textsc{b}_i$ and $x_j\in \textsc{b}_j$, is an injection. Besides, this proposition asserts that the set of minimal atomic decompositions of $\pi'$ contains the disjunct union of the ranges of the injection $\Psi_{x_ix_j}$, considering all possible combinations of $x_i\in \textsc{b}_i$ and $x_j\in \textsc{b}_j$.  Since each of these ranges contains as many elements as minimal atomic decompositions $\pi$ admits, and there are $n_i\cdot n_j$ possible unordered pairs $x_i\in \textsc{b}_i$ and $x_j\in \textsc{b}_j$, we can conclude that $\mathfrak{N}(\pi') \geq n_i\cdot n_j\cdot \mathfrak{N}(\pi)$.
		
		If $\pi\prec \pi'$, then there is a sequence of partitions $\pi=\pi_0\sqsubset \pi_1 \sqsubset \ldots \sqsubset \pi_m=\pi$. Property $(vi)$ is now an immediate consequence of $(v)$.
		
		To see $(vii)$, we shall start by noticing that if $\pi\neq \textrm{g}_X$, then $\mathfrak{N}(\pi)< \mathfrak{N}(\textrm{g}_X)$. To this end, let us consider $\pi\sqsubset \textrm{g}_X$, and let $\textsc{b}_i$ and $\textsc{b}_j$ be those blocks of $\pi$ that should be merged in order to produce $\textrm{g}_X$. If $\pi\neq\textrm{m}_X$ (otherwise the result follows immediately), then $\textsc{b}_i$ and $\textsc{b}_j$ are not both singleton and, by virtue of $(v)$, $\mathfrak{N}(\textrm{g}_X)\geq n_i\cdot n_j\cdot\mathfrak{N}(\pi)>\mathfrak{N}(\pi)$. Let now $\pi'$ be an arbitrary partition of $X$. There is a partition $\pi\sqsubset\textrm{g}_X$ such that $\pi'\preceq\pi$. Using now Property $(vi)$, we can conclude that $\mathfrak{N}(\pi')\leq\mathfrak{N}(\pi)<\mathfrak{N}(\textrm{g}_X)$. It only remains to prove that $\mathfrak{N}(\textrm{g}_X)=n^{n-2}$, but this derives forthwith from formula (\ref{formula}).
		
		At this point, $(viii)$ follows directly from the fact that every partition $\pi$ satisfies $\textrm{m}_X\preceq \pi\preceq \textrm{g}_X$ and that $\mathfrak{N}$ is an isotone function.
		
		We now focus on $(ix)$, which requires a little bit more of elaboration. Let $\pi$ and $\pi'$ be arbitrary noncomparable partitions, and let us consider once again an increasing sequence $\pi\wedge\pi'=\pi_0\sqsubset\pi_1\sqsubset\pi_2\sqsubset\dots\sqsubset\pi_l=\pi$. We already know that there are atoms $\pi_{x_{a_i}x_{b_i}}$, $1\leq i\leq l$ such that $\pi_i=\pi_{i-1}\vee \pi_{x_{a_i}x_{b_i}}$.  Thus, 
		$\pi_0\vee\pi_{x_{a_1}x_{b_1}}\vee\pi_{x_{a_2}x_{b_2}}\sqsubset\dots\sqsubset\pi_0\vee\pi_{x_{a_1}x_{b_1}}\vee\pi_{x_{a_2}x_{b_2}}\vee\ldots\vee\pi_{x_{a_l}x_{b_l}}=\pi.$
		Note that for whichever atom $\pi_{x_{a_i}x_{b_i}}$, $\pi_{x_{a_i}x_{b_i}}$ cannot refine $\pi_{i-1}$. If that were the case, $\pi_{i-1}\vee \pi_{x_{a_i}x_{b_i}}$ would be equal to $\pi_{i-1}\neq \pi_{i}$. Consequently, $\pi_{x_{a_i}x_{b_i}}$ is also unable to refine $ \pi\wedge \pi'$. With this in hand, and using the fact that $\pi_{x_{a_i}x_{b_i}}$ refines $\pi$, we can assert that $\pi_{x_{a_i}x_{b_i}}$ is prevented from refining $\pi'$.
		Taking the join of every partition in the sequence above and $\pi'$, we obtain $\pi'=\pi'_0\sqsubset\pi'_1\sqsubseteq\dots\sqsubseteq\pi'_l=\pi\vee \pi'$, with $\pi'_i=\pi_i\vee \pi'$. This sequence can be rewritten as
		$\pi' \sqsubset\pi'\vee\pi_{x_{a_1}x_{b_1}} \sqsubseteq \pi'\vee\pi_{x_{a_1}x_{b_1}}\vee\pi_{x_{a_2}x_{b_2}}\sqsubseteq\dots\sqsubseteq\pi'\vee\pi_{x_{a_1}x_{b_1}}\vee\pi_{x_{a_2}x_{b_2}}\vee\ldots\vee\pi_{x_{a_l}x_{b_l}}=\pi'\vee \pi$. To simplify the notation, let us write $\textrm{s}= \{\pi_{x_{a_1}x_{b_1}},\pi_{x_{a_2}x_{b_2}},\ldots,\pi_{x_{a_l}x_{b_l}}\}$.

		We now come to the main idea. There is a natural mapping that identifies every minimal atomic decomposition $\mathfrak{a}_{\pi\wedge\pi'}$ of $\pi\wedge\pi'$ with the minimal atomic decomposition $\mathfrak{a}_{\pi}:=\mathfrak{a}_{\pi\wedge\pi'}\cup\textrm{s}$ of $\pi$. We can think of this mapping as the composition $\Psi_{x_{a_l}x_{b_l}}\circ\ldots \circ\Psi_{x_{a_2}x_{b_2}}\circ\Psi_{x_{a_1}x_{b_1}}$. A minimal atomic decomposition $\mathfrak{a}_{\pi_0}$ of $\pi_0$ is taken by $\Psi_{x_{a_1}x_{b_1}}$, which returns the atomic decomposition $\mathfrak{a}_{\pi_1}= \mathfrak{a}_{\pi}\cup \{\pi_{x_{a_1}x_{b_1}} \}$. Then $\mathfrak{a}_{\pi_1}$ is taken by $\Psi_{x_{a_2}x_{b_2}}$ which produces $\mathfrak{a}_{\pi_2}$, that is received by $\Psi_{x_{a_3}x_{b_3}}$. The process continue as such. Lemma \ref{addinganatominjections} ensures that every mapping $\Psi_{x_{a_i}x_{b_i}}$ is an injection, and so is their composition (see Corollary \ref{compositionofaddinganatom}).
		
		We can proceed in a similar manner with the minimal atomic decompositions of $\pi'$; however, if at any step in the sequence $\pi'=\pi'_0\sqsubset\pi'_1\sqsubseteq\dots\sqsubseteq\pi'_l=\pi\vee \pi'$ the equality holds, then the corresponding atom will be redundant. Let us suppose $\pi'_{j-1}=\pi'_{j}$. Thus, for whichever minimal atomic decomposition $\mathfrak{a}_{\pi_{j-1}}$ of $\pi_{j-1}$, the operation $\mathfrak{a}_{\pi_{j-1}}\cup \pi_{x_{a_j}x_{b_j}}$ yields a redundant atomic decomposition of $\pi_{j-1}$. Then there are atoms $\pi_{x_{a_j}x_{c_1}}, \pi_{x_{c_1}x_{c_2}}, \ldots, \pi_{x_{c_l}x_{b_j}}$, not all of which lie $\textrm{s}$. From among those atoms lying outside $\textrm{s}$, let us remove that $\pi_{x_{c_k},x_{c_{k+1}}}$ for which $c_k$ is the least (if various atoms share $x_{c_k}$ for the least $c_k$, we also minimize $c_{k+1}$.) Let us denote by $\Delta_j$ the mapping which performs such a remotion. We argue that $\Delta_j\circ\Psi'_{x_{a_j}x_{b_j}}$ is an injection from the set of all minimal atomic decompositions of the partition $\pi'_j$ into itself. Now we identify every minimal atomic decomposition $\mathfrak{a}_{\pi'}$ into the minimal atomic decomposition $\left(\Gamma_{l}\circ\ldots \circ\Gamma_{2}\circ\Gamma_{1}\right) (\mathfrak{a}_{\pi'})$ of $\pi\vee\pi'$, where   
		
		\[
		\Gamma_j = \left\{\begin{array}{lr}
		\Psi'_{x_{a_j}x_{b_j}}, & \text{if } \pi'_{j-1}\sqsubset\pi'_{j}\\
		\Delta_j\circ\Psi'_{x_{a_j}x_{b_j}}, & \text{if } \pi'_{j-1}=\pi'_{j}.\\
		\end{array}\right.
		\]
		
		These identifications are characterized by the following.
		\begin{itemize}
			\item every atomic minimal atomic decomposition $\mathfrak{a}_{\pi\wedge\pi'}$ of $\pi\wedge\pi'$ is assigned a minimal atomic decomposition $\mathfrak{a}_{\pi}$ of $\pi$ satisfying $\textrm{s} \subset\mathfrak{a}_{\pi}$  and every atom in $\mathfrak{a}_{\pi}-\textrm{s}$ refines $\pi\wedge\pi'$.
			\item every minimal atomic decomposition $\mathfrak{a}_{\pi'}$ of $\pi'$ is assigned a
		 minimal atomic decomposition $\mathfrak{a}_{\pi\vee\pi'}$ of $\pi\vee\pi'$ satisfying $\textrm{s} \subset\mathfrak{a}_{\pi\vee\pi'}$  and every atom in $\mathfrak{a}_{\pi\vee\pi'}-\textrm{s}$ refines $\pi'$.
		\end{itemize}
		
		Let $\mathfrak{a}_{\pi}$ be an arbitrary minimal atomic decomposition of $\pi$ that matches no minimal atomic decomposition of $\pi\wedge\pi'$ in the sense described above. Then there is an atom $\pi_{xx'}$ that  neither  belongs to $\textrm{s}$ nor refines $\pi\wedge \pi'$. Since $\pi_{xx'}$ does not refine $\pi\wedge\pi'$, $\pi_{xx'}$ does not refine $ \pi'$ either. Let us consider now a sequence $\pi\sqsubset\pi\vee\pi_{x_{r_1}x_{s_1}}\sqsubset\pi\vee\pi_{x_{r_2}x_{s_2}}\sqsubset\dots\sqsubset\pi\vee\pi_{x_{r_m}x_{s_m}}=\pi\vee\pi'.$ It is now straightforward to see that $\textsc{a}_{\pi\vee\pi'}:=\mathfrak{a}_{\pi}\cup \{\pi_{x_{r_1}x_{s_1}},\pi_{x_{r_2}x_{s_2}},\ldots,\pi_{x_{r_m}x_{s_m}}\}$ is a minimal atomic decomposition of $\pi\vee\pi'$. This decomposition $\textsc{a}_{\pi\vee\pi'}$ matches no minimal atomic decomposition of $\pi'$ because it contains $\pi_{xx'}$. The mapping that sends $\mathfrak{a}_{\pi}$ in $\textsc{a}_{\pi\vee\pi'}$ in an injection. Therefore,
		
		\begin{equation}\label{supermodularity}
		\mathfrak{N}(\pi) -  \mathfrak{N}(\pi\wedge \pi')\leq \mathfrak{N}(\pi\vee \pi')- \mathfrak{N}(\pi').
		\end{equation}
		
		We have thus proven that $\mathfrak{N}$ is a supermodular function.
		
		Property $(x)$ is a consequence of the construction presented in \cite{LeclercPartitions}.
		
	\end{proof}

Let us consider a simple example. Let \( X = \{x_1, x_2, x_3, x_4\} \), and consider the partitions
\(\pi = \{\{x_1,x_2\}, \{x_3,x_4\}\}\) and \(\pi' = \{\{x_1,x_3\}, \{x_2,x_4\}\}\).
We will verify the inequality
\[\mathfrak{N}(\pi \wedge \pi') + \mathfrak{N}(\pi') \le \mathfrak{N}(\pi) + \mathfrak{N}(\pi \vee \pi'),\]
by constructing injections \(\Psi: M(\pi \wedge \pi') \to M(\pi)\) and \(\Gamma_: M(\pi') \to M(\pi \vee \pi')\).

Notice first that
\(\pi \wedge \pi' = \{\{x_1\},\{x_2\},\{x_3\},\{x_4\}\}\) and \(\pi \vee \pi' = \{\{x_1,x_2,x_3,x_4\}\}\). Also,
the atoms of \( \Pi(X) \) are the six partitions with one block of size two:
\[
\begin{aligned}
&\pi_{x_1,x_2} = \{\{x_1,x_2\},\{x_3\},\{x_4\}\}, 
&&\pi_{x_1,x_3} = \{\{x_1,x_3\},\{x_2\},\{x_4\}\}, \\
&\pi_{x_1,x_4} = \{\{x_1,x_4\},\{x_2\},\{x_3\}\}, 
&&\pi_{x_2,x_3} = \{\{x_2,x_3\},\{x_1\},\{x_4\}\}, \\
&\pi_{x_2,x_4} = \{\{x_2,x_4\},\{x_1\},\{x_3\}\}, 
&&\pi_{x_3,x_4} = \{\{x_3,x_4\},\{x_1\},\{x_2\}\}.
\end{aligned}
\]

The respective sets of minimal decompositions for the partitions $\pi\wedge\pi'$, $\pi$, $\pi'$ and $\pi\vee\pi'$ are given below.
\[
\begin{aligned}
M(\pi \wedge \pi') &= \{\varnothing\}, \\[6pt]
M(\pi) &= \bigl\{\{\pi_{x_1,x_2},\,\pi_{x_3,x_4}\}\bigr\}, \\[6pt]
M(\pi') &= \bigl\{\{\pi_{x_1,x_3},\,\pi_{x_2,x_4}\}\bigr\}, \\[6pt]
M(\pi \vee \pi')
&= \Bigl\{ 
\{\pi_{x_1,x_2},\,\pi_{x_1,x_3},\,\pi_{x_1,x_4}\},\;
\{\pi_{x_1,x_2},\,\pi_{x_1,x_3},\,\pi_{x_2,x_4}\},\\
&\quad
\{\pi_{x_1,x_2},\,\pi_{x_1,x_3},\,\pi_{x_3,x_4}\},\;
\{\pi_{x_1,x_2},\,\pi_{x_1,x_4},\,\pi_{x_2,x_3}\},\\
&\quad
\{\pi_{x_1,x_2},\,\pi_{x_1,x_4},\,\pi_{x_3,x_4}\},\;
\{\pi_{x_1,x_2},\,\pi_{x_2,x_3},\,\pi_{x_2,x_4}\},\\
&\quad
\{\pi_{x_1,x_2},\,\pi_{x_2,x_3},\,\pi_{x_3,x_4}\},\;
\{\pi_{x_1,x_2},\,\pi_{x_2,x_4},\,\pi_{x_3,x_4}\},\\
&\quad
\{\pi_{x_1,x_3},\,\pi_{x_1,x_4},\,\pi_{x_2,x_3}\},\;
\{\pi_{x_1,x_3},\,\pi_{x_1,x_4},\,\pi_{x_2,x_4}\},\\
&\quad
\{\pi_{x_1,x_3},\,\pi_{x_2,x_3},\,\pi_{x_2,x_4}\},\;
\{\pi_{x_1,x_3},\,\pi_{x_2,x_3},\,\pi_{x_3,x_4}\},\\
&\quad
\{\pi_{x_1,x_3},\,\pi_{x_2,x_4},\,\pi_{x_3,x_4}\},\;
\{\pi_{x_1,x_4},\,\pi_{x_2,x_3},\,\pi_{x_2,x_4}\},\\
&\quad
\{\pi_{x_1,x_4},\,\pi_{x_2,x_3},\,\pi_{x_3,x_4}\},\;
\{\pi_{x_1,x_4},\,\pi_{x_2,x_4},\,\pi_{x_3,x_4}\}
\Bigr\}.
\end{aligned}
\]

To define \( \Psi_{x_{a_j},x_{b_j}}\), we choose a maximal chain 
\(
\pi_0 = \pi \wedge \pi' < \pi_1 < \pi_2 = \pi,\)
where
\(
\textrm{s}=\{\pi_{x_1,x_2}, \pi_{x_3,x_4}\}\),
\(\pi_1 = \pi_{0} \vee \pi_{x_1,x_2}\) and 
\(\pi_2 = \pi_{1} \vee \pi_{x_3,x_4}
\). Thus,
\[
\Psi_{x_{a_j},x_{b_j}}(\mathfrak{a}_{\pi_{i-1}}) = \mathfrak{a}_{\pi_{i-1}} \cup \{\pi_{x_{a_j},x_{b_j}}\}, 
\quad \text{for } i = 1,2.
\]
To define \( \Gamma_j \), we use the chain
\(
\pi'_0 = \pi' < \pi'_1 = \pi'_2 = \pi \vee \pi',
\)
with
\(
\pi'_0 = \{\{x_1,x_3\},\{x_2,x_4\}\}\), 
\(
\pi_1' = \pi_{0}' \vee \pi_{x_1,x_2}\) and 
\(
\pi_2' = \pi_{1}' \vee \pi_{x_3,x_4}
\).
First, we consider
\[
\Psi'_{x_{a_j},x_{b_j}}(\mathfrak{a}_{\pi'_{i-1}}) = \mathfrak{a}_{\pi'_{i-1}} \cup \{\pi_{x_{a_j},x_{b_j}}\}, 
\quad \text{for } i = 1,2.
\]
Now, according to the proof, at each index \(j\) we define 
\[
\Gamma_{j} =
\begin{cases}
\Psi'_{x_{a_j},x_{b_j}}, 
&\text{if }\pi'_{j-1} < \pi'_{j} \text{ (add atom }\pi_{x_{a_j},x_{b_j}}\!), \\[6pt]
\Psi'_{x_{a_j},x_{b_j}} \circ\Delta_{j}, 
&\text{if }\pi'_{j-1} = \pi'_{j} \text{ (remove one redundant atom).}
\end{cases}
\]
Since \(\pi'_0<\pi'_1\), it follows that 
\[
\Gamma_1 = \Psi'_{x_1,x_2}(\{\pi_{x_1,x_3}, \pi_{x_2,x_4}\}) 
= \{\pi_{x_1,x_3}, \pi_{x_2,x_4}, \pi_{x_1,x_2}\}.
\]
Now, adding \(\pi_{x_3,x_4}\) would create a non-minimal decomposition since \(\pi'_1=\pi'_2\). We apply a reduction map \(\Delta_2\) that removes a redundant atom not in \(s=\{\pi_{x_1,x_2},\pi_{x_3,x_4}\}\); for instance,
\[
\Delta_2(\{\pi_{x_1,x_3}, \pi_{x_2,x_4}, \pi_{x_1,x_2}\}) 
= \{\pi_{x_2,x_4}, \pi_{x_1,x_2}\}.
\]
Finally,
\[\begin{aligned}
\Gamma_2
&= \Psi'_{\pi_{x_3,x_4}}\bigl(\Delta_2(\{\pi_{x_1,x_3}, \pi_{x_2,x_4}, \pi_{x_1,x_2}\})\bigr)\\
&= \Psi'_{\pi_{x_3,x_4}}(\{\pi_{x_2,x_4}, \pi_{x_1,x_2}\})\\
&= \{\pi_{x_2,x_4}, \pi_{x_1,x_2}, \pi_{x_3,x_4}\}
\;\in\;M(\pi \vee \pi').
\end{aligned}\]

Hence, \(\Gamma=\Gamma_2\circ\Gamma_1\) is injective.

Putting it all together:
\[\begin{aligned}
\emptyset &= M(\pi \wedge \pi') 
\xrightarrow{\Psi} 
\bigl\{\{\pi_{x_1,x_2},\pi_{x_3,x_4}\}\bigr\} = M(\pi),\\
\{\{\pi_{x_1,x_3},\pi_{x_2,x_4}\}\} &= M(\pi') 
\xrightarrow{\Gamma} 
\{\pi_{x_2,x_4},\pi_{x_1,x_2},\pi_{x_3,x_4}\} \subset M(\pi \vee \pi').
\end{aligned}\]
Since all elements of \(M(\pi)\) are matched but \(M(\pi \vee \pi')\) has extra elements,
\[
\mathfrak{N}(\pi) - \mathfrak{N}(\pi \wedge \pi') \;\le\; \mathfrak{N}(\pi \vee \pi') - \mathfrak{N}(\pi'); \text{ i.e., }  1-1 < 16-1.
\]
	
	\section{Constrained atomic decompositions}
	
	Let us now consider a subset $\mathcal{R}\subseteq\mathcal{A}$ of atoms, referred to as red atoms. (We refer to this distinguished subset as ``red atoms" to highlight that our analysis focuses on a specific ---albeit arbitrary--- selection of atoms, rather than the entire set of atoms in the lattice.) The purpose of this section is to derive a recursive formula for $\bm{\pi}(X,j,s,\mathcal{R})$ ---the total number of partitions lying at the $j$th level of $\Pi(X)$ which can be expressed as the join of $s$ red atoms.
	
	 To this end, it is useful if we can determine first the number of partitions in $\Pi(X)$ that can be expressed as the join of red atoms. Henceforth, let $\Pi(X,\mathcal{R})$ denote the set of partitions of $X$ that can be generated with red atoms. In other words, the partition $\pi\in\Pi(X,\mathcal{R})$ if and only if there are red atoms $\pi_1,\pi_2,\ldots,\pi_k\in \mathcal{R}$ such that $\pi=\bigvee_{i=1}^k \pi_i$ (regardless the number of red atoms $k$ and the level at which $\pi$ lies).
	
	Let $\mathcal{S}(G_{\mathcal{R}})$ denote the set all whose elements are the spanning subgraphs of $G_{\mathcal{R}}$, and let us consider the equivalence relation $\sim$ on $\mathcal{S}(G_{\mathcal{R}})$ given by: for all $G,G'\in\mathcal{S}(G_{\mathcal{R}})$, $G\sim G'$ if and only if $\pi_{G}=\pi_{G'}$.	
	
	\begin{Proposition}\label{graphsequivalenceclassesgivepartitions}
		$\Pi(X,\mathcal{R})$ is equipotent to the quotient set $\mathcal{S}(G_{\mathcal{R}})/\sim$.
	\end{Proposition}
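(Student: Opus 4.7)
The plan is to exhibit an explicit bijection between the quotient $\mathcal{S}(G_{\mathcal{R}})/\!\sim$ and $\Pi(X,\mathcal{R})$ by using the map $G \mapsto \pi_G$ already introduced in the paper. Concretely, I would define
\[
\Phi : \mathcal{S}(G_{\mathcal{R}})/\!\sim \; \longrightarrow \; \Pi(X,\mathcal{R}), \qquad \Phi([G]) := \pi_G,
\]
and verify in turn that $\Phi$ is well defined, takes values in $\Pi(X,\mathcal{R})$, is surjective, and is injective.

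First I would observe that $\Phi$ is well defined: this is essentially tautological, since the equivalence relation $\sim$ was designed so that $G \sim G'$ iff $\pi_G = \pi_{G'}$, so the value $\pi_G$ does not depend on the representative of the class $[G]$ chosen. Next, I would check that $\Phi([G]) \in \Pi(X,\mathcal{R})$ for every $G \in \mathcal{S}(G_{\mathcal{R}})$: by construction $E_G \subseteq E_{G_{\mathcal{R}}}$, so every edge $\{x,x'\} \in E_G$ corresponds to an atom $\pi_{xx'} \in \mathcal{R}$, and $\pi_G = \bigvee\{\pi_{xx'} : \{x,x'\} \in E_G\}$ is therefore a join of red atoms.

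For surjectivity, I would take an arbitrary $\pi \in \Pi(X,\mathcal{R})$ and use the definition to write $\pi = \bigvee_{i=1}^{k}\pi_{x_i x'_i}$ with each $\pi_{x_i x'_i} \in \mathcal{R}$. Then the spanning subgraph $G = (X,\{\{x_i,x'_i\} : 1 \le i \le k\})$ of $G_{\mathcal{R}}$ satisfies $\pi_G = \pi$, so $\Phi([G]) = \pi$. For injectivity, if $\Phi([G]) = \Phi([G'])$ then $\pi_G = \pi_{G'}$, which by the very definition of $\sim$ means $G \sim G'$, i.e. $[G] = [G']$.

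There is essentially no deep obstacle here; the statement is the formal packaging of the fact that the relation $\sim$ is exactly the kernel of the map $G \mapsto \pi_G$ restricted to spanning subgraphs of $G_{\mathcal{R}}$, and that the image of this map coincides with $\Pi(X,\mathcal{R})$. The only point requiring minor care is the surjectivity step, where I need to make sure that the edges produced from a decomposition $\pi = \bigvee_{i} \pi_{x_i x'_i}$ with $\pi_{x_i x'_i} \in \mathcal{R}$ do lie in $E_{G_{\mathcal{R}}}$; this is immediate from the definition of $G_{\mathcal{R}}$ in analogy with the construction of $G_\pi$ used in Theorem \ref{Bijection}.
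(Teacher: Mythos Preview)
Your proof is correct and follows essentially the same approach as the paper: both arguments use the map $G \mapsto \pi_G$, verify that its image is $\Pi(X,\mathcal{R})$, and exploit that $\sim$ is precisely the kernel of this map. The only cosmetic difference is that the paper invokes the first isomorphism theorem to package well-definedness and injectivity, while you verify these directly, and the paper's surjectivity witness takes \emph{all} red atoms refining $\pi$ rather than just those from one chosen decomposition.
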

	\begin{proof}
		Let $G$ be a spanning subgraph graph of $G_{\mathcal{R}}=(X, E_{\mathcal{R}})$ and consider the partition $\pi_{G}$. Since the edges of $G_{\mathcal{R}}=(X, E_{\mathcal{R}})$ correspond to the atom in $\mathcal{R}$, $\pi_{G}$ can be expressed as the join of red atom. Let us consider now the map $\lambda: \mathcal{S}(G_{\mathcal{R}})\rightarrow \Pi(X,\mathcal{R})$ that to a spanning graph $G$ of $G_{\mathcal{R}}=(X, E_{\mathcal{R}})$ assigns the partition $\pi_{G}$.  We argue that $\lambda$ is surjective. Indeed, take any partition $\pi\in\Pi(X,\mathcal{R})$ and define the spanning subgraph $G$ of $G_{\mathcal{R}}=(X, E_{\mathcal{R}})$ whose edges correspond to those red atoms that refine $\pi$. Since we can recover $\pi$ by taking the join of such red atom, we can conclude $\lambda(G)=\pi$. The results now follows from the first isomorphism theorem.
	\end{proof}
	
	Now, we shall focus on computing the number of equivalence classes in $\mathcal{S}(G_{\mathcal{R}})/\sim$.
	
	\begin{Lemma}\label{forest}
		If $G_{\mathcal{R}}$ is a forest, then the quotient set $\mathcal{S}(G_{\mathcal{R}})/\sim$ consists of exactly $2^{|E_{\mathcal{R}}|}$ equivalence classes.
	\end{Lemma}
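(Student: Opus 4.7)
The plan is to show that when $G_{\mathcal{R}}$ is a forest, the equivalence relation $\sim$ on $\mathcal{S}(G_{\mathcal{R}})$ is trivial, i.e., every equivalence class is a singleton; then the count follows directly from the elementary fact that $|\mathcal{S}(G_{\mathcal{R}})|=2^{|E_{\mathcal{R}}|}$, since a spanning subgraph of $G_{\mathcal{R}}=(X,E_{\mathcal{R}})$ is obtained by choosing, independently for each edge in $E_{\mathcal{R}}$, whether to include it.

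First, I would recall the characterization provided by Corollary \ref{Lemmauniqueseqeunce}: two vertices $x,x'\in X$ lie in the same block of $\pi_G$ precisely when they are joined by a path in $G$. In particular, $\pi_G=\pi_{G'}$ if and only if $G$ and $G'$ have the same connected components as graphs on $X$.

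Next, I would take two spanning subgraphs $G=(X,E)$ and $G'=(X,E')$ of $G_{\mathcal{R}}$ with $\pi_G=\pi_{G'}$ and argue $E=E'$. Pick any edge $e=\{x,x'\}\in E$. Then $x$ and $x'$ lie in the same block of $\pi_G=\pi_{G'}$, so they lie in the same connected component of $G'$, hence there exists a simple path $P$ from $x$ to $x'$ in $G'$. Since $G'\subseteq G_{\mathcal{R}}$, this path $P$ is also a simple path from $x$ to $x'$ inside the forest $G_{\mathcal{R}}$. But in a forest any two vertices of the same component are joined by a \emph{unique} simple path, and the edge $e\in E_{\mathcal{R}}$ is itself such a simple path from $x$ to $x'$. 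Therefore $P=e$, which forces $e\in E'$. By symmetry $E=E'$, so $G=G'$.

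The main (and essentially only) obstacle is recognizing that the forest hypothesis is exactly what collapses $\sim$ to equality; all the counting is trivial once this is established. Combining the two observations, every $\sim$-class of $\mathcal{S}(G_{\mathcal{R}})$ is a singleton, so
\begin{equation*}
|\mathcal{S}(G_{\mathcal{R}})/\sim|\;=\;|\mathcal{S}(G_{\mathcal{R}})|\;=\;2^{|E_{\mathcal{R}}|},
\end{equation*}
which yields the claim.
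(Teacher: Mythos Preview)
Your proof is correct and follows essentially the same approach as the paper: both arguments show that $\sim$ is trivial on $\mathcal{S}(G_{\mathcal{R}})$ by invoking the uniqueness of simple paths in a forest, and then count spanning subgraphs as $2^{|E_{\mathcal{R}}|}$. The only cosmetic difference is that the paper argues the contrapositive (an edge in $E_1\setminus E_2$ forces $\pi_{G_1}\neq\pi_{G_2}$), whereas you argue directly that $\pi_G=\pi_{G'}$ forces $E=E'$; the key step---that any path in $G'\subseteq G_{\mathcal{R}}$ from $x$ to $x'$ must coincide with the edge $\{x,x'\}$---is identical.
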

	\begin{proof}
		We will start by proving that if $G_1,G_2\in\mathcal{S}(G_{\mathcal{R}})$ with edge sets $E_1$ and $E_2$, respectively, are spanning subgraphs of $G_{\mathcal{R}}$ for which there exists an edge $\{x_i, x_j\} \in E_{\mathcal{R}}$ satisfying $\{x_i, x_j\} \in E_1$ and $ \{x_i, x_j\}\notin E_2$, then $G_1$ and $G_2$ are not equivalent. 
		
		Indeed, since $G_{\mathcal{R}}$ is a forest, the only path connecting $x_i$ to $x_j$ in $G_{\mathcal{R}}$ is the edge $\{x_i, x_j\}$. This implies that if any spanning subgraph of $G_{\mathcal{R}}$ contains a path connecting $x_i$ to $x_j$, that path must be the edge  $\{x_i, x_j\}$. Therefore, there is a path in $G_1$ connecting $x_i$ to $x_j$ but $G_2$ fails to contain such a path. This means that $x_i$ and $x_j$ are in the same connected component of $G_1$ but in different connected components of $G_2$, which proves that the respective induced partitions $\pi_{G_1}$ and $\pi_{G_2}$ are different. Consequently, $G_1$ and $G_2$ are not equivalent. Thus, every spanning subgraph is just equivalent to itself, which means that the quotient set $\mathcal{S}(G_{\mathcal{R}})/\sim$ has as many equivalent classes as spanning subgraphs $G_{\mathcal{R}}$. It follows then that such a number is $2^{|E_{\mathcal{R}}|}$.
	\end{proof}
	
	\begin{Lemma}\label{cycle1}
		If $G_{\mathcal{R}}$ is a (simple) cycle, then, for any $G_1, G_2\in\mathcal{S}(G_{\mathcal{R}})$,  if $G_1\neq G_2$, and $|E_1| \leq |E_{\mathcal{R}}| - 2$ and $|E_2| \leq |E_{\mathcal{R}}| - 2$, then $G_1$ and $G_2$ are not equivalent.
	\end{Lemma}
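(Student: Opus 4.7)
The plan is to prove the contrapositive statement by exploiting the rigid structure of a cycle: between any two vertices of a simple cycle on $k=|E_{\mathcal{R}}|$ vertices, there are exactly two internally disjoint paths, and their lengths sum to $k$. I will translate the hypothesis $|E_i|\le k-2$ into the statement that at least two edges of the cycle are missing from each $G_i$, and then argue that this leaves too little room for the two subgraphs to compensate for their differences.

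First, I would fix $G_1,G_2\in\mathcal{S}(G_{\mathcal{R}})$ with $G_1\neq G_2$ and $|E_1|,|E_2|\le k-2$. Note that any vertex of $X$ that does not lie on the cycle $G_{\mathcal{R}}$ is isolated in both $G_1$ and $G_2$, so it contributes a singleton block to both $\pi_{G_1}$ and $\pi_{G_2}$ and plays no role in the argument. Since $G_1\neq G_2$, some edge $e=\{x,x'\}\in E_{\mathcal{R}}$ belongs to exactly one of them; without loss of generality, $e\in E_1\setminus E_2$. In $G_1$ the vertices $x,x'$ are joined by $e$, hence they lie in the same block of $\pi_{G_1}$.

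Next, I would argue that $x$ and $x'$ lie in different connected components of $G_2$. Because $G_{\mathcal{R}}$ is a simple cycle, the only two paths in $G_{\mathcal{R}}$ connecting $x$ to $x'$ are the single edge $e$ itself and the complementary arc of length $k-1$. Since $e\notin E_2$, any path from $x$ to $x'$ in $G_2$ must use every one of the $k-1$ edges of the complementary arc. The hypothesis $|E_2|\le k-2<k-1$ precludes this, so there is no $x$–$x'$ path in $G_2$, and hence $x$ and $x'$ belong to different blocks of $\pi_{G_2}$. Combined with the previous paragraph, this gives $\pi_{G_1}\neq\pi_{G_2}$, i.e., $G_1\not\sim G_2$.

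I expect the whole argument to be short; the only subtle point—what I would flag as the main obstacle—is isolating the right structural fact about cycles, namely that between any two cycle vertices the unique alternative to a given edge is a path that uses all $k-1$ remaining edges. Once this is made precise, the bound $|E_2|\le k-2$ translates directly into the impossibility of the alternative connection, and the conclusion follows. No recourse to $|E_1|\le k-2$ is strictly needed for this direction, but the symmetric bound is what allows the statement to be applied for either choice of distinguishing edge when invoked elsewhere.
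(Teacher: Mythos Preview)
Your argument is correct and in fact more streamlined than the paper's. The paper proceeds by a case split on whether $(E_{\mathcal{R}}\setminus E_1)\cap(E_{\mathcal{R}}\setminus E_2)$ is nonempty: if the two subgraphs share a missing edge, both embed in a proper spanning forest of the cycle and the paper invokes Lemma~\ref{forest}; if not, the paper picks two edges $e_1=\{x,x'\},e_2$ missing from $G_1$ (so $x,x'$ are disconnected in $G_1$) and uses the disjointness of complements to force $e_1\in E_2$, connecting $x,x'$ there. Your single-case argument---choose an edge $e\in E_1\setminus E_2$ and observe that the complementary arc has $k-1>|E_2|$ edges---avoids both the case split and the appeal to the forest lemma, at no cost. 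One minor wording point: your closing remark that the bound $|E_1|\le k-2$ is ``not strictly needed'' holds only after the WLOG relabeling; absent that, the symmetric hypothesis is exactly what handles the case $E_1\subsetneq E_2$, so it is needed for the lemma itself rather than merely for downstream applications.
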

	\begin{proof}
		We shall divide the proof in two cases.
		
		\emph{Case 1:} $E:=(E_{\mathcal{R}}-E_1)\cap (E_{\mathcal{R}}-E_2)\neq\emptyset$. Then the graph $\hat{G}_{\mathcal{R}}=(X, E_{\mathcal{R}}-E)$ is a forest, and $G_1$ and $G_2$ are also spanning subgraphs of $\hat{G}_{\mathcal{R}}$. Combining Lemma \ref{forest} with the fact that $G_1\neq G_2$, we conclude that $G_1$ and $G_2$ are not equivalent. 
		
		\emph{Case 2:} $E=\emptyset$. Let us suppose that $e_1=\{x,x'\}$ and $e_2 $ are two of the edges that were removed from $G_{\mathcal{R}}$ to obtain $G_1$, i.e., $e_1,e_2\notin E_1$. Then there is no path in $G_1$ connecting $x$ and $x'$. However, since $E=\emptyset$, $e_1\in E_2$ and hence there is a path in $G_1$ connecting $x$ and $x'$.
		This proves that $G_1$ and $G_2$ are not equivalent.
	\end{proof}
	
	The next result is obvious.
	
	\begin{Lemma}\label{cycle2}
		If $G_{\mathcal{R}}$ is a cycle, then every spanning subgraph $G=(X,E)$ of $G_{\mathcal{R}}$ such that $|E|\geq|E_{\mathcal{R}}|-1$  is equivalent to $G_{\mathcal{R}}$.
	\end{Lemma}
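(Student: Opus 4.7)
The plan is to argue by cases on $|E|$. Since $G_{\mathcal{R}}$ is a cycle on $|E_{\mathcal{R}}|$ vertices (say $y_1, y_2, \ldots, y_k$ with $k=|E_{\mathcal{R}}|$), the partition $\pi_{G_{\mathcal{R}}}$ consists of one block $\{y_1, \ldots, y_k\}$ together with singleton blocks for every vertex of $X$ not appearing in the cycle. The constraint $|E|\geq |E_{\mathcal{R}}|-1$ leaves only two possibilities to examine: either $|E|=|E_{\mathcal{R}}|$ or $|E|=|E_{\mathcal{R}}|-1$.

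In the first case, $E=E_{\mathcal{R}}$ forces $G=G_{\mathcal{R}}$, so trivially $\pi_G=\pi_{G_{\mathcal{R}}}$ and $G\sim G_{\mathcal{R}}$. In the second case, $G$ is obtained from $G_{\mathcal{R}}$ by removing exactly one edge of the cycle. Removing a single edge from a cycle of length $k$ yields a path on the same $k$ vertices, which is still connected. Hence the vertices $y_1, \ldots, y_k$ still lie in a common connected component of $G$, while the remaining vertices of $X$, which had no incident edges in $G_{\mathcal{R}}$, continue to be isolated in $G$. Consequently, the connected components of $G$ and of $G_{\mathcal{R}}$ coincide as subsets of $X$, so $\pi_G=\pi_{G_{\mathcal{R}}}$ and therefore $G\sim G_{\mathcal{R}}$.

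There is no genuine obstacle here, which is why the authors describe the result as obvious; the only point that deserves explicit mention is the observation that removing a single edge from a cycle preserves connectivity on the cycle's vertex set, which is exactly what forces equivalence under $\sim$.
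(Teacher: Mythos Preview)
Your proposal is correct and matches the paper's treatment: the authors simply declare the lemma obvious and give no proof, and your case analysis on $|E|$ together with the observation that deleting one edge from a cycle leaves a path (hence the same connected components) is exactly the trivial verification they have in mind.
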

	
	Combining Lemmas \ref{cycle1} and \ref{cycle2}, we get the following result.
	\begin{Proposition}\label{cycles}
		If $G_{\mathcal{R}}$ is a cycle, then the quotient set $\mathcal{S}(G_{\mathcal{R}})/\sim$ consists of exactly $2^{|E_{\mathcal{R}}|}-|E_{\mathcal{R}}|$ equivalence classes.
	\end{Proposition}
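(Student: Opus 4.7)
The plan is to combine Lemmas \ref{cycle1} and \ref{cycle2} by partitioning the full set $\mathcal{S}(G_{\mathcal{R}})$ of spanning subgraphs according to the number of edges, and then counting equivalence classes in each part.

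First I would recall that since $G_{\mathcal{R}}$ has $|E_{\mathcal{R}}|$ edges, the total number of spanning subgraphs of $G_{\mathcal{R}}$ equals $2^{|E_{\mathcal{R}}|}$, because each edge is independently either present or absent. I would then split $\mathcal{S}(G_{\mathcal{R}})$ into two disjoint pieces: the set $\mathcal{S}_{\mathrm{high}}$ of spanning subgraphs $G=(X,E)$ satisfying $|E|\geq |E_{\mathcal{R}}|-1$, and the set $\mathcal{S}_{\mathrm{low}}$ of those satisfying $|E|\leq |E_{\mathcal{R}}|-2$.

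Next I would count $|\mathcal{S}_{\mathrm{high}}|$. There is exactly one spanning subgraph with all $|E_{\mathcal{R}}|$ edges (namely $G_{\mathcal{R}}$ itself), and exactly $|E_{\mathcal{R}}|$ spanning subgraphs obtained by deleting a single edge; so $|\mathcal{S}_{\mathrm{high}}|=|E_{\mathcal{R}}|+1$. By Lemma \ref{cycle2}, every graph in $\mathcal{S}_{\mathrm{high}}$ is equivalent to $G_{\mathcal{R}}$, so $\mathcal{S}_{\mathrm{high}}$ is contained in a single equivalence class of $\sim$. On the other hand, by Lemma \ref{cycle1}, any two distinct elements of $\mathcal{S}_{\mathrm{low}}$ lie in different equivalence classes, so the restriction of $\sim$ to $\mathcal{S}_{\mathrm{low}}$ is the identity and each element of $\mathcal{S}_{\mathrm{low}}$ generates its own equivalence class.

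To conclude, I would verify that the single equivalence class containing $G_{\mathcal{R}}$ does not absorb any element of $\mathcal{S}_{\mathrm{low}}$: indeed, $\pi_{G_{\mathcal{R}}}$ is the partition whose unique non-singleton block is the whole vertex set of the cycle (since a cycle is connected), while any $G\in\mathcal{S}_{\mathrm{low}}$ is missing at least two edges of the cycle and hence is disconnected, so $\pi_{G}\neq \pi_{G_{\mathcal{R}}}$. Adding up, the number of equivalence classes is
\[
|\mathcal{S}_{\mathrm{low}}| + 1 \;=\; \bigl(2^{|E_{\mathcal{R}}|} - (|E_{\mathcal{R}}|+1)\bigr) + 1 \;=\; 2^{|E_{\mathcal{R}}|}-|E_{\mathcal{R}}|,
\]
which is the desired formula. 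The argument is essentially a bookkeeping step; the only subtlety, which is the one I would be careful about, is checking that the class containing $G_{\mathcal{R}}$ is genuinely disjoint from the singleton classes coming from $\mathcal{S}_{\mathrm{low}}$, so that no double-counting occurs.
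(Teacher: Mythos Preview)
Your proof is correct and follows essentially the same approach as the paper's: both split the spanning subgraphs according to whether at most one or at least two edges have been removed, apply Lemmas \ref{cycle1} and \ref{cycle2} to the two pieces, and then count. If anything, your version is slightly more careful, since you explicitly verify that no element of $\mathcal{S}_{\mathrm{low}}$ can be equivalent to $G_{\mathcal{R}}$ (a point the paper asserts but does not justify).
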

	
	\begin{proof}
		From Lemma \ref{cycle2}, we know that the equivalence class of $G_{\mathcal{R}}$ consists of all those spanning subgraphs of $G_{\mathcal{R}}$ obtained from $G_{\mathcal{R}}$ by removing at most one edge are equivalent, which add up $|E_{\mathcal{R}}|+1$. From Lemma \ref{cycle1}, every spanning subgraph of $G_{\mathcal{R}}$ obtained from $G_{\mathcal{R}}$ by removing more than one is only equivalent to itself. Since there is a total of $2^{|E_{\mathcal{R}}|}$ spanning subgraphs, there are $2^{|E_{\mathcal{R}}|}-|E_{\mathcal{R}}|-1$ of such equivalence classes. 
		Therefore, total number of equivalence classes is $2^{|E_{\mathcal{R}}|}-|E_{\mathcal{R}}|$. 
	\end{proof}
	
	\begin{Theorem}
		Let \( G_1, G_2, \dots, G_k\in \mathcal{S}(G_{\mathcal{R}})\), $G_i=(X,E_i)$, be pairwise edge-disjoint spanning subgraphs of \(G_{\mathcal{R}} \) such that:
		\begin{enumerate}
			\item  \( \sum_{i=1}^{k} G_i = G_{\mathcal{R}} \), i.e., $\bigcup_{i=1}^k E_i=E$. (Here, by the sum of two graphs $G=(V,E)$ and $H=(V',E')$ we mean the graph $G+H=(V\cup V', E\cup E')$.)
			\item  For every cycle $C$ in $G_{\mathcal{R}}$, there is $i\in\{1,2,\ldots,k\}$ such that $C$ is a subgraph of $G_i$.
		\end{enumerate}
		Then, $|\mathcal{S}(G_{\mathcal{R}})/\sim| = \prod_{i=1}^{k} |\mathcal{S}(G_i)/\sim|$
	\end{Theorem}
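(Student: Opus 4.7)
The plan is to exhibit a bijection $\Phi:\mathcal{S}(G_{\mathcal{R}})/{\sim}\to\prod_{i=1}^k \mathcal{S}(G_i)/{\sim}$ defined by $\Phi([G])=([G\cap G_1],\ldots,[G\cap G_k])$, where $G\cap G_i$ denotes the spanning subgraph of $G_i$ whose edge set is $E(G)\cap E_i$. Its set-theoretic inverse is the map $\Psi([H_1],\ldots,[H_k])=[\bigcup_{i=1}^k H_i]$. Once both maps are well-defined and mutually inverse, the desired cardinality equality is immediate.

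The map $\Psi$ is the easy direction. The identity $\pi_{\bigcup_i H_i}=\bigvee_i \pi_{H_i}$, which follows from the definition of the join in $\Pi(X)$, shows that $\Psi$ depends only on the classes $[H_i]$. For surjectivity, hypothesis (1) yields $G=\bigcup_i (G\cap G_i)$, so $\Psi([G\cap G_1],\ldots,[G\cap G_k])=[G]$; the edge-disjointness of the $G_i$'s then guarantees that $\Phi$ and $\Psi$ are mutually inverse on the set-theoretic level, provided $\Phi$ is indeed well-defined.

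The crux, and the expected main obstacle, is the well-definedness of $\Phi$: if $\pi_G=\pi_{G'}$ for $G,G'\in\mathcal{S}(G_{\mathcal{R}})$, I need $\pi_{G\cap G_j}=\pi_{G'\cap G_j}$ for every $j$. Here hypothesis (2) enters through the cycle space (the $\mathbb{F}_2$-vector space of edge subsets with all even degrees): since every simple cycle of $G_{\mathcal{R}}$ sits inside a single $G_i$, the standard decomposition of any element of $Z(G_{\mathcal{R}})$ as a symmetric difference of simple cycles yields $Z(G_{\mathcal{R}})=\bigoplus_{i=1}^k Z(G_i)$. Consequently, for every $Z\in Z(G_{\mathcal{R}})$ and every index $i$, the restriction $Z\cap E_i$ already belongs to $Z(G_i)$.

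To close the argument, fix $j$ and suppose for contradiction that $x,y\in X$ lie in a common component of $G\cap G_j$ via some path $P$, but in distinct components of $G'\cap G_j$. From $\pi_G=\pi_{G'}$ I obtain a path $P'$ from $x$ to $y$ inside $G'$. The symmetric difference $E(P)\triangle E(P')$ lies in $Z(G_{\mathcal{R}})$, so $Z_j:=(E(P)\triangle E(P'))\cap E_j$ lies in $Z(G_j)$. Because $E(P)\subseteq E_j$, a short computation yields $E(P')\cap E_j=E(P)\triangle Z_j$, and adding the even-degree set $Z_j$ to $E(P)$ preserves its vertex parities, which are odd exactly at $x$ and $y$. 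Hence $E(P')\cap E_j\subseteq E(G'\cap G_j)$ contains an $x$-$y$ walk, contradicting the separation of $x$ and $y$ in $G'\cap G_j$. The cycle-space bookkeeping in this last paragraph is the one delicate step; everything else is formal.
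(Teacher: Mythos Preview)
Your proof is correct. Both you and the paper establish the product formula by showing that the natural map $[G]\mapsto([G\cap G_1],\ldots,[G\cap G_k])$ is a bijection, and both identify the well-definedness of this map as the crux. The paper argues the key step directly: if $x,x'$ are joined by a path in $G\cap G_j$ but separated in $\hat G\cap G_j$, then any path in $\hat G$ joining them, together with the path in $G_j$, would produce a cycle of $G_{\mathcal{R}}$ which by hypothesis~(2) must lie in a single $G_i$, forcing it into $G_j$ and giving a contradiction. You instead pass through the cycle space over $\mathbb{F}_2$, observing that hypothesis~(2) yields $Z(G_{\mathcal{R}})=\bigoplus_i Z(G_i)$, and then run a parity computation on $E(P)\triangle E(P')$. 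Your route handles more cleanly the possibility that $P'$ weaves through several $G_i$'s (a point the paper treats somewhat informally), at the cost of invoking the cycle-space machinery; you are also more explicit than the paper about the inverse map $\Psi$ and its well-definedness via $\pi_{\bigcup_i H_i}=\bigvee_i\pi_{H_i}$.
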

	
	\begin{proof}
		Note that if $G'$ is a spanning subgraph of $G_{\mathcal{R}}$, then our hypothesis ensures that we can find $G'_1, \ldots, G'_k$ such that, for every $i \in\{ 1, 2,\ldots, k\}$,   $G'_i \subseteq G_i$, such that $\sum_{i=1}^{k} G'_i = G' $.
		
		Let us consider now two arbitrary spanning subgraphs $G'$ and $\hat{G}$ of $G_{\mathcal{R}}$, and let  \( \{G'_1, G'_2, \ldots, G'_k\} \) and \( \{\hat{G}_1, \hat{G}_2, \ldots, \hat{G}_k\} \) be their respective decompositions satisfying the condition above, i.e., for every \( i\in\{ 1, 2, \ldots, k\}\), \( G'_i \subseteq G_i \), \( \hat{G}_i \subseteq G_i \), and $\sum_{i=1}^{k} G'_i = G' $, $\sum_{i=1}^{k} \hat{G}_i = \hat{G}$. We argue that if there is an index \( j \in\{ 1, 2, \ldots, k\}\) such that \( G'_j \) are \( \hat{G}_j \) are not equivalent, then $G'$ and $\hat{G}$ are not equivalent either. Indeed, if \( G'_j \) are \( \hat{G}_j \) are not equivalent, then there are elements $x$ and $x'$ in $X$ which are connected by a path in one of these graphs but not in the other. Without loss of generality, let us suppose that $G'_j$ is the subgraph containing a path that connects $x$ and $x'$. This implies, that $x$ and $x'$ are connected by path in $G'$. Now, notice that $x$ and $x'$ cannot be connected by a path in $\hat{G}$ because such a path must be contained in one of the $\hat{G}_u$, for some $u\neq j$. But this either contradicts the fact that $\hat{G}_u\subset G_u$ which is edge-disjoint with $G_j$, so cannot contain the same path we used for $G'_j$, or, if it were a different path, the fact that every cycle of $G_{\mathcal{R}}$ is contained in one of the $G_i$'s.
		Therefore, $G'$ and $\hat{G}$ are not equivalent. The desired formula now follows immediately.
	\end{proof}
	
	\begin{Corollary} \label{corollaryedgedisjointcycles}
		If all the cycles $C_1, C_2 \ldots, C_k$ of \(G_{\mathcal{R}} \) are pairwise edge-disjoint, and $T$ is the spanning subgraph all whose edges are those that lie in no cycle of \(G_{\mathcal{R}} \), then \begin{equation}\label{disjointcycles}
		|\mathcal{S}(G_{\mathcal{R}})/\sim| = |\mathcal{S}(T)/\sim|\cdot\left|\prod_{i=1}^{k} |\mathcal{S}(C_i)/\sim\right|\end{equation} Moreover, by virtue of Lemma \ref{forest} and Proposition \ref{cycles}, Formula (\ref{disjointcycles}) can be rewritten as:
		\begin{equation}	|\mathcal{S}(G_{\mathcal{R}})/\sim|= 2^{\left(|\mathcal{R}| -\sum_{i=1}^{k} |E_i|\right)}  \cdot \prod_{i=1}^{k} (2^{|E_i|}-|E_i|), \end{equation} where $E_i$ denotes the edge set of the cycle $C_i$.
	\end{Corollary}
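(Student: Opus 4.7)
The plan is to derive the corollary as a direct application of the preceding theorem to the natural decomposition $G_{\mathcal{R}} = T + C_1 + \cdots + C_k$, and then to plug in the closed-form counts already established in Lemma \ref{forest} and Proposition \ref{cycles}.

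First I would verify that the hypotheses of the theorem are met by this decomposition. The pairwise edge-disjointness of $C_1, \ldots, C_k$ (assumed in the corollary) together with the defining property of $T$ as the subgraph consisting of exactly those edges lying in no cycle yields a partition of $E_{\mathcal{R}}$ into the edge sets of $T, C_1, \ldots, C_k$. Hence the $T$ and the $C_i$'s are pairwise edge-disjoint and their sum is $G_{\mathcal{R}}$, which gives the first hypothesis. For the second hypothesis, observe that, by assumption, the cycles of $G_{\mathcal{R}}$ are exactly $C_1, \ldots, C_k$, and each $C_i$ is trivially a subgraph of itself; no cycle can straddle the decomposition, since $T$ contains no edge on any cycle. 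Invoking the theorem then gives
\[
|\mathcal{S}(G_{\mathcal{R}})/\!\sim| \;=\; |\mathcal{S}(T)/\!\sim|\cdot \prod_{i=1}^{k} |\mathcal{S}(C_i)/\!\sim|,
\]
which is Formula~(\ref{disjointcycles}).

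The second step is pure substitution. Since $T$ contains no cycle edge, $T$ is a forest, and Lemma \ref{forest} gives $|\mathcal{S}(T)/\!\sim| = 2^{|E_T|}$, where by the edge partition $|E_T| = |\mathcal{R}| - \sum_{i=1}^{k} |E_i|$. Proposition \ref{cycles} gives $|\mathcal{S}(C_i)/\!\sim| = 2^{|E_i|} - |E_i|$ for each $i$. Multiplying yields the claimed simplified formula.

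The main obstacle, in my view, is conceptual rather than computational: one must notice that pairwise edge-disjointness of the cycles is exactly what ensures the cycle-containment hypothesis of the preceding theorem, because any cycle of $G_{\mathcal{R}}$ not wholly inside one of the $C_i$ would have to use an edge of $T$ (ruled out by construction of $T$) or straddle two cycles sharing an edge (ruled out by the hypothesis). Once this observation is articulated, the rest reduces to routine exponent bookkeeping over the edge partition.
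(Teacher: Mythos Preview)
Your proposal is correct and matches the paper's intended approach: the corollary is stated without proof precisely because it is the immediate specialization of the preceding theorem to the decomposition $G_{\mathcal{R}}=T+C_1+\cdots+C_k$, followed by substitution of the counts from Lemma~\ref{forest} and Proposition~\ref{cycles}. Your verification of the two hypotheses and the observation that $T$ is necessarily a forest are exactly the points one needs to make explicit.
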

	
	Corollary \ref{corollaryedgedisjointcycles} can be generalized as follows. Consider the equivalence relation on the set of all cycles of \(G_{\mathcal{R}} \) given by: Two cycles, $C$ and $C'$, are equivalent if there exists a sequence of cycles \(C, C_1, C_2, \ldots, C_m, C'\) such that any two consecutive cycles in the sequence have common edges (at least one). Henceforth, $[C]$ denotes the equivalence class of the cycle $C$ and $C^{\propto}$ denote the sum of the cycles equivalent to the class representative $C$.
	
	\begin{Corollary}
		Let $C_1, C_2 \ldots, C_k$ be all the cycles of \(G_{\mathcal{R}} \) and let $T$ be the spanning subgraph all whose edges are those that lie in no cycle of \(G_{\mathcal{R}} \), then \begin{equation}\label{anycycles}
		|\mathcal{S}(G_{\mathcal{R}})/\sim| = |\mathcal{S}(T)/\sim|\cdot\left|\prod_{i=1}^{k} |\mathcal{S}(C^{\propto}_i)/\sim\right|\end{equation}
	\end{Corollary}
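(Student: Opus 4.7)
The plan is to derive this corollary as a direct application of the preceding theorem, by producing an appropriate edge-disjoint decomposition of $G_{\mathcal{R}}$ indexed by the equivalence classes of cycles. Let $[C_{j_1}], [C_{j_2}], \ldots, [C_{j_r}]$ be the distinct equivalence classes of cycles of $G_{\mathcal{R}}$, choose one representative per class, and consider the family of spanning subgraphs $T, C_{j_1}^{\propto}, C_{j_2}^{\propto}, \ldots, C_{j_r}^{\propto}$, where each $C_{j_s}^{\propto}$ is the edge-union of all cycles in the class $[C_{j_s}]$.

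First I would verify that these subgraphs are pairwise edge-disjoint and together cover $G_{\mathcal{R}}$. Edge-disjointness of $T$ from each $C_{j_s}^{\propto}$ is automatic, since $T$ contains only edges lying in no cycle, while every edge of $C_{j_s}^{\propto}$ belongs to some cycle. Coverage is also immediate: an edge of $G_{\mathcal{R}}$ is either in no cycle (and thus in $T$) or in some cycle (and thus in the $C_{j_s}^{\propto}$ corresponding to its class). The crux is the disjointness of $C_{j_s}^{\propto}$ from $C_{j_t}^{\propto}$ when $s\neq t$: if an edge $e$ belonged to both, then $e$ would lie in some cycle $C\in[C_{j_s}]$ and some cycle $C'\in[C_{j_t}]$; since $C$ and $C'$ share the edge $e$, the trivial chain $C, C'$ witnesses $C\sim C'$, forcing $[C_{j_s}]=[C_{j_t}]$, a contradiction.

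Next I would verify the second hypothesis of the theorem: every cycle $C$ of $G_{\mathcal{R}}$ is entirely contained in one of the subgraphs in the family. Indeed, $C$ is a cycle of $G_{\mathcal{R}}$, so $C\in [C_{j_s}]$ for exactly one $s$, and then $C\subseteq C_{j_s}^{\propto}$ by the very definition of $C_{j_s}^{\propto}$. The theorem now yields
\[
|\mathcal{S}(G_{\mathcal{R}})/\sim| = |\mathcal{S}(T)/\sim|\cdot\prod_{s=1}^{r}|\mathcal{S}(C_{j_s}^{\propto})/\sim|,
\]
which is precisely the claimed formula, provided the product in the statement is read as ranging over one representative per equivalence class (since $C_i^{\propto}$ depends only on $[C_i]$).

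The main obstacle is the edge-disjointness of the blocks $C_{j_s}^{\propto}$ across distinct classes; everything else is either a definition (the construction of $T$) or an immediate consequence of the equivalence relation. Once that point is settled, the corollary reduces to a one-line invocation of the theorem. A minor clarification about the indexing of the product in the statement may also be warranted, since writing $\prod_{i=1}^{k}$ over all cycles rather than over a set of class representatives would overcount unless one tacitly collapses repeated factors.
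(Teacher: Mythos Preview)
Your proposal is correct and follows precisely the approach implicit in the paper, which states this result as an immediate corollary of the preceding theorem (without an explicit proof) after introducing the cycle equivalence relation and the blocks $C^{\propto}$ for exactly this purpose. Your remark that the product must be read over one representative per equivalence class is also apt, since the paper's indexing $\prod_{i=1}^{k}$ over all cycles is indeed loose notation for a product over the distinct classes.
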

	
	Given a collection $\mathcal{F}\subseteq \mathcal{S}(G_{\mathcal{R}})$ of spanning subgraphs of $G_{\mathcal{R}}$ and $e=\{x,x'\}\in E_{\mathcal{R}}$, i.e., $\pi_{xx'}\in \mathcal{R}$, we shall consider the following subcollections:
	
	\begin{eqnarray*}
		\mathcal{F}_e&:=&\{G=(X,E)\in\mathcal{F}\,:\, e\in E\};\\
		\mathcal{F}_{x|x'} &:=&\{G\in\mathcal{F}\,:\, x \text{ and } x' \text{ lie in different connected components of } G\}.
	\end{eqnarray*}
	
	\begin{Proposition}\label{RecursiveFormula} 	
		Let	$\mathcal{F}$ be a collection of spanning subgraphs of $G_{\mathcal{R}}$ and let $e\in E_{\mathcal{R}}$ be an edge for which $\mathcal{F}_e\neq\emptyset$. If for every graph $G\in\mathcal{F}$ containing a path $p$ connecting $x$ and $x'$ there is a graph $\hat{G}\in\mathcal{F}_e$ equivalent to $G$, then
		\begin{equation}
		|\mathcal{F}/\sim| = 	|\mathcal{F}_e/\sim|+ 	|\mathcal{F}_{x|x'}/\sim|.
		\end{equation}	
	\end{Proposition}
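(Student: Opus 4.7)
The plan is to partition $\mathcal{F}$ into two $\sim$-saturated pieces according to whether $x$ and $x'$ lie in the same connected component of the graph, and then identify one piece, modulo $\sim$, with $\mathcal{F}_e/\sim$. The key observation underlying everything is that $G\sim G'$ iff $\pi_G=\pi_{G'}$, and that the blocks of $\pi_G$ are exactly the vertex sets of the connected components of $G$. Consequently, the property ``$x$ and $x'$ lie in the same connected component'' is invariant under $\sim$.

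Write $\mathcal{F}^{xx'}:=\mathcal{F}\setminus \mathcal{F}_{x|x'}$ for the set of graphs in $\mathcal{F}$ in which $x$ and $x'$ are in the same component. By the invariance above, both $\mathcal{F}_{x|x'}$ and $\mathcal{F}^{xx'}$ are unions of $\sim$-classes, giving the disjoint-union identity
$$|\mathcal{F}/\sim| = |\mathcal{F}^{xx'}/\sim| + |\mathcal{F}_{x|x'}/\sim|.$$
It therefore suffices to prove $|\mathcal{F}^{xx'}/\sim| = |\mathcal{F}_e/\sim|$. Since any graph containing the edge $e=\{x,x'\}$ trivially has $x,x'$ in the same component, $\mathcal{F}_e\subseteq \mathcal{F}^{xx'}$, and we obtain a natural map
$$\Phi:\mathcal{F}_e/\sim \longrightarrow \mathcal{F}^{xx'}/\sim, \qquad [G]_{\mathcal{F}_e} \longmapsto [G]_{\mathcal{F}^{xx'}}.$$
Well-definedness and injectivity are both immediate from the fact that $\sim$ is defined pointwise (from $\pi_G$) and does not depend on the ambient collection: $G\sim G'$ holds or fails independently of whether we view the graphs inside $\mathcal{F}_e$ or $\mathcal{F}^{xx'}$.

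Surjectivity is exactly where the hypothesis of the proposition enters: given any $G\in\mathcal{F}^{xx'}$, the graph $G$ contains a path connecting $x$ and $x'$, so by assumption there exists $\hat{G}\in\mathcal{F}_e$ with $\hat{G}\sim G$, and therefore $\Phi([\hat{G}]_{\mathcal{F}_e})=[G]_{\mathcal{F}^{xx'}}$. Combining the bijection furnished by $\Phi$ with the previous count yields the desired formula.

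The main subtlety, more a matter of care than of difficulty, is to keep straight how $\sim$-classes restrict as we pass between $\mathcal{F}$, $\mathcal{F}^{xx'}$, and $\mathcal{F}_e$, and to ensure that the hypothesis is used to deliver surjectivity at the level of equivalence classes rather than merely at the level of individual graphs. Once one notes that $\sim$ depends solely on connectivity, both the saturation of the two pieces and the bijection above follow transparently.
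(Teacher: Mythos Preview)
Your proof is correct and follows essentially the same approach as the paper: both arguments split $\mathcal{F}$ according to whether $x$ and $x'$ lie in the same connected component, observe that this split is $\sim$-saturated, and then use the hypothesis to show that every class on the ``connected'' side has a representative in $\mathcal{F}_e$. Your version is slightly more explicit in setting up the bijection $\Phi$ and in justifying its injectivity, but the underlying idea is identical.
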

	\begin{proof}
		Notice first that for any graph $G'\in\mathcal{F}_{x|x'}$, the corresponding partition $\pi_{G'}$ places $x$ and $x'$ in different blocks, while the partition $\pi_{\hat{G}}$ corresponding to a graph $\hat{G}\in\mathcal{F}_e$ places $x$ and $x'$ in the same block. Thus, the every graph $G'\in\mathcal{F}_{x|x'}$ can only be equivalent to graphs lying in $\mathcal{F}_{x|x'}$. This implies that the equivalence class of a $G'\in\mathcal{F}_{x|x'}$ in the entire set $\mathcal{F}$ is the same as its equivalence class in the subset $\mathcal{F}_{x|x'}$. On the other hand, every graph $G\in\mathcal{F}$ containing a path $p$ that connects $x$ and $x'$ is equivalent to some graph $\hat{G}\in\mathcal{F}_e$, the remaining classes admit a representative from $\mathcal{F}_e$. This proves the result.
	\end{proof}
	
	\begin{Corollary}
		$\mathcal{S}(G_{\mathcal{R}})$ satisfies the premises of Proposition \ref{RecursiveFormula} for every edge $e=\{x,x'\}\in E_{\mathcal{R}}$. Therefore,
		\begin{equation}
		|\mathcal{S}(G_{\mathcal{R}})/\sim| = 	|\mathcal{S}(G_{\mathcal{R}})_e/\sim|+ 	|\mathcal{S}(G_{\mathcal{R}})_{x|x'}/\sim|.
		\end{equation}
	\end{Corollary}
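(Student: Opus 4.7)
The plan is to verify the three premises of Proposition \ref{RecursiveFormula} directly, then invoke the proposition. Let $e=\{x,x'\}\in E_{\mathcal{R}}$ be fixed. The first premise, that $\mathcal{S}(G_{\mathcal{R}})$ is a collection of spanning subgraphs of $G_{\mathcal{R}}$, holds by definition. The second premise, $\mathcal{S}(G_{\mathcal{R}})_e\neq\emptyset$, is immediate because $G_{\mathcal{R}}$ itself is a spanning subgraph of $G_{\mathcal{R}}$ that contains $e$.

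The only premise requiring actual argument is the third: for every $G=(X,E)\in \mathcal{S}(G_{\mathcal{R}})$ containing a path connecting $x$ and $x'$, there exists $\hat{G}\in\mathcal{S}(G_{\mathcal{R}})_e$ with $\hat{G}\sim G$. I would take $\hat{G}:=(X,E\cup\{e\})$. Since $e\in E_{\mathcal{R}}$, this $\hat{G}$ is a spanning subgraph of $G_{\mathcal{R}}$, and by construction it contains $e$, so $\hat{G}\in\mathcal{S}(G_{\mathcal{R}})_e$. To verify $\hat{G}\sim G$, I must show $\pi_{\hat{G}}=\pi_G$; equivalently, that $G$ and $\hat{G}$ have the same connected components. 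Any vertex pair connected in $G$ is clearly connected in $\hat{G}$. Conversely, any path in $\hat{G}$ that uses the new edge $e$ can be rerouted using the path in $G$ joining $x$ and $x'$ (which exists by hypothesis), so no pair of vertices becomes newly connected in $\hat{G}$ that was not already connected in $G$. Hence the induced partitions coincide and $G\sim \hat{G}$.

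With the three premises verified, the conclusion $|\mathcal{S}(G_{\mathcal{R}})/\!\sim|=|\mathcal{S}(G_{\mathcal{R}})_e/\!\sim|+|\mathcal{S}(G_{\mathcal{R}})_{x|x'}/\!\sim|$ is an immediate application of Proposition \ref{RecursiveFormula}. There is no real obstacle here; the entire content of the corollary is the observation that in $\mathcal{S}(G_{\mathcal{R}})$ we are free to adjoin $e$ to any spanning subgraph (because $e\in E_{\mathcal{R}}$), and doing so is equivalence-preserving precisely when $x$ and $x'$ are already connected. The only small care point is to phrase the rerouting argument cleanly so that the equality $\pi_{\hat{G}}=\pi_G$ is transparent.
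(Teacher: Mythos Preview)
Your proof is correct and is precisely the verification the paper leaves implicit: the corollary is stated there without proof, and your argument---adjoining $e$ to any $G$ in which $x$ and $x'$ are already connected and observing that this preserves the induced partition---is the natural and intended check of the third premise. There is nothing to add.
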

	
	By virtue of Proposition \ref{graphsequivalenceclassesgivepartitions}, the previous results can be rephrased in term of atomic decomposition. We shall focus on Proposition \ref{RecursiveFormula}. 
	
	Given a red atom $\pi_{xx'}\in\mathcal{R}$ and a collection $\mathcal{J}\subseteq \left(2^{\mathcal{R}}-\{\emptyset\}\right)$ of nonempty subsets of $\mathcal{R}$, we shall consider the following subcollections:
	
	\begin{eqnarray*}
		\mathcal{J}_{xx'}&:=&\{J \in\mathcal{J}\,:\, \pi_{xx'}\in J\};\\
		\mathcal{J}_{x|x'} &:=&\{J\in\mathcal{J}\,:\, \pi_{xx'}\text{ does not refine }\bigvee J\};\\
		\Pi(X,\mathcal{J})&:=&\{\pi\in\Pi(X)\,:\, \pi=\bigvee J \text{ for some } J\in\mathcal{J}\}
	\end{eqnarray*}
	
Note that $\Pi(X,\mathcal{R})= \Pi(X,\mathcal{J})$ for $\mathcal{J}= 2^{\mathcal{R}}-\{\emptyset\}$.
	
	\begin{Proposition}\label{RecursiveFormulaAtoms} 	
		Let	$\mathcal{J}$ be a collection of nonempty subsets of $\mathcal{R}$ and let $\pi_{xx'}\in \mathcal{R}$ be a red atom for which $\mathcal{J}_{xx'}\neq\emptyset$. If for every $J\in\mathcal{J}$ such that $\pi_{xx'}\preceq\bigvee J$ there is $\hat{J}\in\mathcal{J}_{xx'}$ such that $\bigvee J=\bigvee\hat{J}$, then
		\begin{equation}
		|\Pi(X,\mathcal{J})| = 	|\Pi(X,\mathcal{J}_{xx'})|+ 	|\Pi(X,\mathcal{J}_{x|x'})|.
		\end{equation}	
	\end{Proposition}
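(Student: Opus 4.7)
The plan is to exhibit $\Pi(X,\mathcal{J})$ as a disjoint union of $\Pi(X,\mathcal{J}_{xx'})$ and $\Pi(X,\mathcal{J}_{x|x'})$ and then invoke additivity of cardinalities. This essentially rephrases the argument of Proposition \ref{RecursiveFormula} in the atomic language, trading spanning subgraphs for subsets of $\mathcal{R}$ and the ``path from $x$ to $x'$'' condition for the ``$\pi_{xx'}\preceq \bigvee J$'' condition.

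First, I would observe that $\mathcal{J}_{xx'}\subseteq\mathcal{J}$ and $\mathcal{J}_{x|x'}\subseteq\mathcal{J}$, so that both $\Pi(X,\mathcal{J}_{xx'})$ and $\Pi(X,\mathcal{J}_{x|x'})$ are contained in $\Pi(X,\mathcal{J})$. Next, I would establish disjointness. If $\pi\in\Pi(X,\mathcal{J}_{xx'})$, then $\pi=\bigvee\hat{J}$ for some $\hat{J}\in\mathcal{J}_{xx'}$, and since $\pi_{xx'}\in\hat{J}$ we get $\pi_{xx'}\preceq\pi$. Conversely, if $\pi\in\Pi(X,\mathcal{J}_{x|x'})$, then $\pi=\bigvee J$ for some $J\in\mathcal{J}_{x|x'}$, and by definition of $\mathcal{J}_{x|x'}$ the atom $\pi_{xx'}$ does not refine $\pi$. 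The two alternatives $\pi_{xx'}\preceq\pi$ and $\pi_{xx'}\not\preceq\pi$ are mutually exclusive, so $\Pi(X,\mathcal{J}_{xx'})\cap\Pi(X,\mathcal{J}_{x|x'})=\emptyset$.

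The crux is to show the reverse inclusion $\Pi(X,\mathcal{J})\subseteq \Pi(X,\mathcal{J}_{xx'})\cup\Pi(X,\mathcal{J}_{x|x'})$, which is where the hypothesis of the proposition enters. Take any $\pi\in\Pi(X,\mathcal{J})$ and write $\pi=\bigvee J$ for some $J\in\mathcal{J}$. If $\pi_{xx'}\not\preceq\pi$, then $J$ satisfies ``$\pi_{xx'}$ does not refine $\bigvee J$'', hence $J\in\mathcal{J}_{x|x'}$ and so $\pi\in\Pi(X,\mathcal{J}_{x|x'})$. Otherwise $\pi_{xx'}\preceq \bigvee J=\pi$, and the standing hypothesis supplies an $\hat{J}\in\mathcal{J}_{xx'}$ with $\bigvee\hat{J}=\bigvee J=\pi$, whence $\pi\in\Pi(X,\mathcal{J}_{xx'})$. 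Combining the inclusions with disjointness gives
\[
|\Pi(X,\mathcal{J})|=|\Pi(X,\mathcal{J}_{xx'})|+|\Pi(X,\mathcal{J}_{x|x'})|.
\]

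The proof is essentially bookkeeping; the only ``real'' step is the appeal to the hypothesis in the case $\pi_{xx'}\preceq\pi$, which guarantees that every partition admitting a join-representation containing the atom $\pi_{xx'}$ implicitly also admits a join-representation explicitly using $\pi_{xx'}$. Without that assumption, a partition could belong to $\Pi(X,\mathcal{J})$ via a $J$ that happens to generate $\pi_{xx'}$ through a longer path of atoms in $\mathcal{R}$ without $\pi_{xx'}\in J$, and then the case analysis would fail to place $\pi$ in either $\Pi(X,\mathcal{J}_{xx'})$ or $\Pi(X,\mathcal{J}_{x|x'})$. So identifying this hypothesis as the right analog of the condition in Proposition \ref{RecursiveFormula} is the only conceptual obstacle; everything else is routine.
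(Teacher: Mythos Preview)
Your proof is correct and follows essentially the same approach as the paper's: both argue that $\Pi(X,\mathcal{J})$ decomposes as the disjoint union of $\Pi(X,\mathcal{J}_{xx'})$ and $\Pi(X,\mathcal{J}_{x|x'})$ by case analysis on whether $\pi_{xx'}\preceq\pi$, invoking the hypothesis precisely when $\pi_{xx'}\preceq\pi$ to land in $\Pi(X,\mathcal{J}_{xx'})$. You are simply more explicit about the forward inclusions and disjointness, which the paper leaves tacit.
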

	\begin{proof} Notice that if $\pi\in \Pi(X,\mathcal{J})$, then by definition  $\pi=\bigvee J$ for some $J\in\mathcal{J}$. If $\pi_{xx'}\preceq\pi$, by hypothesis, there is $\hat{J}\in\mathcal{J}_{xx'}$ such that $\pi=\bigvee J =\bigvee \hat{J}$. Thus, $\pi\in \Pi(X,\mathcal{J})$ that is refined by the atom $\pi_{xx'}$ actually belongs to $\Pi(X,\mathcal{J}_{xx'})$. On the other hand, if $\pi\in \Pi(X,\mathcal{J})$ is not refined by $\pi_{xx'}$, then $\pi\in\Pi(X,\mathcal{J}_{x|x'})$. Therefore, the proposition follows.
	\end{proof}

	Henceforth, we adopt a more detailed notation for certain subsets of $\Pi(X,\mathcal{R})$. We denote by $\Pi(X,\ast, s, \mathcal{R})$ the set of all partitions of $X$ that can be expressed as the join of exactly $s$ atoms in $\mathcal{R}$; that is  $\Pi(X,\ast, s, \mathcal{R})=\Pi(X,\mathcal{J})$, for $\mathcal{J}=\{J\subseteq \mathcal{R}\,:\, |J|=s\}$). Here, the symbol $\ast$ indicates that the level (or rank) at which the partition lie is arbitrary and unspecified. We further define $\Pi(X,j, s, \mathcal{R})$  as the subset of $\Pi(X,\ast, s, \mathcal{R})$ consisting of those partitions lying at the $j$th level of $\Pi(X)$; that is, partitions of rank $j$. 
	It is important to observe that the fact that $\pi\in\Pi(X,\ast, s, \mathcal{R})$ does not preclude $\pi$ from also belonging to $\Pi(X,\ast, v, \mathcal{R})$, for some $v\neq s$.  Analogously, we define the sets $\Pi(X, j, s, \mathcal{R})_{xx}$ and $\Pi(X, j, s,\mathcal{R})_{x|x}$. 
	
	\begin{Corollary}\label{CorollaryProposition4.4} Let $\pi_{xx'}\in \mathcal{R}$. Then,
		\begin{eqnarray}
		|\Pi(X, \ast, s, \mathcal{R})| &= &|\Pi(X, \ast, s, \mathcal{R})_{xx}|+|\Pi(X,\ast, s, \mathcal{R})_{x|x}|.\\
		|\Pi(X, j, s,\mathcal{R})|& = & |\Pi(X, j, s,\mathcal{R})_{xx}|+|\Pi(X, j, s,\mathcal{R})_{x|x}|.\\
		\nonumber
		\end{eqnarray}
	\end{Corollary}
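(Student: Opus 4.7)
The plan is to deduce both identities as direct instances of Proposition \ref{RecursiveFormulaAtoms}. For the first identity, I take $\mathcal{J}:=\{J\subseteq \mathcal{R}\,:\, |J|=s\}$, so that $\Pi(X,\mathcal{J})=\Pi(X,\ast,s,\mathcal{R})$ by definition, and observe that with this choice $\mathcal{J}_{xx'}$ corresponds exactly to $\Pi(X,\ast,s,\mathcal{R})_{xx'}$ and $\mathcal{J}_{x|x'}$ to $\Pi(X,\ast,s,\mathcal{R})_{x|x'}$. For the second identity, I refine the collection to $\mathcal{J}:=\{J\subseteq \mathcal{R}\,:\, |J|=s\ \text{and}\ \operatorname{rank}(\bigvee J)=j\}$, which renders $\Pi(X,\mathcal{J})=\Pi(X,j,s,\mathcal{R})$ and yields analogous correspondences for $\mathcal{J}_{xx'}$ and $\mathcal{J}_{x|x'}$.

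The only substantive task will be to verify the hypothesis of Proposition \ref{RecursiveFormulaAtoms}: for every $J\in\mathcal{J}$ with $\pi_{xx'}\preceq \bigvee J$, I need to produce some $\hat{J}\in\mathcal{J}_{xx'}$ satisfying $\bigvee \hat{J}=\bigvee J$. If $\pi_{xx'}\in J$ already, then $\hat{J}:=J$ works. Otherwise, Corollary \ref{Lemmauniqueseqeunce} supplies a sequence of distinct atoms $\pi_{x_0 x_1},\pi_{x_1 x_2},\ldots,\pi_{x_{k-1}x_k}\in J$ with $x_0=x$ and $x_k=x'$, and I would set
\[
\hat{J}:=\bigl(J\setminus\{\pi_{x_0 x_1}\}\bigr)\cup\{\pi_{xx'}\},
\]
so that $|\hat{J}|=|J|=s$ and $\pi_{xx'}\in\hat{J}$.

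It remains to confirm that this swap preserves the join. The inclusion $\bigvee \hat{J}\preceq \bigvee J$ is immediate since $\pi_{xx'}\preceq \bigvee J$ and $J\setminus\{\pi_{x_0 x_1}\}\subseteq J$. For the reverse, the atoms $\pi_{xx'}=\pi_{x_0 x_k},\pi_{x_{k-1}x_k},\pi_{x_{k-2}x_{k-1}},\ldots,\pi_{x_1 x_2}$, all lying in $\hat{J}$, form a path from $x_0$ to $x_1$; Corollary \ref{Lemmauniqueseqeunce} then yields $\pi_{x_0 x_1}\preceq \bigvee \hat{J}$, and combined with $J\setminus\{\pi_{x_0 x_1}\}\subseteq \hat{J}$ this gives $\bigvee J\preceq \bigvee\hat{J}$, hence equality. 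In particular, $\operatorname{rank}(\bigvee \hat{J})=\operatorname{rank}(\bigvee J)$, so the same $\hat{J}$ works for the rank-restricted family as well. I expect this rerouting argument to be the only delicate point; once it is in place, both identities drop out of Proposition \ref{RecursiveFormulaAtoms} with no additional work.
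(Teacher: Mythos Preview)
Your proof is correct and follows essentially the same approach as the paper: both reduce to Proposition~\ref{RecursiveFormulaAtoms} by verifying its hypothesis via the same ``add $\pi_{xx'}$, remove one edge of the resulting cycle'' swap. The paper phrases the swap in graph language (add the edge $e=\{x,x'\}$ to $G_J$, obtain a cycle, delete some other edge $e'$ of that cycle), while you phrase it through Corollary~\ref{Lemmauniqueseqeunce} and an explicit atom sequence, but the content is identical; your handling of the rank-restricted case via the refined family $\mathcal{J}=\{J\subseteq\mathcal{R}:|J|=s,\ \operatorname{rank}(\bigvee J)=j\}$ is in fact slightly more explicit than the paper's.
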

	
	\begin{proof}
		Let $J$ be an arbitrary size-$s$ subset of $\mathcal{R}$. We claim that, for every atom $\pi_{xx'}\in \mathcal{R}$, if $\pi_{xx'}\preceq\bigvee J$, then there is $\hat{J}\subseteq \mathcal{R}$ such that $|\hat{J}|=s$, $\pi_{xx'}\in \hat{J}$, and $\bigvee J=\bigvee\hat{J}$. Indeed, if $\pi_{xx'}\in J$, we may simply take  $\hat{J}:=J$. Otherwise, let $G_J$ be the graph whose vertex set is $X$ and whose edges correspond to the atoms in $J$. Since $\pi_{xx'}\preceq\bigvee J$, there is a path $p$ in $G_J$ connecting $x$ and $x'$. Consider the graph $G_{J'}$ obtained from $G$ by adding the edge $e=\{x,x'\}$; that is, setting $J'=J\cup\{e\}$.  Observe that $G_{J'}$ has exactly one more edge than $G_J$ and contains the cycle formed by the path $p$ together with the edge $e$. Let $e'\neq e$ be another edge of this cycle. Then, setting $\hat{J}=J'-\{e'\}$, the graph $G_{\hat{J}}$ obtained from $G_{J'}$ by removing the edge $e'$ has as many edges as $G_J$. In addition, since $G_{\hat{J}}$ maintains the same connectivity as $G_J$,  so that $\pi_{G_J}=\pi_{G_{\hat{J}}}$. Letting $\hat{J}$ be the subset of $\mathcal{R}$ corresponding to the edges of $G_{\hat{J}}$, we conclude that $\bigvee J=\bigvee\hat{J}$. The result now follows from Proposition \ref{RecursiveFormulaAtoms}.
	\end{proof}
	
	We shall conclude this section by establishing a recursive formula for the  $\bm{\pi}(X,j,s,\mathcal{R}):=|\Pi(X,j,s,\mathcal{R})|$. Here, recursive formula refers to expressing $\bm{\pi}(X,j,s,\mathcal{R})$  as linear combination of instances of the same formula involving smaller values for the parameters, where at least one of the following reductions occurs: (1) the set $X$ is reduced; (2) the set $\mathcal{R}$ is reduced; (3) the number if $j$ is reduced; and (4) the number $s$ is reduced. 
	
	Let us assume that $\pi_{xx'}\in\mathcal{R}$ is an arbitrary red atom. 
	If, in addition to the edge $e=\{x,x'\}$, there exist  distinct paths $p_1,p_2,\ldots,p_t$ in $G_{\mathcal{R}}$, each connecting $x$ and $x'$ with respective lengths $l_1\leq l_2\leq\ldots\leq l_t$, then for a $t$-dimensional multi-index $\alpha=(\alpha_1,\alpha_2,\ldots,\alpha_t)$, where $1\leq \alpha_v\leq l_v$ for each $1\leq v\leq t$, we construct the set $U_{\alpha}$ by selecting, from each path $p_v$, the edge at position $\alpha_v$. The goal is to construct a minimal cut $U_{\alpha}$ in which every path in interrupted exactly once. If an edge is removed from a path, and that same edge belongs to another path under consideration, then the latter path must not be interrupted again. Thus, $U_{\alpha}$ contains at most $t$ different edges. 
	
	Now, if $\alpha$ is a $ut$-dimensional multi-index, that is, $\alpha=(\beta_1,\beta_2,\ldots,\beta_u)$, where each $\beta_z$, $1\leq z\leq u$, is a $t$-dimensional multi-index, then we define $U_{\alpha}=\bigcup_{z}U_{\beta_z}$. W will denote the dimension of any multi-index $\alpha$ by $d(\alpha)$.
	
	Furthermore, given $W\subseteq \mathbf{t}$, a subset of path indexes,  define $G^W_{\mathcal{R}}$ as the graph obtained from $G_{\mathcal{R}}$ by collapsing the entire union $\bigcup_{w\in W}p_w$ of the paths $p_w$, $w\in W$, into a single new vertex $x_W$. That is, all vertices and all edges that belong to any of the path $p_w$, $w\in W$, are identified with the vertex $x_W$.  Every edge $\{a,b\}$ in $G_{\mathcal{R}}$ such that $b$ is a vertex of $\bigcup_{w\in W}p_w$, is replaced in $G^W_{\mathcal{R}}$ by the edge $\{a,x_w\}$, redirecting all connections to the collapsed component. Every edge in $G_{\mathcal{R}}$ that is disjoint from the union of paths $\bigcup_{w\in W}p_w$ is retained unchanged in $G^W_{\mathcal{R}}$.
	
	Let $X_W$ denote the vertex set of $G_{\mathcal{R}}^W$, and let $\mathcal{R}_W$ be the set of atoms of $\Pi(X_W)$ that correspond to the edges of $G^W_{\mathcal{R}}$. Additionally, we denote by $\pi_W$ the join of the atom of $\Pi(X)$ that correspond to the edges of $\bigcup_{w\in W}p_w$, and by $s_W$ the total number of such edges.
	
	\begin{Theorem}  $\bm{\pi}(X, 0,0,\mathcal{R})=1$, and for every $j>s$, $\bm{\pi}(X, j,s,\mathcal{R})=0$. Additionally, for $j\leq s$ and any atom $\pi_{xx'}\in\mathcal{R}$, we have:
		
		\begin{enumerate}
			\item If the only path in $G_{\mathcal{R}}$ connecting $x$ and $x'$ is the edge $e=\{x,x'\}$, then
			\begin{equation}
			 \bm{\pi}(X,j,s,\mathcal{R})=\bm{\pi}(X,j-1,s-1,\mathcal{R}')+ \bm{\pi}(X,j,s,\mathcal{R}'), 
			\end{equation}
			where $\mathcal{R}':=\mathcal{R}-\{\pi_{xx'}\}$.
			\item If, in addition to the edge $e=\{x,x'\}$, there are  different paths $p_1,p_2,\ldots,p_t$ whose lengths are $l_1\leq l_2\leq\ldots\leq l_t$, respectively, connecting $x$ and $x'$ in $G_{\mathcal{R}}$, then
			\begin{enumerate}
				\item $\bm{\pi}(X,j,j,\mathcal{R})=\bm{\pi}(X,j-1,j-1,\mathcal{R}')+ \bm{\pi}(X,j,j,\mathcal{R}')$, for $j=s\leq l_1$.\\
				\item For $j= s> l_1$,
				\begin{align*}
				\bm{\pi}(X, j, j, \mathcal{R}) = \sum_{u=1}^{\prod_{v=1}^t l_v} \sum_{\substack{\alpha \\ d(\alpha) = ut}} (-1)^{u+1} \Big[ 
				& \bm{\pi}(X, j-1, j-1, \mathcal{R}' - U_{\alpha}) \\
				& + \bm{\pi}(X, j, j, \mathcal{R}' - U_{\alpha}) 
				\Big].
				\end{align*} 
				
				\item For $j= s> l_1$, \begin{align*}
				\bm{\pi}(X, j, s, \mathcal{R}) = 
				& \sum_{\emptyset \neq W \subseteq \mathbf{t}} \sum_u \sum_{\substack{\gamma \\ d(\gamma) = u(t - |W|)}} (-1)^{u+1} 
				\bm{\pi}(X_W, j_W, s - s_W, \mathcal{R} - U_{\gamma}) \\
				+\, & \sum_u \sum_{\substack{\gamma \\ d(\gamma) = u(t - 1)}} (-1)^{u+1} 
				\bm{\pi}(X_e, j - 1, s - 1, \mathcal{R}' - U_{\gamma}) \\
				+\, & \sum_u \sum_{\substack{\alpha \\ d(\alpha) = ut}} (-1)^{u+1} 
				\bm{\pi}(X, j, s, \mathcal{R}' - U_{\alpha}),
				\end{align*}
				where $j_W=n_W-(n-j)$.
					\end{enumerate}
		\end{enumerate}
		
	\end{Theorem}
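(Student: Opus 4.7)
The plan is to apply Corollary \ref{CorollaryProposition4.4} with the chosen red atom $\pi_{xx'}$ as the point of departure, reducing every case to a careful analysis of which generating sets in $\mathcal{R}$ produce a partition that $\pi_{xx'}$ refines. The two base cases are immediate: the empty join yields $\mathrm{m}_X$, and since each red atom raises the rank by at most one, $s<j$ precludes reaching rank $j$.

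For Case 1, I would argue that $e=\{x,x'\}$ being the unique path in $G_{\mathcal{R}}$ connecting $x$ and $x'$ forces any generating set whose join is refined by $\pi_{xx'}$ to contain $\pi_{xx'}$ itself; indeed, by Corollary \ref{Lemmauniqueseqeunce}, $\pi_{xx'}\preceq\bigvee J$ requires a path in the graph associated with $J$, and $e$ is the only such path available. This gives a bijection $\Pi(X,j,s,\mathcal{R})_{xx'}\leftrightarrow\Pi(X,j-1,s-1,\mathcal{R}')$ by removing $\pi_{xx'}$ from the generating set (automatic rank drop by one, since $x,x'$ must lie in distinct blocks before joining). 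Conversely, $\Pi(X,j,s,\mathcal{R})_{x|x'}=\Pi(X,j,s,\mathcal{R}')$ since a generating set disjoint from $\pi_{xx'}$ cannot connect $x$ and $x'$.

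Case 2(a) reduces to Case 1 in essence: when $j=s\leq l_1$, the generating atoms form a spanning forest (as $s=n-|\pi|$), and any alternative path has length $\geq l_1\geq s$, consuming the entire budget of atoms with no flexibility to introduce cycles. So no partition refined by $\pi_{xx'}$ admits a generating set avoiding $\pi_{xx'}$ while preserving both the atom count and the rank, and the same recursion as Case 1 applies. For Cases 2(b) and 2(c), the alternative paths $p_1,\ldots,p_t$ genuinely interfere: a partition with $x,x'$ in the same block may be generated by using the entirety of some path $p_v$ instead of $\pi_{xx'}$. I would handle this by inclusion-exclusion: for each subset $W\subseteq\mathbf{t}$ of paths simultaneously ``fully included'' in the generating set, count the partitions produced by contracting $\bigcup_{w\in W}p_w$ in $G_{\mathcal{R}}$ (giving $X_W$, $\mathcal{R}_W$, $\pi_W$, $s_W$, and residual rank $j_W=n_W-(n-j)$), then subtract overlaps using the multi-indices $\alpha$ that parametrize minimal cuts $U_\alpha$ severing each of the remaining $t-|W|$ (or $t-1$, or $t$) paths exactly once. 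The three summands of Case 2(c) correspond respectively to (i) partitions where some nonempty collection of paths is fully present (contracted picture), (ii) partitions using $\pi_{xx'}$ directly while disabling alternative paths, and (iii) partitions avoiding $\pi_{xx'}$ while also disabling alternative paths; summing over $u$ and $\alpha$ with the sign $(-1)^{u+1}$ absorbs the overcount.

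The main obstacle will be the bookkeeping in Case 2(c): showing that every partition in $\Pi(X,j,s,\mathcal{R})$ is counted exactly once across the three outer sums. Concretely, one must verify that the inclusion-exclusion over multi-indices $\alpha$ with $d(\alpha)=u(t-|W|)$ correctly inverts the overcount produced by partitions that simultaneously admit multiple alternative path representations, and that the contracted parameters $(X_W,\mathcal{R}_W,j_W,s-s_W)$ faithfully record the residual rank and residual atom budget once the paths in $W$ have been absorbed. I would check this by fixing a target partition $\pi\in\Pi(X,j,s,\mathcal{R})$, enumerating its generating sets $J\subseteq\mathcal{R}$ of size $s$, classifying them by which maximal subset of the $p_v$'s is entirely contained in $J$, and verifying combinatorially that the alternating contributions telescope to $1$. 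Once this is established, the three formulas fall out by matching the three disjoint sources of contribution (fully-included subsets, $\pi_{xx'}$-refined with alternative paths cut, and $\pi_{xx'}$-avoiding with alternative paths cut), and Cases 2(a) and 2(b) follow as specializations where the contraction term is absent or trivial.
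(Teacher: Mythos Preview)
Your proposal follows essentially the same route as the paper: invoke Corollary~\ref{CorollaryProposition4.4} to split $\Pi(X,j,s,\mathcal{R})$ into the $_{xx'}$ and $_{x|x'}$ pieces, dispose of Case~1 and Case~2(a) by the cut-edge argument, and handle 2(b)--2(c) via inclusion--exclusion over the minimal cuts $U_\alpha$ together with the path-contractions $X_W$. The one organizational point on which the paper differs from your plan is the bookkeeping in 2(c): rather than fixing $\pi$ and verifying that the alternating contributions telescope to~$1$, the paper first assigns to each $\pi\in\Pi(X,j,s,\mathcal{R})_{xx'}$ a \emph{canonical} generating set $J_\pi$ (the size-$s$ subset containing $\pi_{xx'}$ whose graph carries the greatest number of cycles through $x,x'$, with a tie-breaking rule on indices), and then lets $p(\pi)\subseteq\mathbf{t}$ record which paths $p_v$ lie entirely in $G_{J_\pi}$. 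This yields a \emph{disjoint} decomposition $\Pi(X,j,s,\mathcal{R})_{xx'}=\bigsqcup_{\emptyset\neq W\subseteq\mathbf{t}}H_W\ \sqcup\ C_e$, where $H_W=\{\pi:p(\pi)=W\}$ and $C_e=\{\pi:p(\pi)=\emptyset\}$; the latter is characterized by $|p_v-\mathcal{R}_\pi|\geq 2$ for every $v$, which is why the second summand involves contracting $e$ and interrupting each resulting cycle \emph{twice}. Inclusion--exclusion is then applied only \emph{within} each disjoint piece. Your global-telescope plan and the paper's canonical-representative device are two ways to justify the same formula; the paper's choice makes the three summands manifestly non-overlapping before any signs enter.
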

	\begin{proof}
		The join of $s$ atoms lies at most at the $s$th level of $\Pi(X)$. Therefore, $\Pi(X,j,s,\mathcal{R})=\emptyset$ for every $j>s$, and consequently, $\bm{\pi}(X, j,s,\mathcal{R})=0$.
		
		According to Corollary \ref{CorollaryProposition4.4},   	\begin{equation}
		|\Pi(X, j, s,\mathcal{R})| = |\Pi(X, j, s,\mathcal{R})_{xx}|+|\Pi(X,j, s,\mathcal{R})_{x|x}|.
		\end{equation}
		
		 Let us suppose that $j\leq s$. If $e$ is the only path connecting the elements $x$ and $x'$, then $e$ is a cut-edge and hence the sets $\Pi(X,j,s,\mathcal{R})_{xx'}$ and $\Pi(X,j-1,s-1,\mathcal{R})_{x|x'}$ are equipotent, and their common cardinality is the same as that of the set $\Pi(j-1,s-1,\mathcal{R}')$, namely $\bm{\pi}(j-1,s-1,\mathcal{R}')$. Indeed, if $\pi\in\Pi(j-1,s-1,\mathcal{R}')$, then $\pi\vee\pi_{xx'}\in \Pi(j,s,\mathcal{R})_{xx'}$. Moreover, if $\pi,\pi'\in\Pi(j-1,s-1,\mathcal{R}')$ and $\pi\neq\pi'$, then $\pi\vee\pi_{xx'}\neq\pi'\vee\pi_{xx'}$ given that $x$ and $x'$ belong to different connected components of $G_{\mathcal{R}'}$. On the other side, since for every $\pi\in\Pi(j,s,\mathcal{R})_{x|x'}$, $\pi_{xx'}$ fails to refine $\pi$, the atom $\pi_{xx'}$ is never part of the $s$ red atoms used to decompose $\pi$ as the join of $s$ red atoms. Hence, the cardinality of $\Pi(j,s,\mathcal{R})_{x|x'}$ is the same as that of $\Pi(j,s,\mathcal{R}')$. This proves 1.
		
		However, notice that in the case that there are other paths in $G_{\mathcal{R}}$ connecting $x$ and $x'$, if $s\leq l_1$,  with $s-1$ atoms it is not possible to complete a path other than $e$ connecting $x$ and $x'$. This means that $e$ is still a cut-edge in every subgraph of $G_{\mathcal{R}}$ corresponding to the chosen size-$s$ subset of red atoms containing $e$. In this case, the same equality holds, which proves 2.(a).
		
		To prove 2.(b), notice that if $s\geq l_1$, then, in addition to $e$,  there is at least another path connecting $x$ and $x'$. To ensure that cycles are not formed in the corresponding graph, we need to interrupt every path different from $e$ connecting $x$ and $x'$. For each  $t$-dimensional multi-index $\alpha$, let us consider $\Pi(X,j-1,j-1,\mathcal{R}'-U_{\alpha})$, where $U_{\alpha}$ is as defined above. Notice that $U_{\alpha}$ is a minimum cut that separates $x$ and $x'$. Thus, $\Pi(X,j,j,\mathcal{R})_{xx'}=\Pi(X,j-1,j-1,\mathcal{R}')_{x|x'}=\bigcup_{\alpha}\Pi(j-1,j-1,\mathcal{R}'-U_{\alpha})$. Indeed, if $\pi\in \Pi(X,j,j,\mathcal{R})_{xx'}$, then Proposition \ref{RecursiveFormulaAtoms} ensures that $\pi$ is the join of a size-$j$ subset $\hat{J}$ of $\mathcal{R}$ that contains $\pi_{xx'}$. Since $\pi$ has rank $j$, the corresponding graph $G_{\hat{J}}$ does not contain cycles, and therefore $e$ is a cutting edge of this email. Removing $e$ results into a partition $\hat{\pi}\in \Pi(X,j-1,j-1,\mathcal{R}')_{x|x'}$. Obviously, if  $\pi,\pi'\in \Pi(X,j,j,\mathcal{R})_{xx'}$ and $\pi\neq\pi'$, then the corresponding partitions $\hat{\pi}$ and $\hat{\pi'}$ are different. Notice also that the absence of cycles in $G_{\hat{J}}$ means that all paths connecting $x$ and $x'$ have been interrupted, and hence $\hat{\pi}\in\Pi(X,j-1,j-1,\mathcal{R}'-U_{\alpha})$, for some minimum cut $U_{\alpha}$. Analogously, we can conclude that $\Pi(X,j,j,\mathcal{R})_{x|x'}=\bigcup_{\alpha}\Pi(X,j,j,\mathcal{R}'-U_{\alpha})$.
		Statement 2.(b) now follows from the Inclusion-Exclusion principle to each of the union above.
		
		Note first that if \( \pi \in \Pi(X, j, s, \mathcal{R})_{xx'} \), with \( s > j \), then
		\[
		\pi = \bigvee J
		\]
		for some subset \( J \subseteq \mathcal{R} \) such that \( \pi_{xx'} \in J \) and \( |J| = s \).
		
		Let \( G_{\mathbf{t}} = (X, E_{\mathbf{t}}) \) be the graph defined as follows: an edge \( \hat{e} \in E_{\mathbf{t}} \) if and only if \( \hat{e} = e \) or there exists \( v \in \mathbf{t} \) such that \( \hat{e} \) is an edge of the path \( p_v \).
		
		Among all subsets \( J \subseteq \mathcal{R} \) satisfying \( \pi = \bigvee J \), \( |J| = s \), and \( \pi_{xx'} \in J \), let us denote by \( J_{\pi} \) the subset that contains the greatest number of cycles in \( G_{\mathbf{t}} \). If more than one such subset exists, we choose among them the one(s) containing cycles with the smallest index values.
		
		Let \( C_{xx'} \) be the set of all partitions \( \pi \in \Pi(X, j, s, \mathcal{R})_{xx'} \) such that the corresponding set \( J_{\pi} \) contains at least one cycle that passes through both \( x \) and \( x' \).
		
		For each \( \pi \in C_{xx'} \), define \( p(\pi) \subseteq \mathbf{t} \) as the set of indices of the paths that are common to both \( G_{J_{\pi}} \) and \( G_{\mathbf{t}} \). That is, \( p(\pi) \) records which indexed paths contribute edges to the subgraph induced by \( J_{\pi} \).
		
		For every nonempty subset \( W \subseteq \mathbf{t} \), define the set
		\[
		H_W := \{ \pi \in C_{xx'} \mid p(\pi) = W \}.
		\]
		Note that for all \( W, W' \subseteq \mathbf{t} \), with \( W \neq W' \), we have \( H_W \cap H_{W'} = \emptyset \), so these sets form a disjoint partition of \( C_{xx'} \).
		
		Therefore, we obtain the identity:
		\[
		|C_{xx'}| = \sum_{\emptyset \neq W \subseteq \mathbf{t}} |H_W|.
		\]
		
		To compute the cardinality of \( H_W \), note that in every partition \( \pi \in H_W \), all vertices in the union \( \bigcup_{w \in W} p_w \) lie in the same block. Therefore, \( H_W \) is in bijection with the set of partitions of \( X_W \) into \( n - j \) blocks such that the associated graph contains no cycle passing through the collapsed vertex \( x_W \).
		
		Let \( n_W = |X_W| \) denote the number of vertices in the graph \( G_{\mathcal{R}}^W \). The corresponding partitions lie at level \( n_W - (n - j) \) in the lattice \( \Pi(X_W) \).
		
		To interrupt each of the \( t - |W| \) cycles in \( G_{\mathcal{R}}^W \) that pass through \( x_W \), we proceed as described above: construct a minimal cut \( U_{\gamma} \) by removing exactly one edge from each cycle.
		
		Thus, by the Inclusion-Exclusion principle, we obtain:
		\[
		|H_W| = \sum_u \sum_{\substack{\gamma \\ d(\gamma) = u(t - |W|)}} (-1)^{u+1} \bm{\pi}(X_W, n_W - (n - j), s - s_W, \mathcal{R} - U_{\gamma}).
		\]
		Let \( C_e \) be the set of partitions \( \pi \in \Pi(X, j, s, \mathcal{R})_{xx'} \) for which the \emph{only} path in \( G_{J_{\pi}} \) connecting \( x \) and \( x' \) is the edge \( e \).
		
		We claim that \( \pi \in C_e \) if and only if for every \( v \in \mathbf{t} \),
		\[
		|p_v - \mathcal{R}_{\pi}| \geq 2,
		\]
		where \( \mathcal{R}_{\pi} := \mathcal{A}(\pi) \cap \mathcal{R} \) denotes the set of atoms used in the decomposition of \( \pi \) that belong to \( \mathcal{R} \).
		
		Indeed, suppose \( |p_v - \mathcal{R}_{\pi}| = 0 \) for some \( v \in \mathbf{t} \). Then \( p_v \subseteq J_{\pi} \), and so \( G_{J_{\pi}} \) contains a path from \( x \) to \( x' \) different from \( e \), contradicting the assumption that \( \pi \in C_e \).
		
		Similarly, if \( |p_v - \mathcal{R}_{\pi}| = 1 \) for some \( v \in \mathbf{t} \), then all but one edge of \( p_v \) are present in \( J_{\pi} \), and a redundant edge could be added to complete a cycle passing through both \( x \) and \( x' \), again contradicting \( \pi \in C_e \).
		
		However, if \( |p_v- \mathcal{R}_{\pi}| \geq 2 \) for all \( v \in \mathbf{t} \), then even though redundant edges might exist, completing a cycle would change the block structure of \( \pi \), yielding a different partition. Thus, such \( \pi \) must lie in \( C_e \).
		
		To compute the cardinality of \( C_e \), we proceed by compressing the edge \( e \) and analyzing the resulting graph \( G_{\mathcal{R}}^{\{e\}} \), where \( e \) is replaced by a single vertex \( x_e \). In this graph, we must interrupt every cycle passing through \( x_e \) \emph{twice} in order to prevent alternate connections between \( x \) and \( x' \).
		
		Let \( U_{\gamma} \) be a minimal set constructed by selecting exactly two edges from each such cycle in \( G_{\mathcal{R}}^{\{e\}} \). Then,
		\[
		C_e = \bigcup_{\gamma} \Pi(X_e, j - 1, s - 1, \mathcal{R}' - U_{\gamma}),
		\]
		where \( X_e \) is the vertex set of the compressed graph and \( \mathcal{R}' \) denotes the set of atoms corresponding to the edges in \( G_{\mathcal{R}}^{\{e\}} \).
		
		Applying the Inclusion-Exclusion principle, we obtain:
		\[
		|C_e| = \sum_u \sum_{\substack{\gamma \\ d(\gamma) = u(t - 1)}} (-1)^{u+1} \bm{\pi}(X_e, j - 1, s - 1, \mathcal{R}' - U_{\gamma}).
		\]
		Therefore, since \( \Pi(X, j, s, \mathcal{R})_{xx'} = C_{xx'} \sqcup C_e \), the computations above yield the total cardinality of \( \Pi(X, j, s, \mathcal{R})_{xx'} \).
		
		Finally, to compute the cardinality of \( \Pi(X, j, s, \mathcal{R}) \), the set of partitions in which \( x \) and \( x' \) lie in \emph{different blocks}, we must interrupt all paths connecting \( x \) and \( x' \).
		
		Let \( U_{\alpha} \) be a minimal cut as defined earlier, obtained by removing exactly one edge from each such path. Then, using the Inclusion-Exclusion principle again, we obtain:
		\[
		|\Pi(X, j, s, \mathcal{R})| = \sum_u \sum_{\substack{\alpha \\ d(\alpha) = ut}} (-1)^{u+1} \bm{\pi}(X, j, s, \mathcal{R}' - U_{\alpha}).
		\]
		This completes the proof.
	\end{proof}

	\section{Potential applications}
	
Among the properties of the function $\mathfrak{N}$ introduced in Section~4, Properties (ix) and (x) warrant special attention. In particular, valuations and norms have long been known to encode key structural features of lattices~\cite{Roman}. More recently, metrics defined on lattices have found widespread use in machine learning and pattern recognition. Applications include comparing DNA sequences in bioinformatics~\cite{biology,couthino}, monitoring network performance in communication systems~\cite{networks}, and developing classification schemes in the social sciences~\cite{Metrics}. Of particular interest is the increasing attention to measures for comparing partitions~\cite{meila3,meila4,ShortestPathJyrko}, driven in part by advances in solving the mean partition problem~\cite{LeclercPartitions,strehl-ghosh}. The function $\mathfrak{N}$ and its structural constraints suggest new avenues for metric construction, and the recursive formula for $\bm{\pi}(X,j,s,\mathcal{R})$ offers a novel tool whose utility in these domains remains to be fully explored. Connections between set partitions and integer partitions may also provide combinatorial insights with potential applications in enumerative theory~\cite{Brown,Butler}.

Another key application lies in solving the \emph{mean partition problem}, in which a profile of partitions $\{\pi_1, \pi_2, \ldots, \pi_m\}$ is given and one seeks a consensus partition $\pi^{\ast}$ that minimizes a distortion function
\[
\varphi(\pi) = \sum_{i=1}^m D(\pi, \pi_i),
\]
for a suitable distance \( D \) on partitions. Various pruning criteria have been proposed to reduce the search space. In particular, the formula $\bm{\pi}(X,j,s,\mathcal{R})$ becomes essential when using the ``undesired atoms” criterion, where atoms that do not refine any member of the profile are excluded from refining the consensus $\pi^{\ast}$ \cite{correareduction}.

Finally, a promising direction emerges in the context of graph-based chemistry \cite{topologicalchemistry}. Discrete mathematical models represent molecular structures as graphs, with vertices denoting atoms and edges representing chemical bonds. Once in graph form, molecular descriptors—such as atom-bond connectivity, geometric-arithmetic indices, and distance-based indices—serve to correlate structural and experimental properties. The function $\bm{\pi}(X,j,s,\mathcal{R})$ offers a new class of topological indices, capturing structural regularities and decomposition complexity, and may provide predictive power in modeling biological and physical characteristics of chemical compounds.

\section*{Conclusion}
In this work, we investigated atomic decompositions within geometric lattices isomorphic to the lattice of partition $\Pi(X)$ of finite set $X$. Our exploration of the function $\mathfrak{N}$ described in detail the behavior of minimal atomic decompositions in $\Pi(X)$. Furthermore, our analysis of decompositions restricted a distinguished subset of atoms (i.e., the red atoms) led to a novel recursive enumeration formula for structure-constrained atomic decompositions. 

These findings are not only theoretically significant but also point to a wide range of potential applications. In particular, Properties (ix) and (x) of the function $\mathfrak{N}$ suggest connections to valuation theory and  metrics on lattices. Such metrics have practical use in diverse areas such as machine learning, bioinformatics, social sciences, and network monitoring. The recursive formula for $\bm{\pi}(X,j,s,\mathcal{R})$ offers a new tool to explore structure-constrained decompositions in these contexts. It may prove especially valuable in solving the \emph{mean partition problem}, where constraints on refining atoms allow for targeted pruning of the solution space.

Additionally, the combinatorial structure of our formula and its connection to red atoms could support advances in graph-based chemistry. Here, $\bm{\pi}(X,j,s,\mathcal{R})$ may serve as a novel topological index reflecting molecular complexity, potentially aiding the prediction of biological and physical properties in chemical compounds.

Future work may extend these results to broader classes of lattices, refine their combinatorial interpretations, or examine their algorithmic potential in data-driven applications.

\noindent\textbf{Author Contribution: }Conceptualization, Jyrko Correa-Morris; methodology, Jyrko Correa-Morris and Elvis Cabrera; software, Alex Aguila; validation, Jyrko Correa-Morris, Elvis Cabrera and Alex Aguila; formal analysis, Jyrko Correa-Morris, Elvis Cabrera and Alex Aguila; investigation, Jyrko Correa-Morris, Elvis Cabrera and Alex Aguila; resources, Jyrko Correa-Morris, Elvis Cabrera and Alex Aguila; writing---original draft preparation, Jyrko Correa-Morris and Elvis Cabrera; writing---review and editing, Jyrko Correa-Morris and Elvis Cabrera; supervision, Jyrko Correa-Morris; project administration, Jyrko Correa-Morris; funding acquisition, Jyrko Correa-Morris. All authors have read and agreed to the published version of the manuscript.

\noindent\textbf{Funding: }This research was funded by the National Science Foundation (NSF) under the STEM-UR project, Award Number 2412543, Managing Division: DUE. The APC was funded by the same grant.

\noindent\textbf{Acknowledgments:}	The authors would like to thank the STEM-UR student research team for their valuable input and discussions. During the preparation of this manuscript, the authors used ChatGPT-4 (OpenAI, 2025) for the purposes of editing assistance and language refinement. The authors have reviewed and edited the output and take full responsibility for the content of this publication.


\begin{thebibliography}{999}
\bibitem{PCrawley1} Peter Crawley, Decomposition Theory for Nonsemimodular Lattices, \textit{T. Am. Math. Soc}, 99 (2) (May., 1961), pp. 246-254.
\bibitem{RPDilworth1} R. P. Dilworth and Peter Crawley, Decomposition Theory for Lattices Without Chain Conditions, \textit{T. Am. Math. Soc}, 96(1) (Jul., 1960), pp. 1-22.
\bibitem{MErne1} Marcel ErnÉ, On the Existence of Decompositions in Lattices. \textit{Algebra Univ.}, 16 (Dec.,1983), pp. 338–343.
\bibitem{RPDilworth2} R. P. Dilworth, Structure and Decomposition Theory of Lattices, \textit{The Dilworth Theorems} (1990).
\bibitem{ChVescovo1} Del Vescovo, Chiara, Parsia, Bijan, Sattler, Uli, Schneider, Thomas, The Modular Structure of an Ontology: Atomic Decomposition. \textit{ Proc. of IJCAI}, 11 (Jan., 2011), pp. 2232-2237.
\bibitem{MHorridge1} Horridge M., Mortensen J.M., Parsia B., Sattler U., Musen M.A., A Study on the Atomic Decomposition of Ontologies. \textit{Lecture Notes in Computer Science, Springer, Cham. The Semantic Web – ISWC}, 8797 (2014). 
\bibitem{YSMishura} Yu. S. Mishura, F. Weisz, Atomic Decompositions and Inequalities for Vector-Valued Discrete-Time Martingales,  \textit{Theory of Probability and Its Applications},  43 (3) (2006), pp. 487–496.
\bibitem{TAlbu1} Albu, T. Completely irreducible meet decompositions in lattices, with applications to Grothendieck categories and torsion theories (I). \textit{Bulletin Math\'{e}matique De La Soci\'{e}t\'{e} Des Sciences Math\'{e}matiques De Roumanie}, 52 (Nov., 2009), pp. 393-419.
\bibitem{XZhang1} Xueying Zhang, Chuanzhou Zhang,
Atomic decompositions of Banach lattice-valued martingales,
\textit{Statistics \& Probability Letters},
82 (Mar., 2012), pp. 664-671.
\bibitem{Atomicity1} Gotti, Felix, On the atomic structure of Puiseux monoids,  \textit{Journal of Algebra and Its Applications}, 16(7) (2017), 1750126.

\bibitem{Atomicity2} Gotti, F. and Gotti, M., Atomicity and boundedness of monotone Puiseux monoids, \textit{Semigroup Forum}  96: 536, 2018.

\bibitem{Atomicity3} Jim Coykendall and Felix Gotti, On the atomicity of monoid algebras,	\textit{Journal of Algebra},
539 (2019),
138-151.
\bibitem{Metrics} B. Monjardet,
Metrics on partially ordered sets: A survey, \textit{Discrete Mathematics},
35(1-3) (1981), pp. 173-184.

\bibitem{Lovasz} L\'aszlo Lov\'asz,   Jozsef  Pelik\'an, and   Katalin L Vesztergombi, Discrete mathematics: Elementary and beyond, 
Springer
(2003).

\bibitem{StanleyHA} Stanley, Richard. An introduction to hyperplane arrangements. \textit{Geometric Combinatorics}, 13 (2007), 389-496.

\bibitem{Roman} Steven Roman, Lattices and ordered sets, Springer
(2008).

\bibitem{Lewis} Richard P. Lewis,
The number of spanning trees of a complete multipartite graph, \textit{Discrete Mathematics},
197/198   (1999), pp. 537-541.

\bibitem{biology} Bao, J., Yuan, R., and Bao, Z., An improved alignment-free model for dna sequence similarity metric. \textit{BMC Bioinformatics} 15, 321 (2014).

\bibitem{couthino} Tarcisio Jos\'{e} Domingos Coutinho, Gl\'{o}ria Regina Franco and Francisco Pereira Lobo,
Homology-Independent Metrics for Comparative Genomics,
\textit{Computational and Structural Biotechnology Journal},
13 (2015), 352 - 35.

\bibitem{Ore} Oystein Ore, Theory of equivalence relations, \emph{Duke Math. Jour.}, 9 (1942), pp. 573–628.

\bibitem{Sasaki} Usa Sasaki, Shigeru Fujiwara, The Characterization of Partition Lattices, \emph{Hiroshima Mathematical Journal}, 15 (1951), pp. 189-201.

\bibitem{Stonesifer} J. Randolph Stonesifer, Kenneth P. Bogart, Characterization of partition lattices, \emph{Algebra Universalis}, 19 (1984), pp. 92-98.

\bibitem{Yoon} Young-Jin Yoon, Characterization of partition lattices, \emph{Bull. Korean Math. Soc.} 31, 2 (1994), pp. 237-242.

\bibitem{networks} C. Barz, M. Pilz and A. Wichmann, Temporal Routing Metrics for Networks with Advance Reservations, \textit{ 2008 Eighth IEEE International Symposium on Cluster Computing and the Grid (CCGRID), Lyon,} (2008) pp. 710-715.

\bibitem{meila3}
Marina	Meil\u{a}, Comparing clusterings---an information based distance, \textit{Journal of Multivariate Analysis}, 98(5) (2007), pp. 873--895.

\bibitem{meila4} Marina Meil\u{a}, Local equivalences of distances between clusterings - a	geometric perspective,
\textit{Machine Learning}, 86(3) (2012), pp. 369-389.

\bibitem{ShortestPathJyrko} Jyrko Correa-Morris, Comparing Partitions: Shortest Path Length Metrics
and Submodularity, \textit{International Journal of Mathematical Models and Methods in Applied Sciences}, 13 (2019), pp. 45-51.

\bibitem{LeclercPartitions} Jean-Pierre Barth\'{e}lemy and Bruno Leclerc, The Median Procedure for Partitions,
\textit{Partitioning Data Sets},
American Mathematics Society, Series in Discrete Math (1995), pp. 3-34.

\bibitem{strehl-ghosh} Strehl, Alexander and Ghosh, Joydeep, Cluster ensembles --- a knowledge reuse framework for combining multiple partitions, \textit{Journal of Machine Learning Research}, 3 (2002), pp. 583--617.
\bibitem{Brown} T.C. Brown, Wun-Seng Chou, Peter J.-S. Shiue, On the partition function of a finite set, \textit{Australasian Journal of Combinatorics} 27(2003), pp. 193-204.
\bibitem{Butler} Jon T. Butler, Tsutomu Sasao, A set partition number system, \textit{Australasian Journal of Combinatorics}, 65(2) (2016), pp. 152-169.

\bibitem{correareduction} Correa-Morris, Jyrko, Reduction of search space for the mean partition problem, \textit{WSEAS Transactions on Mathematics}, 22 (2023), pp. 736-745.	

\bibitem{topologicalchemistry} S. Zaman, W. Ahmed, A. Sakeena, et al., Mathematical modeling and topological graph description of dominating David derived networks based on edge partitions, \textit{Sci. Rep.} 13 (2023), 15159.
		
\end{thebibliography}
\end{document}